\begin{document}
\textwidth 5.5in
\textheight 8.3in
\evensidemargin .75in
\oddsidemargin.75in

\newtheorem{exm}{Examples}[section]
\newtheorem{lem}{Lemma}[section]
\newtheorem{conj}{Conjecture}[section]
\newtheorem{defi}{Definition}[section]
\newtheorem{thm}{Theorem}[section]
\newtheorem{cor}{Corollary}[section]
\newtheorem{lis}{List}[section]
\newtheorem{prob}{Problem}[section]
\newtheorem{rmk}{Remark}[section]
\newtheorem{que}{Question}[section]
\newtheorem{prop}{Proposition}[section]
\newtheorem{clm}{Claim}[section]
\newcommand{\p}[3]{\Phi_{p,#1}^{#2}(#3)}
\def\Z{\mathbb Z}
\def\R{\mathbb R}
\def\g{\overline{g}}
\def\odots{\reflectbox{\text{$\ddots$}}}
\newcommand{\tg}{\overline{g}}
\def\proof{{\bf Proof. }}
\def\ee{\epsilon_1'}
\def\ef{\epsilon_2'}
\title{The $E_8$-boundings of homology spheres and negative sphere classes in $E(1)$.}
\author{Motoo Tange}
\thanks{The author was partially supported by JSPS KAKENHI Grant Number 24840006}
\subjclass{57R55, 57R65}
\keywords{Definite spin 4-manifold, Brieskorn homology 3-sphere, minimal genus surface, $E_8$-plumbing}
\address{Institute of Mathematics, University of Tsukuba,
 1-1-1 Tennodai, Tsukuba, Ibaraki 305-8571, Japan}
\email{tange@math.tsukuba.ac.jp}
\date{\today}
\maketitle
\begin{abstract}
We define invariants $\frak{ds}$ and $\overline{\frak{ds}}$, which are the maximal and minimal second Betti number divided by $8$ among definite spin boundings of a homology sphere.
The similar invariants $g_8$ and $\overline{g_8}$ are defined by the maximal (or minimal) product sum of $E_8$-form of bounding 4-manifolds. 
We compute these invariants for some homology spheres.
We construct $E_8$-boundings for some of Brieskorn 3-spheres $\Sigma(2,3,12n+5)$ by handle decomposition.
As a by-product of the construction, some negative classes which consist of addition of several fiber classes plus one sectional
class in $E(1)$ are represented by spheres.
\end{abstract}
\section{Introduction}
\subsection{The spin and negative definite bounding.}
It is well-known that any 3-manifold $Y$ is the boundary of a spin 4-manifold $X$.
Furthermore, if we set some conditions of the intersection form of $X$, it becomes unclear whether there exists the bounding with those conditions.
The Rokhlin theorem says that homology sphere $Y$ with the Rokhlin invariant $\mu(Y)=1$ cannot bound any spin 4-manifold with $\sigma(X)\equiv 0\bmod 16$.

Let $X$ be a spin bounding of a homology sphere $Y$.
We can construct a new spin bounding increasing the one positive and negative eigenvalues of the intersection form
by taking connected-sum $X\#S^2\times S^2$.
In this paper we focus on the construction of spin boundings without positive or negative eigenvalues, i.e., $b_2(X)=|\sigma(X)|$.
Such boundings of homology spheres are called {\it negative- (or positive-) definite boundings}.

Ozsv\'ath and Szab\'o in \cite{OS} defined the integer-valued homology cobordism invariant $d(Y)$ 
for any homology sphere.
It is called {\it the correction term} or {\it $d$-invariant}.
By using this homology cobordism invariant they obtained the following:
\begin{thm}[\cite{OS}]
Let $Y$ be an integral homology 3-sphere.
Then any negative-definite bounding $X$ of $Y$ satisfies the inequality
\begin{equation}
\label{os}
\xi^2+\text{rk}(H^2(X;{\Bbb Z}))\le 4d(Y)
\end{equation}
for each characteristic vector $\xi\in H^2(X)$.
\end{thm}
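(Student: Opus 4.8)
The plan is to read the inequality off the functoriality of Heegaard Floer homology under $\mathrm{Spin}^c$ cobordisms together with its absolute grading conventions, following Ozsv\'ath--Szab\'o. First I would normalize the bounding: surgering embedded circles generating $H_1(X;\Z)$ we may assume $X$ is connected with $b_1(X)=0$ without changing $Y=\partial X$ or the negative-definite intersection form. Deleting an open $4$-ball from the interior then produces a smooth cobordism $X_0:=X\setminus B^4$ from $S^3$ to $Y$ with $H^2(X_0;\Z)\cong H^2(X;\Z)$ carrying the same negative-definite unimodular form, so that $\sigma(X_0)=-b_2(X)$ and (since $b_1(X_0)=b_3(X_0)=0$) $\chi(X_0)=b_2(X)$. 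Given a characteristic $\xi\in H^2(X)$, I fix $\mathfrak s\in\mathrm{Spin}^c(X_0)$ with $c_1(\mathfrak s)=\xi$; this exists because the form is unimodular, so every characteristic class is a first Chern class.

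Next I would recall the machinery. The pair $(X_0,\mathfrak s)$ induces maps $F^\infty_{X_0,\mathfrak s}\colon HF^\infty(S^3)\to HF^\infty(Y)$ and $F^+_{X_0,\mathfrak s}\colon HF^+(S^3)\to HF^+(Y)$ that intertwine the natural projection $\pi\colon HF^\infty\to HF^+$, i.e. $\pi_Y\circ F^\infty_{X_0,\mathfrak s}=F^+_{X_0,\mathfrak s}\circ\pi_{S^3}$, and that shift the absolute grading by
\[
\frac{c_1(\mathfrak s)^2-2\chi(X_0)-3\sigma(X_0)}{4}=\frac{\xi^2+b_2(X)}{4}.
\]
I would also use the standard computations: a homology sphere has the standard $HF^\infty\cong\Z[U,U^{-1}]$; $HF^+(S^3)\cong\mathcal T^+_{(0)}$ is supported in gradings $\ge0$ with $\pi_{S^3}$ surjective; and $HF^+(Y)\cong\mathcal T^+_{(d(Y))}\oplus HF_{\mathrm{red}}(Y)$, where $\mathrm{Im}\,\pi_Y=\mathcal T^+_{(d(Y))}$ is the tower whose bottom class lies in grading $d(Y)$.

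The one nonformal ingredient is that, because $b_2^+(X_0)=0$ and both boundary components have the standard $HF^\infty$, the map $F^\infty_{X_0,\mathfrak s}$ is an isomorphism (a theorem of \cite{OS}). Granting this, the conclusion is a diagram chase. Using that $\pi_{S^3}$ is surjective and the intertwining relation, one gets $\mathrm{Im}\,F^+_{X_0,\mathfrak s}=\pi_Y(\mathrm{Im}\,F^\infty_{X_0,\mathfrak s})$, and since $F^\infty_{X_0,\mathfrak s}$ is an isomorphism,
\[
\mathrm{Im}\,F^+_{X_0,\mathfrak s}=\pi_Y\bigl(HF^\infty(Y)\bigr)=\mathrm{Im}\,\pi_Y=\mathcal T^+_{(d(Y))}.
\]
Hence the bottom generator of $\mathcal T^+_{(d(Y))}$, which sits in grading $d(Y)$, equals $F^+_{X_0,\mathfrak s}(x)$ for some $x\in HF^+(S^3)$. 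Since $HF^+(S^3)$ vanishes in negative gradings, $0\le\deg x=d(Y)-\tfrac14\bigl(\xi^2+b_2(X)\bigr)$, which rearranges to $\xi^2+\text{rk}\,H^2(X;\Z)\le 4d(Y)$.

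I expect the isomorphism statement for $F^\infty_{X_0,\mathfrak s}$ to be the main obstacle: the reduction to $b_1=0$, the Euler-characteristic/signature bookkeeping, and the final chase are routine, whereas controlling $HF^\infty$ and its cobordism maps is not. The usual route is to factor the negative-definite cobordism -- up to the action of $1$- and $3$-handles, which are isomorphisms on $HF^\infty$, and up to blowing down -- through a sequence of blow-ups $\overline{\mathbb{CP}^2}$, each inducing an isomorphism on $HF^\infty$, while tracking how $c_1(\mathfrak s)$ distributes over the exceptional classes.
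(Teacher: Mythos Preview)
The paper does not prove this theorem; it is quoted from \cite{OS} as a black box and immediately specialized to the spin case to obtain inequality~(\ref{spinos}). There is therefore nothing in the paper to compare your argument against.

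That said, your outline is the standard Ozsv\'ath--Szab\'o proof and is correct: puncture $X$ to a cobordism $S^3\to Y$, use the grading-shift formula $\tfrac14(c_1^2-2\chi-3\sigma)=\tfrac14(\xi^2+b_2)$, invoke the theorem that $F^\infty_{X_0,\mathfrak s}$ is an isomorphism when $b_2^+=0$, and read off the inequality from the fact that $HF^+(S^3)$ lives in gradings $\ge 0$ while the tower in $HF^+(Y)$ bottoms out at $d(Y)$. Your identification of the $F^\infty$-isomorphism as the one nontrivial input is exactly right. One small caution on the normalization step: surgering a circle preserves the negative-definite form only when the circle has infinite order in $H_1(X;\Z)$ (otherwise you introduce a square-zero class); this is implicit in ``generating $H_1$'' but should be said. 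With that understood, the argument goes through.
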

Hence, the non-negativity of $d(Y)$ is a necessary condition to have a negative-definite bounding.

Furthermore, if $Y$ has a spin negative-definite bounding $X$, then the inequality (\ref{os}) implies
\begin{equation}
\label{spinos}
b_2(X)\le 4d(Y).
\end{equation}
Hence, the condition $d(Y)\ge 0$ is necessary condition for 
the integer homology 3-sphere to have a negative-definite bounding

We introduce another condition for spin negative-definite bounding.
Let $\overline{\mu}$ be the Neumann-Siebenmann invariant defined in \cite{Ne}.
In \cite{U1}, Ue shows the following:
\begin{thm}[\cite{U1}]
\label{Umain}
Suppose that a Seifert rational homology 3-sphere $Y$ with spin structure $c$ bounds a negative-definite 4-manifold $X$ with spin structure $c_X$.
Then 
$$b_2(X)\equiv -8\bar{\mu}(Y,c)\mod 16,$$
$$-\frac{8\bar{\mu}(Y,c)}{9}\le b_2(X)\le -8\bar{\mu}(Y,c).$$
\end{thm}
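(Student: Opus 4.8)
The plan is to treat the modular statement and the two-sided inequality separately, the former being elementary. The Neumann--Siebenmann invariant reduces modulo $2$ to the Rokhlin invariant $\mu(Y,c)\in\Z/2$, which by definition equals $\tfrac18\sigma(W')\bmod2$ for any spin $4$-manifold $W'$ with $\partial W'=(Y,c)$ (well defined because the difference of two such numbers is $\tfrac18\sigma$ of a closed spin $4$-manifold, hence even by Rokhlin). Applied to our bounding $X$ this gives $\tfrac18\sigma(X)\equiv\bar\mu(Y,c)\bmod2$, i.e.\ $\sigma(X)\equiv8\bar\mu(Y,c)\bmod16$; since $X$ is negative definite, $\sigma(X)=-b_2(X)$, and the congruence $b_2(X)\equiv-8\bar\mu(Y,c)\bmod16$ follows. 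In particular $\bar\mu(Y,c)\le0$, so the inequalities below are meaningful.

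For the inequalities I would use a $10/8$-type theorem for spin $V$-manifolds together with the fact — valid because $Y$ is Seifert — that $\bar\mu(Y,c)$ equals (minus) the Fukumoto--Furuta $w$-invariant $w(Y,c)$, defined through the $G$-index theorem for the Dirac operator on a pseudofree spin $V$-manifold bounding $Y$. Since $Y$ is a Seifert rational homology sphere it bounds a spin $V$-manifold $W$ whose singular set is finitely many cyclic-quotient points and whose spin structure restricts to $c$ (e.g.\ the orbifold disk bundle of the Seifert fibration, with the spin structure corrected; replace $Y$ by $-Y$ if necessary). Cap off to form the closed spin $V$-manifold $Z:=X\cup_Y(-W)$ and evaluate on $Z$ the two quantities entering Furuta's estimate: the orbifold signature $\sigma(Z)$ via the $G$-signature theorem, and $\operatorname{ind}_{\mathbb C}D_Z$ via the Kawasaki $V$-index theorem. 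The local contributions at the singular points are Dedekind-sum type expressions depending only on the Seifert data, and by the Neumann--Siebenmann plumbing formula together with the Fukumoto--Furuta computation they assemble, independently of $W$, into $\bar\mu(Y,c)$; this yields linear identities expressing $\operatorname{ind}_{\mathbb C}D_Z$ and $\sigma(Z)$ in terms of $b_2(X)=b_2^-(X)$ (recall $b_2^+(X)=0$), of $\bar\mu(Y,c)$, and of data intrinsic to $W$.

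Now apply the $\mathrm{Pin}(2)$-equivariant $10/8$ theorem in its $V$-manifold form (Furuta; Fukumoto--Furuta; Furuta--Kametani) to $Z$: when $\operatorname{ind}_{\mathbb C}D_Z\ne0$ it gives a lower bound for $b_2^+(Z)$ in terms of $|\operatorname{ind}_{\mathbb C}D_Z|$ minus local $b^+$-defects at the singular points. Choosing $W$ with controlled $b_2^{\pm}(W)$ and substituting the identities above, the inequality collapses to $b_2(X)\ge-\tfrac89\bar\mu(Y,c)$; the constant $9$ is precisely the slope bookkeeping of $10/8$ (for a closed spin $4$-manifold with $b^+>0$ one has $b^-\le9b^+-8$, and $9=\tfrac{10/8+1}{10/8-1}$). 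Running the same theorem for $-Z=(-X)\cup_YW$, in which $-X$ is positive definite, produces the matching upper bound $b_2(X)\le-8\bar\mu(Y,c)$ (one may also compare this with the Ozsv\'ath--Szab\'o bound (\ref{spinos})); the degenerate case $b_2^+(Z)=0$, where $Z$ is a closed negative-definite spin $V$-manifold, is handled separately by an orbifold diagonalization/Fr\o yshov-type argument. The main obstacle is the identification $w(Y,c)=-\bar\mu(Y,c)$ in the Seifert case — the technical heart of the matter, requiring an explicit $G$-index computation on the canonical resolution and a comparison of the resulting sum of Dedekind sums with the Neumann--Siebenmann plumbing formula — and, secondarily, choosing $W$ so that the orbifold corrections in the $10/8$ estimate are controlled and the final numerology reproduces exactly the factors $8$ and $\tfrac89$.
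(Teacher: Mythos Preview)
The paper does not prove this theorem at all: it is quoted verbatim as a result of Ue \cite{U1} and is used only as a black box (to bound $\frak{ds}$ in terms of $\bar\mu$, e.g.\ in property~(14) of Section~2 and in the proof of Proposition~\ref{list(1)}). So there is no ``paper's own proof'' to compare with.

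That said, your sketch is essentially the argument Ue gives in \cite{U1}, which in turn rests on \cite{Fk}. The congruence part is exactly as you wrote. For the inequalities, Ue also caps $X$ with a canonical spin orbifold $W$ bounding $(Y,c)$ coming from the Seifert fibration, applies the Fukumoto--Furuta $V$-manifold $10/8$ inequality to the closed spin orbifold $Z=X\cup_Y(-W)$ and to $-Z$, and then invokes the identification $w(Y,c)=-\bar\mu(Y,c)$ for Seifert rational homology spheres established in \cite{Fk}. Your identification of this last step as ``the technical heart of the matter'' is accurate: it is precisely the content of \cite{Fk}. One small point: your parenthetical suggestion that the upper bound might also be obtained from (\ref{spinos}) is misleading, since that inequality involves $d(Y)$ rather than $\bar\mu(Y,c)$, and the two are not known to agree for general Seifert rational homology spheres; Ue's upper bound genuinely comes from the second application of the $V$-manifold inequality, not from Heegaard Floer theory.
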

Hence, $\overline{\mu}(Y,c)\le 0$ is necessary condition for a Seifert spin rational homology 3-sphere
to have a spin negative-definite bounding.
On the other hands, the inequality does not guarantee the existence of such bounding $X$.

A topological space $X$ is said to be {\it homologically 1-connected}, if it is connected and $H_1(X)=\{0\}$.
The bounding genus $|\cdot|$ is defined as follows:
Thus, we define the following invariants.
\begin{defi}
Let $Y$ be a homology 3-sphere.
If $Y$ has a definite spin bounding, then we define $\epsilon(Y)$ as follows:
$$\epsilon(Y)=
\begin{cases}1&Y\text{ has a positive-definite spin bounding with }b_2(X)>0\\
-1&Y \text{has a negative-definite spin bounding with }b_2(X)>0\\
0&Y\text{ has a bounding with }b_2(X)=0.
\end{cases}$$
If $Y$ does not have any definite spin bounding, then we define $\epsilon(Y)=\infty$.
Here, the boundings are assumed all homologically 1-connected.
\end{defi}
The invariant $\epsilon$ is well-defined.
In fact, if a homology 3-sphere $Y$ has two boundings $X_1,X_2$ for two among $\{1,-1,0\}$,
then $X=X_1\cup(-X_2)$ is a definite spin closed 4-manifold with $b_2(X)>0$.
Donaldson's diagonalization theorem in \cite{D} does not allow the existence of that $X$.
\begin{defi}
Let $Y$ be a homology 3-sphere.
We define invariants $\frak{ds},\overline{\frak{ds}}$ on homology 3-spheres as follows:
$$\epsilon(Y)=\infty\Leftrightarrow\frak{ds}(Y)=\overline{\frak{ds}}(Y)=\infty.$$
and otherwise,
$$\frak{ds}(Y)=\max\left\{\frac{b_2(X)}{8}|\partial X=Y\text{, }b_2(X)=|\sigma(X)|,\ w_2(X)=0\right\}$$
$$\overline{\frak{ds}}(Y)=\min\left\{\frac{b_2(X)}{8}|\partial X=Y\text{, }b_2(X)=|\sigma(X)|,w_2(X)=0\right\}.$$
We assume the spin boundings are all homologically 1-connected.
\end{defi}
The rank of unimodular definte quadratic form with even type is divisible by $8$.
Even type means that the square for any element is even.
Thus, the values of these invariants are in ${\Bbb N}\cup\{0\}\cup\{\infty\}$.
By the defintion $0\le \overline{\frak{ds}}(Y)\le\frak{ds}(Y)$ holds.
We do not know whether there exists a homology 3-sphere with $\overline{\frak{ds}}(Y)\neq \frak{ds}(Y)$.

It is known that the difference is bounded in the following sense:
\begin{prop}
Let $Y$ be a homology 3-sphere with finite $\epsilon(Y)$ and $\overline{\frak{ds}}(Y)<\frak{ds}(Y)$.
Then $\frak{ds}(Y)-\overline{\frak{ds}}(Y)\le 8(\overline{\frak{ds}}(Y)+1)$ holds.
\end{prop}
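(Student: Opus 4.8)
The plan is to glue the two extremal spin boundings of $Y$ along $Y$ and feed the resulting closed spin $4$-manifold into Furuta's $\tfrac{10}{8}$ theorem. First I would pin down $\epsilon(Y)$. Since $\overline{\frak{ds}}(Y)<\frak{ds}(Y)$ and $\overline{\frak{ds}}(Y)\ge 0$, we get $\frak{ds}(Y)\ge 1$, so $Y$ admits a homologically $1$-connected definite spin bounding with $b_2>0$, which realises the value $+1$ or $-1$ of $\epsilon$. If $\epsilon(Y)$ were $0$, then $Y$ would also bound a homologically $1$-connected spin $4$-manifold with $b_2=0$, so $Y$ would carry boundings realising two of the three values $\{1,-1,0\}$, and gluing them along $Y$ would produce a closed definite spin $4$-manifold with $b_2>0$, impossible by Donaldson's theorem — this is exactly the argument that $\epsilon$ is well-defined. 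Hence $\epsilon(Y)\in\{1,-1\}$, and since $\frak{ds}$, $\overline{\frak{ds}}$ and $\epsilon$ are unchanged under orientation reversal of $Y$, I may assume $\epsilon(Y)=-1$. The same well-definedness reasoning then shows $Y$ has no homologically $1$-connected spin bounding with $b_2=0$ and no positive-definite one, so every homologically $1$-connected definite spin bounding of $Y$ is negative definite with $b_2>0$; in particular $\overline{\frak{ds}}(Y)\ge 1$.

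Next, choose homologically $1$-connected spin negative-definite boundings $X,X'$ of $Y$ with $b_2(X)=8\,\frak{ds}(Y)$ and $b_2(X')=8\,\overline{\frak{ds}}(Y)$; these exist since the relevant set of values is nonempty ($\epsilon(Y)$ is finite) and, by \eqref{spinos}, bounded above by $4d(Y)$, so the maximum is attained. Let $Z=X\cup_Y\overline{X'}$ be the closed oriented smooth $4$-manifold obtained by gluing $X$ and $-X'$ along $Y$ by the identity. As $Y$ is an integral homology sphere it carries a unique spin structure, to which the spin structures on $X$ and on $\overline{X'}$ both restrict, so they glue and $Z$ is spin. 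Because $H_\ast(Y)\cong H_\ast(S^3)$, the intersection form of $Z$ is $Q_X\oplus(-Q_{X'})$, so $Z$ is a closed smooth spin $4$-manifold with
\[
b_2^{+}(Z)=8\,\overline{\frak{ds}}(Y),\qquad b_2^{-}(Z)=8\,\frak{ds}(Y),\qquad \sigma(Z)=-8\bigl(\frak{ds}(Y)-\overline{\frak{ds}}(Y)\bigr)\neq 0 .
\]

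Finally, apply Furuta's $\tfrac{10}{8}$ theorem: a closed smooth spin $4$-manifold $Z$ with $\sigma(Z)\neq 0$ is indefinite (by Donaldson) with intersection form $2nE_8\oplus mH$, $n\neq 0$, and satisfies $m\ge 2\lvert n\rvert+1$, i.e.\ $b_2(Z)\ge \tfrac{10}{8}\lvert\sigma(Z)\rvert+2$. Substituting $b_2(Z)=8(\frak{ds}(Y)+\overline{\frak{ds}}(Y))$ and $\lvert\sigma(Z)\rvert=8(\frak{ds}(Y)-\overline{\frak{ds}}(Y))$ and simplifying yields $\frak{ds}(Y)\le 9\,\overline{\frak{ds}}(Y)-1$, hence
\[
\frak{ds}(Y)-\overline{\frak{ds}}(Y)\le 8\,\overline{\frak{ds}}(Y)-1<8\bigl(\overline{\frak{ds}}(Y)+1\bigr),
\]
which is slightly stronger than the asserted bound. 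Apart from the routine gluing bookkeeping the only substantial ingredient is Furuta's theorem; the point most in need of care — and the main obstacle — is verifying that the hypotheses force $\epsilon(Y)=\pm 1$, so that the two extremal boundings have matching definiteness and $Z$ is genuinely spin with nonzero signature.
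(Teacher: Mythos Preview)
Your argument is correct and follows the same strategy as the paper: glue the two extremal definite spin boundings along $Y$ and apply Furuta's $\tfrac{10}{8}$ inequality to the resulting closed spin $4$-manifold. You are more careful than the paper in justifying that $\epsilon(Y)\in\{1,-1\}$ (so both boundings share definiteness and have $b_2>0$) and that the maximum is attained, and your computation in fact recovers the sharper bound $\frak{ds}(Y)\le 9\,\overline{\frak{ds}}(Y)-1$ that Furuta's inequality actually yields; the paper records the weaker $\beta_2\le 9\beta_1+8$, which still suffices for the stated conclusion.
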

\begin{proof}
Let $X_1,X_2$ be two negative-definite spin boundings with $b_2(X_i)=\beta_i$ and $0<\beta_1<\beta_2$.
Then the invariants of the capped closed spin manifold $X=X_2\cup(-X_1)$
are $b_2(X)=\beta_1+\beta_2$ and $\sigma(X)=\beta_2-\beta_1$.
From Furuta's inequality in \cite{F}, we have $\beta_2\le 9\beta_1+8$.
$$\overline{\frak{ds}}(Y)<\frak{ds}(Y)\le 9\overline{\frak{ds}}(Y)+8$$
Consequently, $\frak{ds}(Y)-\overline{\frak{ds}}(Y)\le 8(\overline{\frak{ds}}(Y)+1)$ holds.
\qed
\end{proof}
On the other hands $\frak{ds}$ can be taken arbitrarily large.
The examples below will be computed later.

For positive integer $n$, the Brieskorn homology 3-spheres
$$\Sigma(4n-2,4n-1,8n-3),\ \Sigma(4n-1,4n,8n-1)$$
$$\Sigma(4n-2,4n-1,8n^2-4n+1),\ \Sigma(4n-1,4n,8n^2-1)$$
have $\frak{ds}=n$.
This will be proven later.

Suppose that a homology 3-sphere $Y$ has a bounding $X$ satisfying
$$\partial X=Y,\ Q_X=nE_8,$$
where $n$ is a negative integer, then $nE_8$ is a direct product of $(-n)$-copies of the negative-definite quadratic form with $E_8$-type.
Then, we call the spin bounding $X$ {\it $E_8$-bounding}.
If the bounding is positive-definite, we call the bounding {\it positive $E_8$-bounding},
and if the bounding is negative-definite, the bounding {\it negative $E_8$-bounding}.

\begin{defi}[$E_8$-genera]
Let $Y$ be a homology 3-sphere with finite $\epsilon(Y)$.
If $Y$ has an $E_8$-bounding, then we define the $E_8$-genera as follows:
$$g_8(Y)=\max\{|n||Y=\partial X\text{ and }, w_2(X)=0,Q_X=nE_8\}$$
$$\overline{g_8}(Y)=\min\{|n||Y=\partial X\text{ and }, w_2(X)=0,Q_X=nE_8\},$$

If $Y$ does not have any $E_8$-bounding, then we define $g_8(Y)$ to be
$$g_8(Y)=+\infty.$$
\end{defi}

When a homology 3-sphere $Y$ has finite $\epsilon(Y)$, 
it is not known whether $Y$ has an $E_8$-bounding or not.

We introduce other related invariants.
\begin{defi}
Let $Y$ be a homology 3-sphere.
Then the bounding genus $|Y|$ of $Y$ is defined to be
$$|Y|:=\begin{cases}\min\{n|\partial X=Y,Q_X=nH\}&\mu(Y)=0,\\\infty& \mu(Y)=1,\end{cases}$$
where the bounding 4-manifold $X$ is restricted to homologically 1-connected 4-manifold.
\end{defi}
This invariant is considered as an h-cobordism invariant 
$$|\cdot|:\Theta_{\Bbb Z}^3\to {\Bbb N}\cup\{0,\infty\}.$$

The $\xi$-invariant in \cite{M} is defined as:
$$\xi(Y)=\max\{p-q|p,q\in{\Bbb Z},q>0,p(-E_8)\oplus qH=Q_X\text{ and }w_2(X)=0,\partial X=Y\}.$$
Bohr and Lee's $m$ in \cite{BL} and $\overline{m}$ are defined as:
$$m(Y)=\max\left\{\frac{5}{4}\sigma(X)-b_2(X)|p,q,\in{\Bbb Z},\partial X=Y,\text{ and }w_2(X)=0\right\}$$
$$\overline{m}(Y)=\min\left\{\frac{5}{4}\sigma(X)-b_2(X)|p,q,\in{\Bbb Z},\partial X=Y,\text{ and }w_2(X)=0\right\}$$
Here, the relationship between $m$ and $\xi$ are as follows:
$$m(-Y)/2=\max\left\{\frac{b_2(N)}{8}-q|q\in{\Bbb Z},\partial X=Y,Q_X\cong N\oplus qH,w_2(X)=0\right.\\$$
$$\hfill\left.\text{ and }N:\text{even negative-definite form}\right\}.$$
Thus we have 
$$m(-Y)/2\le \xi(Y)+1,$$
as seen in \cite{M}.
Here the form $H$ is the quadratic form represented by $\begin{pmatrix}0&1\\1&0\end{pmatrix}$.

We state the $\frac{11}{8}$-conjecture by Y. Matsumoto and an equivalent conjecture in terms of $\frak{ds}$ and bounding genus.
\begin{conj}[Y. Matsumoto ($\frac{11}{8}$-conjecture)]
\label{118}
If $X$ is a closed oriented smooth 4-manifold and $Q_X$ is equivalent to $2k(-E_8)\oplus lH$,
then $l\ge 3|k|$ holds.
\end{conj}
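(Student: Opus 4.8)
The statement of Conjecture \ref{118} is the celebrated $\tfrac{11}{8}$-conjecture, which is at present open, so what follows is a program rather than a proof. Normalizing, if $Q_X\cong 2k(-E_8)\oplus lH$ with $k>0$ then $\sigma(X)=-16k$, $b_2(X)=16k+2l$ and $b^+(X)=l$, so the assertion $l\ge 3k$ is exactly $b_2(X)\ge\tfrac{11}{8}|\sigma(X)|$, with $K3$ (and more generally $\#^k K3$) the extremal case. The natural attack is gauge-theoretic: the finite-dimensional approximation of the Seiberg--Witten equations on a closed spin $4$-manifold produces a $\mathrm{Pin}(2)$-equivariant stable map between representation spheres whose quaternionic and real ``indices'' are controlled by $-\sigma(X)/16=k$ (from the Dirac operator) and by $b^+(X)=l$ (from $H^+$). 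Since $X$ is closed, the reducible solution forces this map to be non-trivial on the fixed-point spheres, and the arithmetic of equivariant maps of representation spheres then yields numerical inequalities between $k$ and $l$; Conjecture \ref{118} is the claim that the optimal such inequality has slope $\tfrac{11}{8}$.

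Concretely, I would first recall Furuta's $\tfrac{10}{8}$-theorem, already used above via \cite{F}: a $K$-theoretic analysis of the equivariant map (the complex representation ring of $\mathrm{Pin}(2)$ together with Adams operations) gives $b_2(X)\ge\tfrac{10}{8}|\sigma(X)|+2$, i.e. $l\ge 2k+1$. I would then try to sharpen the receiving invariant---replacing complex $K$-theory by $\mathrm{Pin}(2)$-equivariant $KO$-theory, by equivariant stable cohomotopy, or by $\mathit{tmf}$, and extracting finer order and divisibility data on the induced map of spheres; this is the route along which the additive constant has been raised (to $l\ge 2k+2$ in a range of cases by Hopkins--Lin--Shi--Xu, with conditional refinements of Furuta--Kametani, Stolz and Minami). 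In parallel I would exploit the equivalent reformulation stated next in the paper: a hypothetical counterexample, cut along a suitable embedded homology $3$-sphere $Y$, would produce definite spin boundings of $\pm Y$ violating the $\frak{ds}$- and bounding-genus estimates, so the explicit computations of $\frak{ds}$, $g_8$ and $|\cdot|$ carried out later constrain the shape of any counterexample and could feed an inductive argument on $k$.

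The main obstacle is structural: every one of these homotopy-theoretic refinements improves only the \emph{additive} constant $c$ in a bound of the form $b_2(X)\ge\tfrac{10}{8}|\sigma(X)|+c$, whereas Conjecture \ref{118} demands raising the \emph{slope} from $\tfrac{10}{8}$ to $\tfrac{11}{8}$, which is asymptotically stronger and does not follow from any finite additive gain. Closing that gap seems to require a genuinely new ingredient---a higher or secondary equivariant obstruction whose contribution grows with $|\sigma(X)|$, a families Seiberg--Witten argument, or a direct geometric input from the handle- and fiber-sum constructions (as with $E(1)$ and the $E_8$-plumbings) developed in this paper. For that reason I do not expect the program to yield a full proof; a realistic outcome is verification of Conjecture \ref{118} for small $k$ or under extra divisibility or $\mathrm{spin}^c$ hypotheses, together with the equivalences that translate it into the language of $\frak{ds}$ and the bounding genus.
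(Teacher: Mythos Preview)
Your proposal is appropriate: the statement is a \emph{conjecture}, and the paper does not prove it. The paper simply records Conjecture~\ref{118} as the open $\tfrac{11}{8}$-conjecture and then, in the proposition immediately following, shows it is equivalent to the reformulation in terms of $\frak{ds}$ and the bounding genus (Conjecture~\ref{equiv118}). You correctly flag the conjecture as open, survey the gauge-theoretic state of the art (Furuta's $\tfrac{10}{8}$-theorem and its refinements), and even anticipate the paper's actual contribution by noting the translation into the language of $\frak{ds}$ and $|\cdot|$. There is nothing further to compare, since the paper offers no proof of this statement.
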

\begin{conj}
\label{equiv118}
Suppose that $Y$ is a homology 3-sphere with $\mu(Y)=0$ and $\frak{ds}(Y)< \infty$.
Then the following is satisfied:
$$|Y|\ge \frac{3}{2}\frak{ds}(Y).$$
\end{conj}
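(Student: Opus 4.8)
The plan is to derive the inequality from Matsumoto's $\frac{11}{8}$-conjecture (Conjecture \ref{118}), which is stated above and which I may therefore invoke. Put $k=\frak{ds}(Y)$. I would first clear away the degenerate cases: if $\epsilon(Y)=0$ then $\frak{ds}(Y)=0$ and $|Y|\ge 0$ is automatic, while if $\epsilon(Y)=+1$ I would reverse all orientations, which replaces the positive-definite bounding by a negative-definite one and leaves both $|Y|$ and $\frak{ds}(Y)$ unchanged. Thus it suffices to treat $\epsilon(Y)=-1$, in which case there is a homologically $1$-connected negative-definite spin bounding $X_0$ realizing the maximum, so that $b_2(X_0)=8k$ and $\sigma(X_0)=-8k$. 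Because $\mu(Y)=0$ the bounding genus is finite, so $Y$ also bounds a homologically $1$-connected $X_1$ with $Q_{X_1}=|Y|\,H$.

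The heart of the argument is a capping-off construction. I would form the closed oriented spin $4$-manifold $Z=X_0\cup_Y(-X_1)$; the spin structures glue because the homology sphere $Y$ carries a unique one. Since $Y$ is an integral homology sphere, Mayer--Vietoris yields $H_2(Z)\cong H_2(X_0)\oplus H_2(-X_1)$ with the intersection form splitting orthogonally, and Novikov additivity gives $\sigma(Z)=\sigma(X_0)+\sigma(-X_1)=-8k$. Hence $Q_Z$ is even and unimodular with signature $-8k$ and rank $8k+2|Y|$. When $|Y|>0$ this form is indefinite, so the classification of indefinite even unimodular forms forces \[Q_Z\cong k(-E_8)\oplus |Y|\,H.\] If instead $|Y|=0$, then $Y$ bounds a homology $4$-ball; capping $X_0$ with it produces a closed definite spin manifold, and Donaldson's theorem (\cite{D}) then forces $b_2=0$, so $k=0$ and the inequality is trivial.

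Next I would fix the parity of $k$. The hypothesis $\mu(Y)=0$ says that every spin bounding of $Y$ has signature $\equiv 0\bmod 16$; applied to $X_0$ this gives $-8k\equiv 0\bmod 16$, so $k$ is even, say $k=2k'$. The form above becomes $Q_Z\cong 2k'(-E_8)\oplus|Y|\,H$, exactly the shape to which Conjecture \ref{118} applies. That conjecture then yields $|Y|\ge 3|k'|=\tfrac{3}{2}k=\tfrac{3}{2}\frak{ds}(Y)$, which is the claim.

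The genuine obstacle is that the final step rests entirely on the still-open $\frac{11}{8}$-conjecture. Without it, the strongest unconditional input is Furuta's $\frac{10}{8}$-inequality (\cite{F}): applied to $Z\cong k(-E_8)\oplus|Y|\,H$ it gives only $b_2(Z)\ge\tfrac{10}{8}|\sigma(Z)|+2$, which simplifies to $|Y|\ge\frak{ds}(Y)+1$. Improving the effective slope from $1$ to $\tfrac{3}{2}$ is precisely the content of Matsumoto's conjecture, so the inequality cannot presently be proved unconditionally; this is why it is recorded here as a conjecture rather than a theorem.
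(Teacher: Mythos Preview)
Your proposal is correct and follows essentially the same route as the paper: glue a definite spin bounding realizing $\frak{ds}(Y)$ to an $H$-bounding realizing $|Y|$, identify the resulting closed intersection form, and apply the $\tfrac{11}{8}$-conjecture. You are in fact more careful than the paper on several points---you explicitly invoke the classification of indefinite even unimodular forms to justify $Q_Z\cong k(-E_8)\oplus |Y|H$ (the paper simply asserts this), you treat the degenerate cases $\epsilon(Y)=0$ and $|Y|=0$ separately, and you spell out why $\mu(Y)=0$ forces $k$ even so that Conjecture~\ref{118} applies in its stated form. Your closing remark that the argument is conditional on the open $\tfrac{11}{8}$-conjecture is exactly right; the paper records this implication as one half of the equivalence between Conjectures~\ref{118} and~\ref{equiv118}.
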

\begin{prop}
Conjecture~\ref{118} and ~\ref{equiv118} are equivalent.
\end{prop}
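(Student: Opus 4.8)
The plan is to show that the two conjectures fail simultaneously: a counterexample to one yields a counterexample to the other. Throughout I will use three standard facts about closed $4$-manifolds: Rokhlin's theorem, that a closed spin $4$-manifold $Z$ has $\sigma(Z)\equiv0\bmod16$; the classification of indefinite unimodular forms, by which an indefinite even unimodular form is determined by its rank and signature and so is isometric to $p(-E_8)\oplus qH$ for the unique admissible $p\ge0$, $q\ge1$; and Donaldson's diagonalization theorem, to dispose of a degenerate case. The essential move in both directions is to cut and paste $4$-manifolds along homology $3$-spheres, where the vanishing of $H_1$ and $H_2$ makes Mayer--Vietoris split the intersection form as an orthogonal direct sum.

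First I would prove that Conjecture~\ref{118} implies Conjecture~\ref{equiv118}. Let $Y$ be a homology $3$-sphere with $\mu(Y)=0$ and $\frak{ds}(Y)<\infty$; the case $\frak{ds}(Y)=0$ is trivial, so assume $\frak{ds}(Y)>0$, whence $\epsilon(Y)=\pm1$. Since $|Y|$, $\frak{ds}(Y)$ and $\mu(Y)$ are all unchanged when the orientation of $Y$ is reversed, I may assume $\epsilon(Y)=-1$. Pick a negative-definite spin bounding $X_1$ of $Y$ realizing $\frak{ds}(Y)$, so $b_2(X_1)=8\frak{ds}(Y)$, and a homologically $1$-connected bounding $X_2$ with $Q_{X_2}=|Y|H$. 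As $Y$ is a homology sphere it carries a unique spin structure, so $Z:=X_1\cup_Y(-X_2)$ is a closed spin $4$-manifold with $Q_Z\cong Q_{X_1}\oplus|Y|H$. If $|Y|=0$ then $Z$ is a closed negative-definite spin $4$-manifold with $b_2(Z)=8\frak{ds}(Y)>0$, contradicting Donaldson's theorem; hence $|Y|\ge1$ and $Q_Z$ is indefinite. By Rokhlin, $\sigma(Z)=-8\frak{ds}(Y)\equiv0\bmod16$, so $\frak{ds}(Y)$ is even; write $\frak{ds}(Y)=2k$. Then $Q_Z$ is an indefinite even unimodular form of rank $16k+2|Y|$ and signature $-16k$, hence $Q_Z\cong 2k(-E_8)\oplus|Y|H$, and Conjecture~\ref{118} applied to $Z$ gives $|Y|\ge3k=\frac{3}{2}\frak{ds}(Y)$.

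Conversely, I would prove Conjecture~\ref{equiv118} implies Conjecture~\ref{118} by contraposition. Suppose a closed smooth oriented $4$-manifold $X$ has $Q_X\cong 2k(-E_8)\oplus lH$ with $l<3|k|$; as $l\ge0$, $k\ne0$, and after reversing orientation we may take $k\ge1$ and $0\le l\le3k-1$, and (after a preliminary surgery that does not change $Q_X$) that $X$ is simply connected. The aim is a homology $3$-sphere $Y$ with $\mu(Y)=0$, $\frak{ds}(Y)\ge2k$ and $|Y|\le l$: such a $Y$ bounds a definite spin manifold, hence has $\frak{ds}(Y)<\infty$, so it contradicts Conjecture~\ref{equiv118}. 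To build $Y$ I would split $X$ along an embedded homology $3$-sphere, $X=W\cup_Y V$, where $W$ is a negative-definite spin $4$-manifold with $b_2(W)=16k$ whose image in $H_2(X)$ is the $2k(-E_8)$-summand --- realized concretely as a boundary connected sum of $2k$ copies of the $E_8$-plumbing, so that $\partial W=\#^{2k}\Sigma(2,3,5)=:Y$ --- and $V$ is the closure of $X\setminus W$. Then $\mu(Y)=\sigma(W)/8\bmod2=-2k\equiv0$ and $\frak{ds}(Y)\ge b_2(W)/8=2k$; and since $Y$ is a homology sphere and $X$ is simply connected, Mayer--Vietoris identifies $Q_V$ with the orthogonal complement of $2k(-E_8)$ in $Q_X$, namely $lH$, with $V$ homologically $1$-connected. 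As $-V$ bounds $Y$ this gives $|Y|\le l$, the required contradiction.

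The step I expect to be the genuine obstacle is precisely this splitting of a putative counterexample. That $Q_X$ contains $2k(-E_8)$ as an orthogonal summand is automatic at the level of lattices, but realizing that summand by an embedded plumbing of $(-2)$-spheres on $2k$ copies of the $E_8$-graph --- equivalently, representing the relevant square-$(-2)$ classes by disjoint spheres in the $E_8$-pattern, so that a regular neighborhood has homology-sphere boundary --- is not. The route I would try is to normalize $X$ by handle calculus: a handle decomposition without $1$-handles in which the $16k$ $E_8$-type $2$-handles are attached first, with unimodular linking matrix, and are separated from an $l$-component Hopf-link family producing the $lH$-summand, so that the level just past the $E_8$-type handles is $Y=\#^{2k}\Sigma(2,3,5)$ and $W$, $V$ are its sublevel and superlevel. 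Establishing that a counterexample to the $\frac{11}{8}$-conjecture can always be arranged in this form --- or, failing that, locating the embedded $E_8$-configuration inside it directly --- is the delicate point; granting it, the remainder is the cut-and-paste bookkeeping carried out above.
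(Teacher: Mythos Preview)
Your argument follows the paper's route, and in the forward direction it is in fact more careful than the original: you make explicit the appeal to the classification of indefinite even unimodular forms and dispose of the edge case $|Y|=0$ via Donaldson, both of which the paper glosses over.

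In the reverse direction you have correctly located the one nontrivial step --- splitting a closed spin $4$-manifold $X$ with $Q_X\cong 2k(-E_8)\oplus lH$ along a homology $3$-sphere so that the pieces carry the two summands --- but you have made it harder than necessary. You ask for $W$ to be a boundary connected sum of $E_8$-plumbings, i.e.\ for the $(-2)$-classes to be represented by disjoint embedded \emph{spheres} in the $E_8$ pattern; that is genuinely delicate and may well fail in a hypothetical counterexample. All the argument needs, however, is \emph{some} homologically $1$-connected spin negative-definite $W$ with $b_2(W)=16k$ and homology-sphere boundary, together with a complement $V$ carrying $lH$; the particular homology sphere $Y=\partial W$ is irrelevant. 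This weaker statement is the $\Lambda$-splitting theorem of Freedman and Taylor (Topology \textbf{16} (1977), 181--184): any orthogonal decomposition of the intersection form of a closed simply-connected smooth $4$-manifold into unimodular summands is realized by a smooth decomposition $X=X_1\cup_Y X_2$ along a homology $3$-sphere, with each $X_i$ simply connected. Its proof goes by handle slides that block-diagonalize the linking matrix of the $2$-handles, not by producing sphere representatives. The paper simply asserts this splitting (``there exists a homology $3$-sphere $Y$ cutting the intersection form'') without proof or reference; once you replace your embedded-plumbing requirement by an invocation of Freedman--Taylor, your proof is complete and coincides with the paper's.
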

\begin{proof}
Suppose that $\frac{11}{8}$-conjecture holds.
Let $Y$ be a homology 3-sphere with $\mu(Y)=0$ and $\frak{ds}(Y)<\infty$.
Then there exist two bounding 4-manifolds $X_1,X_2$ satisfying $\partial X_1=Y$ and $\partial X_2=-Y$,
where $X_1$ is a definite spin 4-manifold and $Q_{X_2}\cong nH$.
Gluing $X_1$ and $X_2$ along $Y$ we get a closed 4-manifold with $Q_{X_1\cup X_2}\cong mE_8\oplus nH$.
Thus, $n\ge \frac{3|m|}{2}$ holds.
In particular, we may assume $n=|Y|$ and $m=\frak{ds}(Y)$.

Conversely, suppose that Conjecture~\ref{equiv118} holds.
Let $X$ be a closed 4-manifold with $Q_X=2k(-E_8)\oplus lH$.
Then there exists a homology 3-sphere $Y$ cutting the intersection form, i.e., $X=X_1\cup_YX_2$
and $Q_{X_1}\cong 2k(-E_8)$ and $Q_{X_2}\cong lH$ and $\partial X_2=Y$.
Thus $Y$ satisfies $\mu(Y)=0$ and $\frak{ds}(Y)<\infty$.
Hence, $l\ge |Y|\ge \frac{3}{2}\frak{ds}(Y)\ge \frac{3}{2}|2k|=3|k|$.
This implies $\frac{11}8$-conjecture.
\qed\end{proof}
\subsection{Examples of negative-definite spin boundings.}
The aim of this paper is to find negative-definite spin bounding or $E_8$-bounding for some types of 
Brieskorn homology 3-spheres $\Sigma(a_1,a_2,\cdots,a_n)$.
In this section we list the several results below which are proven later.
The $\frak{ds}$-invariant of all the examples are $0\le \frak{ds}<\infty$.

By the Milnor-fiber construction, we get the following:
\begin{thm}
\label{incmilnor}
For any integer $n$, we set $M_n=\Sigma(2,3,6n-1)\#(-\Sigma(2,3,6n-5))$
then $\epsilon(M_n)=-1$ and $g_8(M_n)=1.$
\end{thm}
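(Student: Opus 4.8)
The plan is to realize both $\Sigma(2,3,6n-1)$ and $\Sigma(2,3,6n-5)$ as boundaries of pieces of the Milnor fiber of $\Sigma(2,3,6n-1)$ and compare their intersection forms. Recall that the Milnor fiber $F(2,3,6n-1)$ of the singularity $x^2+y^3+z^{6n-1}=0$ is a smooth compact spin 4-manifold with $\partial F(2,3,6n-1)=\Sigma(2,3,6n-1)$, and (since $6n-1\equiv -1\bmod 6$, so the signature is a multiple of $-8$) it is in fact negative definite with $Q_{F}\cong -(n-1)E_8$ when reduced... more precisely one checks that the standard Milnor fiber of $\Sigma(2,3,6k\pm 1)$ has $b_2=10k$ or so with signature $-8k$ after a sequence of blow-downs, but the key fact I will use is the well-known relation between the Milnor fibers of $\Sigma(2,3,6n-1)$ and $\Sigma(2,3,6n-5)$: there is a cobordism $W$ from $\Sigma(2,3,6n-5)$ to $\Sigma(2,3,6n-1)$ obtained by attaching handles corresponding to the change in the third Brieskorn coefficient, and this $W$ is a negative-definite spin manifold with $Q_W\cong -E_8$ (intersection form of rank $8$, signature $-8$). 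Granting this, set $M_n=\Sigma(2,3,6n-1)\#(-\Sigma(2,3,6n-5))$; then $M_n$ bounds $X:=W\natural (-F')$ suitably, or more cleanly, $M_n=\partial(W\setminus (\text{collar of }\Sigma(2,3,6n-5)))$ after capping the $\Sigma(2,3,6n-5)$ end — but since $M_n$ is a connected sum rather than a single Brieskorn sphere, the cleanest construction is: $M_n$ bounds the boundary connected sum of the cobordism $W$ (viewed as bounding $\Sigma(2,3,6n-1)\sqcup -\Sigma(2,3,6n-5)$, hence after tubing the boundary components bounding the connected sum) with $Q=-E_8$. This gives an $E_8$-bounding of $M_n$ with $n=1$, so $\epsilon(M_n)=-1$ (the bounding is negative definite with $b_2=8>0$) and $g_8(M_n)\ge 1$.

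The remaining work is the upper bound $g_8(M_n)\le 1$, equivalently that $M_n$ admits no negative $kE_8$-bounding with $k\ge 2$. Here I would invoke the Ozsv\'ath–Szab\'o inequality \eqref{spinos}: a negative-definite spin bounding $X$ satisfies $b_2(X)\le 4d(M_n)$. Since $d$ is additive under connected sum and $d(-Y)=-d(Y)$, we have $d(M_n)=d(\Sigma(2,3,6n-1))-d(\Sigma(2,3,6n-5))$. The $d$-invariants of the Brieskorn spheres $\Sigma(2,3,6n\pm 1)$ are known (e.g. from the Ozsv\'ath–Szab\'o computations or the plumbing formula of N\'emethi): one gets $d(\Sigma(2,3,6n-1))=0$ and $d(\Sigma(2,3,6n-5))=-2$, or the analogous values, so that $d(M_n)=2$. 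Then \eqref{spinos} forces $b_2(X)\le 8$ for any negative-definite spin bounding, hence $k=1$ for any $E_8$-bounding, giving $g_8(M_n)=1$. One also needs $\epsilon(M_n)\ne 0,1,+\infty$: the value $-1$ is pinned down because we exhibited a negative-definite spin bounding with $b_2>0$, and well-definedness of $\epsilon$ (proved in the excerpt via Donaldson's theorem) rules out the other three possibilities.

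The main obstacle is the first step: constructing the explicit spin cobordism $W$ with $Q_W\cong -E_8$ between the two Milnor fibers and verifying its intersection form. Concretely I would either (a) use the Neumann plumbing description of $\Sigma(2,3,m)$ and track how the canonical negative-definite plumbed filling changes as $m$ increases by $4$, identifying the difference with an $E_8$-plumbing up to blow-ups/blow-downs that cancel, or (b) use a handle-diagram surgery description of the Brieskorn spheres (as the paper does later for $\Sigma(2,3,12n+5)$) and directly read off the cobordism. Checking that the resulting form is exactly $-E_8$ — not merely some rank-$8$ negative-definite even form, which is automatically $-E_8$ since $E_8$ is the unique such form — is then immediate, but getting the rank and signature right requires care with the plumbing calculus. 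The $d$-invariant values needed for the upper bound are standard and I would cite them rather than recompute.
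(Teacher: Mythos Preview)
Your approach is essentially the paper's: build a spin cobordism between the two Brieskorn spheres from the Milnor fibers, tube the boundary components to get a bounding of $M_n$, and cap $g_8$ from above via the Ozsv\'ath--Szab\'o inequality~(\ref{spinos}). Two points where the paper is sharper than your outline. First, the ``main obstacle'' you flag dissolves once you quote the classical intersection forms of these particular Milnor fibers: $Q_{M(2,3,6n-5)}\cong (n-1)(-E_8)\oplus 2(n-1)H$ and $Q_{M(2,3,6n-1)}\cong n(-E_8)\oplus 2(n-1)H$. The natural inclusion $M(2,3,6n-5)\hookrightarrow M(2,3,6n-1)$ (Milnor fibers nest when the exponents increase) then gives the cobordism directly, and its intersection form is the orthogonal complement, namely $-E_8$; no plumbing calculus or handle pictures are needed. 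Second, your specific $d$-values are off: in fact $d(\Sigma(2,3,6n-1))=2$ and $d(\Sigma(2,3,6n-5))=0$, not $0$ and $-2$. The difference is still $2$, so the bound $b_2\le 8$ survives, but these Brieskorn spheres have non-negative $d$.
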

The minimal resolution of Brieskorn singularities gives definite boundings for the homology 3-spheres.
We will classify all the minimal resolutions of Brieskorn singularities with boundings with $g_8=1$ and $\epsilon=-1$.
\begin{thm}
\label{resolutionofBrieskorn}
If the minimal resolution of Brieskorn singularity gives a bounding with $g_8=1$ and $\epsilon=-1$, then
the homology 3-sphere is one of $\Sigma(2,3,5)$, $\Sigma(3,4,7)$, $\Sigma(2,3,7,11)$, $\Sigma(2,3,7,23)$
or $\Sigma(3,4,7,43)$.
\end{thm}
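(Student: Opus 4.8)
The plan is to translate the hypothesis into a combinatorial condition on the resolution graph and then carry out a finite classification. Since a Brieskorn homology sphere $\Sigma(a_1,\dots,a_n)$ is Seifert fibered with pairwise coprime invariants $a_i$, its minimal (good) resolution $X$ is the plumbing on a star-shaped tree: a central vertex of weight $-b_0$ with $n$ legs, the $i$-th leg being the linear chain of weights $-b_{i,1},\dots,-b_{i,s_i}$ determined by the negative continued fraction $a_i/\beta_i=[b_{i,1},\dots,b_{i,s_i}]$ (all $b_{i,j}\ge 2$), where $\beta_i$ is the Seifert invariant and $-b_0+\sum_i\beta_i/a_i=-1/(a_1\cdots a_n)$. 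Then $b_2(X)=1+\sum_i s_i$ and $Q_X$ is negative-definite and unimodular (because $\partial X$ is a homology sphere). As $-E_8$ is the unique even negative-definite unimodular lattice of rank $8$, and a plumbing lattice is even precisely when every weight is even, the hypothesis ``$X$ is an $E_8$-bounding with $n=-1$'' (i.e. $Q_X\cong -E_8$, giving $\epsilon=-1$ and $g_8=1$) is equivalent to: the tree has exactly $8$ vertices and $b_0$ and all the $b_{i,j}$ are even. So it suffices to classify the Brieskorn homology spheres with such a resolution tree.

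The crucial reduction is a parity count. At most one $a_i$ is even, since the $a_i$ are pairwise coprime; and a one-line induction shows that an all-even continued fraction $[b_1,\dots,b_s]=a/\beta$ has $a$ even iff $s$ is odd. Hence the number of legs of odd length equals the number of even $a_i$, which is at most $1$; but the $8$ vertices split into one central vertex and $7$ leg-vertices, and $7$ is odd, so this number is also odd. Therefore exactly one leg has odd length and the remaining $n-1$ legs have even length $\ge2$, whence $7\ge 1+2(n-1)$; so $n=3$ with leg-length multiset $\{1,2,4\}$ or $\{2,2,3\}$, or $n=4$ with multiset $\{1,2,2,2\}$.

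To finish, in each of these three shapes I would solve the Diophantine equation $b_0=\sum_i\beta_i/a_i+1/(a_1\cdots a_n)$ after writing the legs in closed form: a length-$1$ leg is $[2c]$ ($a=2c$, $\beta=1$); a length-$2$ leg is $[p,q]$ ($a=pq-1$, $\beta=q$); a length-$3$ leg is $[p,q,r]$ ($a=pqr-p-r$, $\beta=qr-1$); a length-$4$ leg similarly — all entries even. Since each $\beta_i/a_i<1$ and the last term is tiny, $\sum_i\beta_i/a_i$ must sit just below the even integer $b_0$; using that a leg of length $\ge2$ has $\beta_i/a_i$ bounded away from $1$ — the largest possible values being $2/3,4/7,6/11,\dots$ for length $2$ and $3/4,4/5$ for lengths $3,4$, attained only by the all-$(-2)$ legs — one gets $b_0=2$ in all cases and, in the shapes with a length-$1$ leg, $2c\le 4$. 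With $b_0=2$ fixed, ordering the legs by the size of $\beta_i/a_i$ leaves only finitely many choices for the leading leg(s), and once those are chosen the contribution $\beta/a$ of the last leg is pinned to an explicit rational, which determines that leg (one then verifies negative-definiteness). Running this out yields: shape $\{1,2,4\}$ gives only the $E_8$-Dynkin graph with $\partial=\Sigma(2,3,5)$; shape $\{2,2,3\}$ gives only the graph with central $-2$ and legs $[2,2],[2,2,2],[2,4]$, with $\partial=\Sigma(3,4,7)$; shape $\{1,2,2,2\}$ gives exactly $\Sigma(2,3,7,11)$, $\Sigma(2,3,7,23)$, $\Sigma(3,4,7,43)$.

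The main obstacle is this final step: making the search provably finite — deriving inequalities sharp enough to pin $b_0=2$, bound $c$, and reduce to one or two leading legs in each shape — and then not slipping in the handful of resulting Diophantine computations. The parity reduction is the clean structural input that collapses an a priori unbounded problem to three bounded one-parameter families; what remains is careful but elementary bookkeeping.
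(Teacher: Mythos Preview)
Your approach is correct and genuinely different from the paper's, and in several respects cleaner.

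The paper proves the classification in two pieces. For $n=3$ it rules out the leg-length partitions $(1,1,5)$ and $(1,3,3)$ by a direct unimodularity check, handles $(2,2,3)$ by brute-force monotonicity of the determinant in each variable, and for $(1,2,4)$ simply cites the Matsumoto--Yamada classification of even minimal resolutions of $\Sigma(2,q,r)$. For $n\ge 4$ it lists all partitions of $7$ into at least four parts, checks that the determinant is even in every case except $(2,2,2,1)$, and then grinds through that case via a sequence of five lemmas, each bounding one parameter by computing partial derivatives of the determinant. Your parity observation --- that an all-even negative continued fraction $[b_1,\dots,b_s]$ has even numerator iff $s$ is odd, combined with the pairwise coprimality of the $a_i$ and the fact that the total leg length $7$ is odd --- replaces both elimination steps by a single uniform argument and also dispenses with the external citation in the $(1,2,4)$ case. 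For the final Diophantine step, your plan to use the Seifert relation $b_0=\sum_i\beta_i/a_i+1/\prod a_i$ together with the sharp bounds $\beta/a\le s/(s{+}1)$ for an all-even length-$s$ leg (equality only at the all-$(-2)$ leg) is more conceptual than the paper's determinant calculus; in the $\{1,2,4\}$ shape it immediately forces the $E_8$ Dynkin graph by equality, and in the other two shapes it pins $b_0=2$ and leaves a short finite search, exactly as you outline.

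One small correction and one omission. First, your parenthetical ``giving $\epsilon=-1$ and $g_8=1$'' overstates what $Q_X\cong -E_8$ alone buys: it gives $\epsilon=-1$ and exhibits an $E_8$-bounding with $|n|=1$, hence $\overline{g_8}\le 1$, but not $g_8=1$ (which is a maximum over all $E_8$-boundings). Second, the paper closes precisely this gap at the end of its proof by computing that $d=2$ for all five spheres (via N\'emethi's algorithm), so that $\frak{ds}\le 1$ by inequality~(\ref{spinos}) and hence $g_8=1$; you should append this verification, or equivalently invoke Theorem~\ref{Umain} after checking $\bar\mu=-1$ for each of the five.
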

We give some examples of minimal resolution of the Brieskorn singularity with large $\frak{ds}$:
\begin{thm}
\label{examnu}
For any integer $n$, we have
$$\frak{ds}(\Sigma(4n-2,4n-1,8n-3))=\frak{ds}(\Sigma(4n-1,4n,8n-1))=n$$
$$\frak{ds}(\Sigma(4n-2,4n-1,8n^2-4n+1))=\frak{ds}(\Sigma(4n-1,4n,8n^2-1))=n.$$
\end{thm}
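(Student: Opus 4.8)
The plan is to establish each equality by proving the two inequalities $\frak{ds}(\Sigma) \ge n$ and $\frak{ds}(\Sigma) \le n$ separately. For the lower bound, the natural source of a definite spin bounding with $b_2 = 8n$ is a plumbing or minimal-resolution construction: each of these Brieskorn spheres $\Sigma(p,q,r)$ is the link of a quasihomogeneous surface singularity, so the minimal good resolution furnishes a negative-definite 4-manifold $X$ bounding $\Sigma$. First I would write down the resolution graph explicitly for each of the four families (they are star-shaped graphs with three legs determined by continued-fraction expansions of $r/p$, $r/q$, etc.), compute $b_2(X)$ and check that the intersection form is even — equivalently that $\Sigma$ has $\bar\mu = 0$ or that the canonical class vanishes mod $2$ — which forces $b_2(X)$ to be a multiple of $8$; the arithmetic of the four families should be arranged so that $b_2(X) = 8n$ in each case. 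This gives $\frak{ds}(\Sigma) \ge n$.

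For the upper bound I would invoke Theorem~\ref{Umain} (Ue's inequality) together with the Ozsv\'ath--Szab\'o bound~(\ref{spinos}). Since each $\Sigma$ is a Seifert homology sphere, any spin negative-definite bounding $X$ satisfies $b_2(X) \le -8\bar\mu(\Sigma)$ and $b_2(X) \le 4d(\Sigma)$; if $-\bar\mu(\Sigma) = n$ (and similarly $d(\Sigma)$ is small enough), then $b_2(X)/8 \le n$, giving $\frak{ds}(\Sigma) \le n$. So the upper bound reduces to a computation of the Neumann--Siebenmann invariant $\bar\mu$ (and, as a consistency check, the $d$-invariant) of each of the four families, which can be done from the Seifert data by the standard formula for $\bar\mu$ in terms of the Dedekind-sum-like expression, or recursively via the surgery description $\Sigma(2,3,6n\pm\cdots)$-type recursions. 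Combining the two bounds yields $\frak{ds}(\Sigma) = n$ for each family. Finally, one must check $\epsilon(\Sigma)$ is finite (so $\frak{ds}$ is not $\infty$) and the sign is $-1$, i.e. these spheres genuinely bound negative-definite (not positive-definite) spin manifolds — this follows from $d(\Sigma) \ge 0$ together with the orientation of the resolution.

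I expect the main obstacle to be matching the resolution data with the target value $8n$ on the nose and simultaneously verifying evenness for all four families uniformly: the continued-fraction expansions of quantities like $(8n^2-4n+1)/(4n-2)$ are not completely transparent, and one has to show the resulting plumbing graph carries an even form of rank exactly $8n$. A clean way around the case-by-case computation would be to recognize these plumbings as (possibly blown-down or modified) copies of the $E_8$-plumbing — indeed the paper's title suggests $E_8$-boundings are the organizing theme — in which case $Q_X = nE_8$ directly and both evenness and the rank $8n$ are immediate, reducing the remaining work purely to the $\bar\mu$ computation for the upper bound. The secondary difficulty is ensuring the boundings are homologically $1$-connected, which for star-shaped plumbings of homology spheres is automatic but should be noted.
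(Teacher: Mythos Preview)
Your approach is essentially the same as the paper's: exhibit the minimal resolution graph (even, negative-definite, rank $8n$) for the lower bound, and appeal to Ue's inequality (Theorem~\ref{Umain}) for the upper bound. The paper's proof is in fact terser than your proposal --- it simply draws the four resolution graphs (all weights are $-2$ except one $-2n$ on each graph, with branch lengths summing to $8n-1$) and leaves the upper bound implicit.

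Two remarks. First, your hoped-for ``clean way around'' does not materialize: the paper notes immediately after the proof that the intersection forms of these resolutions are \emph{not} isomorphic to $n(-E_8)$ for $n>1$, so you cannot sidestep the rank computation that way. Second, the $\bar\mu$ computation is much easier than you suggest --- no Dedekind sums are needed. Since the minimal resolution $W$ already has an even intersection form, the spherical Wu class vanishes and $\bar\mu(\Sigma) = \sigma(W)/8 = -b_2(W)/8 = -n$ directly; Ue's bound then gives $\frak{ds}(\Sigma) \le -\bar\mu(\Sigma) = n$ on the nose, and your worry about matching bounds ``uniformly'' disappears. The continued-fraction computations for the branch lengths are indeed the honest work, but they are elementary (for instance $(8n-3)/(4n-2) = [2;\underbrace{2,\ldots,2}_{4n-3}]$ and similarly for the others).
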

In the last section we will post a question related to $\frac{11}{8}$-conjecture and the bounding genus.

Even if the minimal resolution of a Brieskorn singularity does not give a spin 4-manifold,
in some cases the additional blow-downs of the 4-manifold can give a spin manifold.

Let $(G,a,b,c)$ be a 1-cycled weighted graph $G$ in the left of Figure~\ref{Explumb}.
The labels on two edges on $G$ are given by 3 integers labeled by $a,b$ with $\gcd(a,b)=1$ as drawn in the figure
and the other (unlabeled) edges are labeled by $1$.
The weight on the vertex intersected by the two edges with $a$ and $b$ is $-2c$ and
the other (unweighted) vertices are weighted by $-2$.
Such a graph can give a smooth 4-manifold with a boundary.
The handle diagram of the manifold is drawn in Figure~\ref{Explumb}.
The component weighted by $-2c$ is the $(a,b)$-torus knot.
\begin{thm}
\label{examples}
The quadruple $(G;a,b,c)$ in Table~\ref{compconfi} with $\gcd(a,b)=1$
gives a Brieskorn homology 3-sphere $\Sigma$ with $g_8(\Sigma)=1$ and $\epsilon(\Sigma)=-1$.

In the case of $((1);1,b,c)$, for some non-negative integer $m$ the homology 3-spheres $\Sigma(p,q,r)$ with the pairs $p,q,r$
in Table~\ref{BdofNegDef} have boundings with $g_8=1$ and $\epsilon=-1$.
\begin{table}
$\begin{array}{|c|c|c|c|}\hline
G&a&b&c\\\hline
(1)&3k-2\ell\pm2&-2k+3\ell\mp2&3k^2-4k\ell+3\ell^2\pm2(2k-2\ell)+2\\\hline
(2)&4k-\ell\pm2&-3k+2\ell\mp2&6k^2-3k\ell+\ell^2\pm2(3k-\ell)+2\\\hline
(3)&4k-3\ell\pm2&-3k+4\ell\mp2&6k^2-9k\ell+6\ell^2\pm2(3k-3\ell)+2\\\hline
(4)&5k-2\ell\pm2&-4k+3\ell\mp2&10k^2-8k\ell+3\ell^2\pm2(4k-2\ell)+2\\\hline
(5)&6k-\ell\pm2&-5k+2\ell\mp2&15k^2-5k\ell+\ell^2\pm2(5k-\ell)+2\\\hline
(6)&12k-4\ell\pm3&-10k+6\ell\mp3&60k^2-40k\ell+12\ell^2\pm6(5k-2\ell)+4\\\hline
(6)&12k-4\ell\pm5&-10k+6\ell\mp5&60k^2-40k\ell+12\ell^2\pm10(5k-2\ell)+11\\\hline
(6)&12k-4\ell\pm1&-10k+6\ell&60k^2-40k\ell+12\ell^2\pm10k+1\\\hline
(6)&12k-4\ell\pm3&-10k+6\ell\mp2&60k^2-40k\ell+12\ell^2\pm2(15k-4\ell)+4\\\hline
(7)&14k-2\ell\pm3&-12k+4\ell\mp3&84k^2-24k\ell+4\ell^2\pm6(6k-\ell)+4\\\hline
(7)&14k-2\ell\pm5&-12k+4\ell\mp5&84k^2-24k\ell+4\ell^2\pm10(6k-\ell)+11\\\hline
(7)&14k-2\ell\pm2&-12k+4\ell\mp1&84k^2-24k\ell+4\ell^2\pm12(2k-\ell)+2\\\hline
(7)&14k-2\ell\pm4&-12k+4\ell\mp3&84k^2-24k\ell+4\ell^2\pm6(8k-\ell)+7\\\hline
\end{array}$
\caption{The negative-definite $E_8$-boundings for $(G;a,b,c)$ in {\sc Figure}~\ref{12possible}}
\label{compconfi}
\end{table}

\begin{table}
$\begin{array}{|c|c|c|}\hline
p&q&r\\\hline
10i+7&15i+8&120i^2+148i+45\\\hline
10i+3&15i+2&120i^2+52i+5\\\hline
20i-8&30i-17&480i^2-464i+109\\\hline
20i+8&30i+7&480i^2+304i+45\\\hline
30i-13&45i-27&1080i^2-1116i+281\\\hline
30i-7&45i-18&1080i^2-684i+101\\\hline
30i+7&45i+3&1080i^2+324i+17\\\hline
30i+13&45i+12&1080i^2+756i+125\\\hline
20i+2&30i-7&480i^2-64i-11\\\hline
20i-2&30i-23&480i^2-256i+21\\\hline
10i+7&15i-2&120i^2+68i-365\\\hline
10i+13&15i+7&120i^2+212i+73\\\hline
60i-28&90i-57&4320i^2-4752i+1277\\\hline
60i-8&90i-27&4320i^2-1872i+173\\\hline
60i+8&90i-3&4320i^2+432i-19\\\hline
60i+28&90i+27&4320i^2+3312i+605\\\hline
\end{array}$
\caption{Brieskorn homology 3-spheres from the blow-downs of the minimal resolution of negative-definite plumbings.}
\label{BdofNegDef}
\end{table}
\begin{figure}
\begin{center}
\input{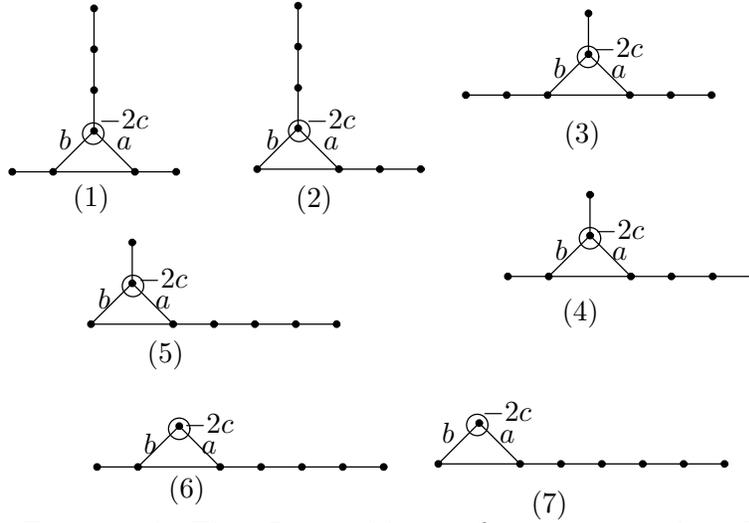}
\caption{The 7 possible configurations with $-E_8$-intersection form.
All the unweighted components are $-2$ and all the labels with unlabeled is $+1$.}
\label{12possible}
\end{center}
\end{figure}
Hence, any Brieskorn 3-sphere above satisfies $g_8(\Sigma)=1$.
\end{thm}
These $E_8$-boundings are constructed by blow-downs of minimal, negative-definite resolutions of Brieskorn singularities.
\subsection{Other examples}
Let $Y_n^-$ denote $\Sigma(2,3,6n-1)$.
Then the Neumann-Siebenmann invariant $\bar{\mu}$ is computed as follows:
\begin{equation}
\label{Ynmu}
\bar{\mu}(Y_n^-)=\begin{cases}-1&n\equiv 1\bmod 2\\0&n\equiv 0\bmod 2.\end{cases}
\end{equation}
As a corollary, if $\frak{ds}(Y_n^-)<\infty$, then $g_8(Y_{2k+1}^-)=1$ and $g_8(Y_{2k}^-)=0$ hold.
In this paper we show the existence of negative-definte spin boundings of $Y_{2k+1}^-$ for some of $k$.
\begin{thm}
\label{mainintro}
For $0\le k\le 12$ or $k=14$, we have $\frak{ds}(Y_{2k+1}^-)<\infty$.
In particular, for these integers $k$ we have $g_8(Y_{2k+1}^-)=1$.
\end{thm}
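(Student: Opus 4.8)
By \eqref{Ynmu} we have $\bar\mu(Y_{2k+1}^-)=-1$, and $Y_{2k+1}^-=\Sigma(2,3,6(2k+1)-1)=\Sigma(2,3,12k+5)$. By the corollary to \eqref{Ynmu} quoted before the statement, once we know $\frak{ds}(Y_{2k+1}^-)<\infty$ we get $g_8(Y_{2k+1}^-)=1$ for free; indeed Theorem~\ref{Umain} applied to any definite spin bounding $X$ forces $b_2(X)\equiv 8\bmod 16$ and $8/9\le b_2(X)\le 8$, hence $b_2(X)=8$, and (the boundary being a homology sphere) $Q_X$ is even, unimodular and definite of rank $8$, so $Q_X\cong \pm E_8$; Theorem~\ref{Umain} applied to $-Y_{2k+1}^-$, where $\bar\mu=+1$, rules out the $+E_8$ case. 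So it suffices to produce, for each $k\in\{0,\dots,12\}\cup\{14\}$, one homologically $1$-connected spin $4$-manifold $X_k$ with $\partial X_k=\Sigma(2,3,12k+5)$ and $b_2(X_k)=|\sigma(X_k)|$.

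\textbf{The construction.} I build $X_k$ as a complement inside the rational elliptic surface $E(1)=\mathbb{CP}^2\#9\overline{\mathbb{CP}^2}$. Write $f$ for the fiber class, $s$ for the class of a section ($f^2=0$, $s^2=-1$, $f\cdot s=1$, $K_{E(1)}=-f$), and let $F\subset E(1)$ be a cusp fiber. The crux is to realize the class $\beta_k:=kf-s$ — a sum of $k$ fiber classes plus one sectional class, of square $\beta_k^2=-(2k+1)$ — by a smoothly embedded sphere $S_k$ meeting $F$ transversally in one point. Granting this, let $N_k$ be a regular neighborhood of $F\cup S_k$, i.e.\ the plumbing of the Euler number $0$ and Euler number $-(2k+1)$ disk bundles over $S^2$ along one point, and set $X_k:=E(1)\setminus\operatorname{int}N_k$, oriented so that $\partial X_k=\Sigma(2,3,12k+5)$. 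A slam‑dunk in the handle picture of $N_k$ (a $0$‑framed trefoil with a $(-(2k+1))$‑framed meridian) identifies $\partial N_k$ with $S^3_{\pm1/(2k+1)}$ of a trefoil, i.e.\ $\Sigma(2,3,6(2k+1)-1)=\Sigma(2,3,12k+5)$ up to orientation. By Novikov additivity $|\sigma(X_k)|=|\sigma(E(1))|=8$ and $b_2(X_k)=10-b_2(N_k)=8$; since $\Sigma(2,3,12k+5)$ is a homology sphere with perfect fundamental group, Mayer--Vietoris and van Kampen give $H_1(X_k)=0$ and an orthogonal splitting $H_2(E(1))=\langle f,s\rangle\oplus H_2(X_k)$, so $H_2(X_k)\cong\langle f,s\rangle^{\perp}$; as $\langle f,s\rangle$ is unimodular of signature $0$, this complement is unimodular, even and negative definite of rank $8$, hence $\cong -E_8$. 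Thus $X_k$ is a spin negative‑definite bounding with $Q_{X_k}\cong -E_8$, and $\frak{ds}(Y_{2k+1}^-)=\overline{g_8}(Y_{2k+1}^-)=g_8(Y_{2k+1}^-)=1$.

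\textbf{Representing $\beta_k$ by a sphere.} In blow‑up coordinates $f=3h-e_1-\cdots-e_9$, $s=e_9$, so $\beta_k=kf-s$ is (up to sign) the strict transform of a plane curve of degree $3k$ with prescribed multiplicities at the nine base points, with arithmetic genus $1-k\le 0$ for $k\ge1$. Because $\beta_k^2<0$ there is no adjunction‑type obstruction to representing $\beta_k$ by an embedded sphere; the point is to do it explicitly. The plan is to carry out the $E_8$‑bounding "by handle decomposition": starting from a Kirby diagram of $\Sigma(2,3,12k+5)$ (e.g.\ $\pm1/(2k+1)$‑surgery on a trefoil), a sequence of handle slides and blow‑downs produces the handlebody $X_k$ with its $-E_8$ form, and reading the diagram the other way exhibits $S_k$ as an unknotted $2$‑sphere in class $\beta_k$ inside $E(1)$ (this is the "by‑product" in the abstract). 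For $k=0$ one takes $S_0$ to be an exceptional sphere, recovering the classical $E_8$‑plumbing bounding of $\Sigma(2,3,5)$; the cases $k\ge1$ require genuine manipulation.

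\textbf{Main obstacle.} The real difficulty — and the reason the statement is limited to $k\le12$ and $k=14$ — is that these handle manipulations do not evidently organise into a single family in $k$: each value demands its own chain of moves, and the numerics (equivalently, the construction of $S_k$) close up only for the listed $k$, with $k=13$ the first value left open. So the proof ultimately is a finite computation: for each $k\in\{0,\dots,12\}\cup\{14\}$ carry out the explicit handle decomposition yielding $X_k$ (equivalently the embedded sphere $S_k\subset E(1)$), and conclude by the reduction above. I expect the case‑by‑case Kirby calculus of Step 3 to be the only substantial part; Steps 1 and 2 are formal once $S_k$ is in hand.
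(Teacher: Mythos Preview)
Your reduction and your identification of the complement $X_k=E(1)\setminus N_{2k+1}$ as the desired $-E_8$--bounding are exactly the paper's framework: the paper decomposes $E(1)=W_k\cup_{Y_{2k+1}^-}N_{2k+1}$ with $W_k$ simply-connected and $Q_{W_k}\cong -E_8$, and deduces both Theorem~\ref{mainintro} and the sphere realization of $k[f]-[s]$ from that single decomposition. So Steps~1--2 are correct and match the paper.

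The gap is in Step~3. You describe the construction of $S_k$ (equivalently, of the decomposition) as ``a finite computation'' to be done ``case by case'', with ``each value demand[ing] its own chain of moves''. That is precisely what the paper \emph{avoids}. The actual argument is a single, uniform handle-slide procedure, and this procedure is what determines the admissible set $\{0,\dots,12\}\cup\{14\}$ rather than the other way around. Concretely: start from Akbulut's diagram of $E(1)=M(2,3,5)\cup N_1$, in which the $N_1$ part is visible as a $0$--framed trefoil together with a $(-1)$--framed unknot $\alpha$ (the section). Slide $\alpha$ successively over the seven $(-2)$--framed components of the $E_8$ chain that are linearly linked to one another (excluding the branch vertex's extra neighbour). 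A ``twisting'' slide over each such component drops the framing of $\alpha$ by $4$ while keeping it an unknot meeting the trefoil once; this produces $\alpha$ with framing $-1,-5,-9,-13,-17,-21,-25,-29$. Sliding further over the remaining independent $(-2)$--components (up to a length--$5$ subchain) fills in $-3,-7,-11,-15,-19,-23$. The resulting list of achievable framings $-(2k+1)$ is exactly $\{-1,-3,\dots,-25,-29\}$, i.e.\ $k\in\{0,\dots,12\}\cup\{14\}$; $-27$ ($k=13$) is the first value the trick misses. No separate computation per $k$ is performed. If you do not supply this (or an equivalent) mechanism, your Step~3 is only a promise, and in particular your proposal gives no reason why the construction should terminate at $12$ and skip $13$.

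Two smaller remarks. First, ``no adjunction-type obstruction'' is true but irrelevant: the Li--Li theorem you seem to be alluding to only covers $\beta_k^2>-16$, i.e.\ $k\le7$, so for $k\ge8$ nothing short of the explicit construction is available. Second, your van~Kampen claim that $\pi_1(X_k)=1$ does not follow directly from $\pi_1(E(1))=1$ and $\pi_1(N_{2k+1})=1$; one only gets that $\pi_1(X_k)$ is normally generated by the image of $\pi_1(\Sigma)$. The paper gets simple connectivity for free because $W_k$ is obtained from $M(2,3,5)$ by handle slides and hence is a $2$--handlebody on $D^4$. For the purpose of $\frak{ds}$ only $H_1(X_k)=0$ is needed, which your Mayer--Vietoris argument does give.
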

The boundings cannot be obtained by the minimal resolution or blow-downs of minimal resolutions.
Actually, these boundings can be embedded in $E(1)$ and the complements are Gompf's nuclei $N_{2k+1}$.
\subsection{Embedded spheres in $E(1)$}
Let $E(1)$ be an elliptic fibration diffeomorphic to ${\Bbb C}P^2\#9\overline{{\Bbb C}P^2}$.
According to Li and Li's result in \cite{LL} the spherical realization of the following classes in $E(1)$ are
studied:
\begin{thm}[Li-Li \cite{LL}]
In $H_\ast({\Bbb C}P^2\#n\overline{{\Bbb C}P^2})$ with $1\le n\le 9$ all classes with $0> \xi^2>-(n+7)$,
have minimal genus $0$.
\end{thm}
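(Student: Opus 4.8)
The plan is to combine the invariance of the minimal genus function under the realized diffeomorphism group of $X_n:=\mathbb{C}P^2\#n\overline{\mathbb{C}P^2}$ with explicit constructions of embedded spheres by tubing standard algebraic spheres. Write $H_2(X_n)=\langle h,e_1,\ldots,e_n\rangle$ with $h^2=1$, $e_i^2=-1$, $e_i\cdot e_j=0$ $(i\neq j)$, and let $K=-3h+\sum_i e_i$. I would begin by recording the two elementary symmetries of the minimal genus $m(\cdot)$: $m(\xi)=m(-\xi)$, and $m(\phi_\ast\xi)=m(\xi)$ for every self-diffeomorphism $\phi$ of $X_n$. For $n\le 8$ the image of $\mathrm{Diff}(X_n)$ in $\mathrm{Aut}(H_2,Q)$ contains all permutations of $e_1,\ldots,e_n$ and all Cremona involutions $C_{ijk}$ (the reflection in the $(-2)$-class $h-e_i-e_j-e_k$, i.e. $h\mapsto 2h-e_i-e_j-e_k$, $e_i\mapsto h-e_j-e_k$ cyclically, fixing the remaining $e_\ell$); together these generate the Weyl group $W(E_n)\subset\mathrm{Aut}(K^\perp)$ --- this is classical (Wall; Friedman--Morgan). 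The case $n=9$ is handled similarly through the elliptic fibration $E(1)=X_9$ and its affine symmetries.

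Second, I would use the standard Nagata-type reduction to bring a class into normal form: sort the multiplicities so that $m_1\ge\cdots\ge m_n$, and apply $C_{123}$ whenever $d<m_1+m_2+m_3$, which strictly decreases $d$; iterating, every class is carried, up to sign and diffeomorphism, to a short list of normal forms. I would then single out which normal forms occur with $-(n+7)<\xi^2<0$ and check that each such class can be written as $\xi=\pm(A_0+\varepsilon_1A_1+\cdots+\varepsilon_rA_r)$ with $\varepsilon_i\in\{\pm1\}$, where the $A_i$ are among the tautological sphere classes of the blow-up --- the exceptional curves $e_i$ ($A^2=-1$), the line $h$ and proper transforms $h-e_i$ of lines through a point ($A^2=0$), the classes $h-e_i-e_j$ ($A^2=-1$), and the conics $2h-e_{i_1}-\cdots-e_{i_5}$ ($A^2=-1$) --- arranged so that their geometric intersection graph is a tree: consecutive representatives meet transversally in a single point, non-adjacent ones are disjoint (for instance $e_1,\ldots,e_n$ together with $h$, or a fixed exceptional section $e_1$ together with arbitrarily many fibres in class $h-e_1$). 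One also uses the observation that two parallel embedded spheres in a $(-1)$-class can be taken to meet in a single point, whose resolution is an embedded sphere in twice that class.

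Third, for such a $\xi$ I would produce the sphere directly: represent each $A_i$ by an embedded sphere $S_i$ realizing the prescribed tree, reverse the orientation of each $S_i$ with $\varepsilon_i=-1$, and resolve all $r$ intersection points compatibly with orientations (equivalently, tube the $S_i$ along $r$ annuli following the edges of the tree). The result is a connected embedded surface of Euler characteristic $2(r+1)-2r=2$, hence an embedded sphere, and it represents $A_0+\sum_i\varepsilon_iA_i$, since resolving a transverse double point of two oriented surfaces $P,Q$ yields a surface in class $[P]+[Q]$. Combined with the symmetries of Step~1 this gives $m(\xi)=0$.

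The main obstacle is Step~2: showing that the inequality $\xi^2>-(n+7)$ is exactly what guarantees the existence of a tree-type decomposition into tautological sphere classes --- equivalently, that the reduction algorithm never stalls at a normal form that is genuinely of positive genus within this square range, while at square $-(n+7)$ it does (the diagonal-type class, e.g. $h-3e_1$ when $n=1$, is the first one for which the tubing construction only yields genus $1$). Verifying this requires a careful lattice computation: one must check that the extremal configurations available in $X_n$ --- such as $e_1-e_2-\cdots-e_n$ of square $-n$, or a section minus several fibres realizing odd squares down near $-(n+7)$, together with all of their $W(E_n)$-images --- sweep out every class of every square in $\{-1,-2,\ldots,-(n+6)\}$. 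This combinatorial classification is the heart of the argument; the topology (Euler-characteristic bookkeeping and the $W(E_n)$-action) is routine by comparison.
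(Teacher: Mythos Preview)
The paper does not prove this theorem at all: it is quoted as a result of Li--Li \cite{LL} and used as background for Theorem~\ref{maincor2}, with no argument given. So there is no ``paper's own proof'' to compare against.

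That said, your outline is the right shape and is essentially the strategy Li and Li themselves use: act by the realized diffeomorphism group (permutations of the $e_i$ and reflections in $(-2)$-classes, generating the relevant Weyl group), reduce to a finite list of normal forms, and then realize each normal form by tubing together standard rational curves meeting in a tree pattern. Your Euler-characteristic bookkeeping for the tubing is correct.

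The honest gap is the one you already flag in your final paragraph. Step~2 is not a lemma one can wave through: the content of the Li--Li paper is precisely the exhaustive verification that every orbit with $-(n+7)<\xi^2<0$ contains a class admitting such a tree decomposition, and that the bound $-(n+7)$ is sharp. Your proposal names this as ``the heart of the argument'' but does not carry it out, and the $n=9$ case (where the automorphism group is affine rather than finite Weyl, and the orbit structure is genuinely different) is dismissed in one clause. As written this is a correct plan, not a proof; to turn it into one you would have to either reproduce the Li--Li case analysis or cite it, at which point you are back to invoking \cite{LL}.
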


As a by-product of Theorem~\ref{mainintro} we can obtain the following theorem:
\begin{thm}
\label{maincor2}
Let $f$ and $s$ be the general fiber and a section of elliptic fibration in $E(1)$.
We put $a_k:=k[f]-[s]\in H_2(E(1))$.
For any $0\le k\le 12$ or $k=14$, the class $a_k$ represents an embedded sphere in $E(1)$.
\end{thm}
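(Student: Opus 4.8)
The plan is to extract the sphere directly from the embedding that underlies Theorem~\ref{mainintro}. For each admissible $k$ (that is, $0\le k\le 12$ or $k=14$) that theorem is proved by exhibiting, through an explicit handle decomposition, a copy of Gompf's nucleus $N_{2k+1}$ inside $E(1)$ whose complement $X$ is homologically $1$-connected and spin with $Q_X\cong -E_8$, so that $E(1)=X\cup_{\Sigma(2,3,12k+5)}N_{2k+1}$. The nucleus $N_{2k+1}$ carries its two standard surfaces: a cusp fiber $F$ with $[F]^2=0$ and the section $S$, which is an \emph{embedded $2$-sphere} with $[S]^2=-(2k+1)$ and $F\cdot S=1$. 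Hence the only remaining point is to check that, under the inclusion $\iota\colon N_{2k+1}\hookrightarrow E(1)$, the class $\iota_\ast[S]$ equals $a_k=k[f]-[s]$ up to orientation.

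To do this I would first record the homological consequence of the decomposition. As $H_1$ vanishes for each of $X$, $N_{2k+1}$ and the splitting homology sphere, Mayer--Vietoris identifies the intersection lattice $H_2(E(1))$ with $H_2(X)\oplus H_2(N_{2k+1})\cong(-E_8)\oplus \left(\begin{smallmatrix}0&1\\1&-(2k+1)\end{smallmatrix}\right)$; thus $\iota_\ast$ maps $H_2(N_{2k+1})$ isomorphically onto the rank-$2$ unimodular summand $\Lambda:=H_2(X)^{\perp}$, inside which $\iota_\ast[F]$ is primitive isotropic and $\iota_\ast[S]$ is, by an elementary computation, the unique element of square $-(2k+1)$ meeting $\iota_\ast[F]$ exactly once. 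Since a cusp fiber is a singular fiber of an elliptic pencil and $[f]=-K_{E(1)}$ is canonical, $\iota_\ast[F]=[f]$ for the appropriate orientation of $F$. It therefore remains to see that this distinguished element of $\Lambda$ is $\pm a_k$; this cannot be read off the lattice alone (a priori $\Lambda$ could be any rank-$2$ summand with the correct form), and one must use the explicit handle picture: in it, the attaching circle of the $2$-handle producing $S$ is obtained from the standard $(-1)$-framed section circle of $E(1)$ by $k$ handle slides over the fiber $2$-handle, which turns the section class $[s]$ into $[s]-k[f]$ (the framing becoming $-1-2k$). Tracking classes through these slides gives $\iota_\ast[S]=\pm([s]-k[f])=\mp a_k$, and reversing the orientation of $S$ if necessary shows that $S$ is an embedded sphere in $E(1)$ representing $a_k$.

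The genuine difficulty is not in this deduction but in its input --- the embedding $N_{2k+1}\hookrightarrow E(1)$ with $E_8$-complement, i.e.\ Theorem~\ref{mainintro}, which is precisely where the case distinction $0\le k\le 12$, $k=14$ resides. Within the present argument the only delicate point is pinning $\iota_\ast[S]$ down to the $a_k$-orbit rather than to $a_k$ plus a root of the $E_8$-block: this is forced because the section handle of $N_{2k+1}$ is visibly built from the $E(1)$-section by slides over the fiber, so no $E_8$-direction is introduced. (For $k\le 7$ the statement also follows from the Li--Li theorem quoted above, since then $a_k^2=-(2k+1)\ge-15>-16$; the new content is $8\le k\le 12$ and $k=14$.)
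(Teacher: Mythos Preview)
Your overall strategy coincides with the paper's: take the section sphere $S$ of the embedded $N_{2k+1}\subset E(1)$ furnished by Proposition~\ref{main} and identify $\iota_\ast[S]$ in $H_2(E(1))$. Your Mayer--Vietoris set-up and the observation that the problem reduces to locating $\iota_\ast[S]$ inside the rank-$2$ summand $\Lambda=H_2(W_k)^\perp$ are correct, as is your remark that this cannot be settled by the lattice alone.

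The gap is the sentence on which everything then rests: you assert that in the handle picture the section handle of $N_{2k+1}$ arises from the $(-1)$-framed section of $N_1$ by ``$k$ handle slides over the fiber $2$-handle'', whence ``no $E_8$-direction is introduced''. That is not the construction of Proposition~\ref{main}. There $\alpha$ is slid over the $(-2)$-framed components of the linear chain in the $E_8$-plumbing $M(2,3,5)$ (a straight band keeps the framing, a twisted one drops it by $4$; this is how the framings $-1,-5,\dots,-29$ and $-3,-7,\dots,-23$ arise), never over the $0$-framed fiber. Sliding the section over the fiber inside $N_1$ would in any case only yield another handle description of $N_1$ itself, not of $N_{2k+1}$. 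Thus in the $M(2,3,5)\cup N_1$ basis the slid $\alpha$ equals $[s]$ plus a combination of $E_8$-roots --- it is precisely the $E_8$-directions that are introduced --- and the summand $\Lambda=H_2(W_k)^\perp$ is not $\langle[f],[s]\rangle$ but a genuinely different sublattice depending on $k$. The paper makes the identification instead by passing to the diagonal basis $h,e_1,\dots,e_9$ of ${\Bbb C}P^2\#9\overline{{\Bbb C}P^2}$ and writing $\alpha$ there as $-3kh+k\sum e_i+e_9=-k[f]+[s]$. To repair your argument you must either carry out that tracking in the blow-up basis, or replace the faulty handle claim by an appeal to the transitivity of self-diffeomorphisms of $E(1)$ on primitive ordinary classes of a fixed negative square.
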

This intersection number of $a_k$ is $-2k-1$.
Theorem~\ref{maincor2} can be also compared with following Finashin and Mikhalkin's theorem:
\begin{thm}[Finashin-Mikalkin\cite{FM}]
There exists a smooth embedding of $S^2$ into an $E(2)$ with the normal Euler number equal to $n$ for 
any negative even $n\ge -86$.
\end{thm}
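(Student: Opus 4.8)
The plan is to realize every required normal Euler number by a smoothly embedded sphere lying in an explicit homology class coming from the elliptic fibration on $E(2)=K3$. Fix an elliptic fibration $\pi\colon E(2)\to S^2$ with general fiber class $[f]$ (so $[f]^2=0$), a section $[s]$ (so $[s]^2=-2$ and $[f]\cdot[s]=1$), and a supply of singular fibers. For $k\ge 0$ put $\beta_k=k[f]-[s]$; then $\beta_k^2=-2k-2=-2(k+1)$ and $\beta_k\cdot[f]=-1$. Because the $K3$ intersection form is even, \emph{every} embedded surface in $E(2)$ has even self-intersection, so the parity hypothesis in the statement is forced; and as $k$ runs over $0\le k\le 42$ the square $\beta_k^2$ runs over exactly the even integers $-2,-4,\dots,-86$. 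Thus it suffices to represent $\beta_k$ by an embedded $2$-sphere for each $0\le k\le 42$.

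First I would produce an immersed sphere in the class $\beta_k$ at low cost. A fishtail fiber is a $2$-sphere immersed with a single positive double point in class $[f]$, meeting the section once. Taking $k$ pairwise disjoint fishtail fibers $F_1,\dots,F_k$ together with the reversed section (an embedded sphere in class $-[s]$) and tubing them into a single surface by internal connected sums along disjoint arcs yields an immersed sphere $S$ in class $\beta_k=k[f]-[s]$ that has genus $0$ and carries exactly $k$ positive double points. The whole problem is thereby reduced to removing these $k$ double points ambiently without introducing any genus.

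The removal would be carried out by Norman's trick: a positive double point of $S$ can be eliminated by tubing $S$ to an auxiliary embedded sphere $A$ meeting the relevant sheet transversally in a single point, the genus remaining $0$ and the homology class changing by $\pm[A]$. A suitably chosen elliptic $K3$ contains many such auxiliary spheres, namely the $(-2)$-sphere components of reducible ($I_n$-type) singular fibers and further sections provided by a positive Mordell--Weil rank. The delicate point, and the step I expect to be the main obstacle, is to choose the auxiliary spheres and their tubing arcs so that simultaneously (i) all $k$ double points disappear, (ii) the result is still a sphere, and (iii) the total homology change is absorbed — by adjusting the number of fishtail fibers used at the outset and by pairing oppositely-oriented auxiliary classes — leaving the class exactly $\beta_k$. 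This is a configuration problem in the lattice of $(-2)$-classes of $K3$, and it is precisely the finiteness of the available configuration of disjoint $(-2)$-spheres and sections that forces the cutoff at $k=42$, i.e. $n\ge-86$.

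As a sanity check that ties in with Theorem~\ref{maincor2}, I would first run a weaker fiber-sum construction. Writing $E(2)=E(1)\#_f E(1)$ and tubing, across the neck $T^3\times I$, an embedded sphere representing $k[f]-[s]$ in one summand (which exists for $0\le k\le 12$ or $k=14$ by Theorem~\ref{maincor2}) to a corresponding sphere in the other summand, the two sections glue to a section of square $-2$ and the connected sum of two spheres is again a sphere; a Mayer--Vietoris computation gives the tubed class square $-2(k+k'+1)$. Since $k+k'\le 28$, this realizes all even $n$ with $n\ge-58$. The gap from $-58$ down to $-86$ is exactly what the single-surface Norman-trick construction above is designed to close, and exhibiting enough disjoint $(-2)$-spheres to push the resolution all the way to $n=-86$ is where the genuine work lies.
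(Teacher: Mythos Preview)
This theorem is not proved in the paper at all: it is quoted verbatim from Finashin and Mikhalkin \cite{FM} as a point of comparison with the paper's own Theorem~\ref{maincor2} about sphere classes in $E(1)$. There is therefore no ``paper's own proof'' against which to compare your proposal.

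As for the proposal on its own terms, it is a sketch rather than a proof, and you acknowledge as much. The Norman-trick step is the entire content of the argument, and you explicitly defer the lattice/configuration problem of finding enough disjoint auxiliary $(-2)$-spheres to absorb all $k\le 42$ double points while controlling the homology class; without that, nothing beyond the trivial cases is established. Your fiber-sum ``sanity check'' invokes Theorem~\ref{maincor2} of the present paper to reach $n\ge -58$; this is logically fine (Theorem~\ref{maincor2} does not depend on the Finashin--Mikhalkin result) but of course could not have been the 1997 argument, and in any case still leaves the range $-60\ge n\ge -86$ untouched. If you want to reconstruct an actual proof, consult \cite{FM} directly: their note is three pages and the construction is concrete.
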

In particular, for the general fiber $f$ and a section $s$ in the K3-surface,
the class $k[f]-[s]\in H_2(E(2))$ can be represented by an embedded $S^2$ for $k\le 42$.
We will post a question on the sphere class of $a_k$ in $E(n)$ in the last section.
\subsection*{Acknowledgements}
The results in this article are partially done when I visited in Michigan State University in 2013 spring.
I am grateful for giving useful comments by S. Akbulut and the hospitality of the institute.

This work was supported by JSPS KAKENHI Grant Number 24840006.
\section{Basic properties of invariants $\frak{ds}$ and $g_8$.}
We will prove the basic properties on $\frak{ds}$ and $g_8$.
Let $\Theta_{\Bbb Z}^3$ denote the group of the homology 3-spheres up to h-cobordism.
\begin{thm}
Let $\frak{ds}'$ be one of $\frak{ds}$, $\overline{\frak{ds}}$ and $g_8'$ denote $g_8$, or $\overline{g_8}$.
Then the following properties are satisfied:
\begin{enumerate}
\item The $\frak{ds}'$ and $g_8'$ are h-cobordism invariants i.e., $\frak{ds}':\Theta_{\Bbb Z}^3\to {\Bbb N}\cup\{0,\infty\}$.
\item $\overline{\frak{ds}}(Y)=0$ or $\overline{g_8}(Y)=0$, if and only if $[Y]=0$ in $\Theta_{\Bbb Z}^3$.
\item If $\frak{ds}(Y),g_8(Y)<\infty$, then $\mu(Y)\equiv \frak{ds}'(Y) \equiv g_8'(Y)\equiv 0\bmod 2$
\item If $\epsilon(Y_1)\epsilon(Y_2)=1$, then $\frak{ds}(Y_1)+\frak{ds}(Y_2)\le \frak{ds}(Y_1+Y_2)$.
\item If $\epsilon(Y_1)\epsilon(Y_2)=1$, then $\overline{\frak{ds}}(Y_1+Y_2)\le \overline{\frak{ds}}(Y_1)+\overline{\frak{ds}}(Y_2)$.
\item If $\frak{ds}(Y)=1$, then $g_8(Y)=1$.
\item $\frak{ds}(-Y)=\frak{ds}(Y)$ and $\overline{\frak{ds}}(-Y)=\overline{\frak{ds}}(Y)$.
\item $g_8(-Y)=g_8(Y)$ and $\overline{g_8}(-Y)=\overline{g_8}(Y)$.
\item If $0<\frak{ds}(Y)<\infty$, then $\epsilon(Y)d(Y)<0$ and $\frak{ds}(Y)\le |d(Y)|/2$.
\item If $\frak{ds}'(Y)$ or $g_8'(Y)$ is odd, then $|Y|=\infty$.
\item If $\frak{ds}(Y)$ is even, then we have $\frak{ds}(Y)+1\le |Y|$.
\item If $|Y|=1,2$, then $\frak{ds}(Y)=\infty$.
\item If $\epsilon(Y)\neq \infty$, then $\frak{ds}(Y)-1\le m(-Y)/2-1$.
\item Suppose that $Y$ is a Seifert homology 3-sphere.
If $\frak{ds}(Y)<\infty$, then $\bar{\mu}(Y)\epsilon(Y)>0$ and $\frak{ds}(Y)\le |\bar{\mu}(Y)|$.
\end{enumerate}
\end{thm}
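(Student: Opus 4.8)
The plan is to sort the fourteen assertions according to the single tool each one needs, and to treat the ``$\frak{ds}$'' and ``$g_8$'' versions, and the ``max'' and ``min'' versions, in parallel whenever the argument is formally the same; I would do the purely topological items first, since they are used by the later ones.

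\emph{Gluing and orientation (items (1), (2), (4), (5), (7), (8)).} For (1), I would note that an h-cobordism $W$ from $Y_1$ to $Y_2$ is a $\Z$-homology cobordism, hence spin (as $H^2(W;\Z/2)=0$), so $X\mapsto X\cup_{Y_1}W$ carries a homologically $1$-connected definite spin (resp. $E_8$-) bounding of $Y_1$ to one of $Y_2$ with the same intersection form by Mayer--Vietoris; this is a bijection on the sets of values $b_2/8$ (resp. $|n|$), so all four invariants factor through $\Theta^3_{\Z}$. For (7)--(8), the orientation reversal $X\mapsto\overline X$ sends a negative-definite spin bounding of $Y$ with second Betti number $b_2$ to a positive-definite one of $-Y$ with the same $b_2$, and sends $Q_X=nE_8$ to $Q_{\overline X}=(-n)E_8$; together with $\epsilon(-Y)=-\epsilon(Y)$ this matches the defining sets of $Y$ and $-Y$. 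For (4)--(5), when $\epsilon(Y_1)\epsilon(Y_2)=1$ I would take definite spin boundings $X_i$ of $Y_i$ of the common sign realizing $\frak{ds}$ (resp. $\overline{\frak{ds}}$); the boundary connected sum $X_1\natural X_2$ is a homologically $1$-connected definite spin bounding of $Y_1\#Y_2$ with $b_2=b_2(X_1)+b_2(X_2)$, giving max-superadditivity in (4) and min-subadditivity in (5). For (2), $\overline{\frak{ds}}(Y)=0$ (resp. $\overline{g_8}(Y)=0$, which forces the parameter $0$) means $Y$ bounds a homologically $1$-connected $X$ with $b_2(X)=0$, necessarily an integral homology ball, which is spin; conversely $[Y]=0$ supplies such a ball. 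So either vanishing is equivalent to $[Y]=0$.

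\emph{Rokhlin parity and uniqueness of $E_8$ (items (3), (10), (6)).} For a spin definite bounding $X$ of a homology sphere, $Q_X$ is even, unimodular and definite, hence of rank $8k$, and Rokhlin's theorem gives $\mu(Y)\equiv\sigma(X)/8\equiv k\pmod 2$. Applying this to a bounding realizing $\frak{ds}'(Y)$ and to an $E_8$-bounding realizing $g_8'(Y)$ (both exist under the hypothesis of (3)) yields the parity relations $\mu(Y)\equiv\frak{ds}'(Y)\equiv g_8'(Y)\pmod 2$ of (3); for (10), if $\frak{ds}'(Y)$ or $g_8'(Y)$ is odd it is in particular finite, so this congruence forces $\mu(Y)=1$ and hence $|Y|=\infty$. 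For (6): if $\frak{ds}(Y)=1$ there is a definite spin bounding with $b_2=8$, whose form is the unique even definite unimodular rank-$8$ lattice $\pm E_8$, so the bounding is an $E_8$-bounding and $g_8(Y)\ge 1$; conversely an $E_8$-bounding with parameter $|n|$ is a definite spin bounding with $b_2=8|n|$, so $g_8(Y)\le\frak{ds}(Y)=1$.

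\emph{Gauge-theoretic and Heegaard Floer inputs (items (9), (11), (12), (13), (14)).} For (9), when $0<\frak{ds}(Y)<\infty$ we have $\epsilon(Y)=\pm1$; feeding the negative-definite spin bounding of $-\epsilon(Y)\cdot Y$ (which is $Y$ if $\epsilon(Y)=-1$, and $-Y$ if $\epsilon(Y)=+1$, using $\frak{ds}(-Y)=\frak{ds}(Y)$) with $b_2=8\frak{ds}(Y)$ into the spin specialization~(\ref{spinos}) of the Ozsv\'ath--Szab\'o inequality gives $8\frak{ds}(Y)\le 4d(-\epsilon(Y)Y)=-4\epsilon(Y)d(Y)$ via $d(-Y)=-d(Y)$; since $\frak{ds}(Y)>0$ this forces $\epsilon(Y)d(Y)<0$ and $\frak{ds}(Y)\le-\epsilon(Y)d(Y)/2=|d(Y)|/2$. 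For (11)--(12), assume $\frak{ds}(Y)$ finite and even; by the Rokhlin congruence $\mu(Y)=0$, so $|Y|<\infty$, and $|Y|\ge1$ once $\frak{ds}(Y)>0$ (else a homology ball capping a definite spin bounding would contradict Donaldson). Gluing a definite spin bounding $X_1$ of $Y$ with $b_2=8\frak{ds}(Y)$ to the orientation reverse of a bounding $X_2$ of $Y$ with $Q_{X_2}=|Y|H$ produces a closed spin $4$-manifold $Z$ with $b_2(Z)=8\frak{ds}(Y)+2|Y|$, $|\sigma(Z)|=8\frak{ds}(Y)$, and $b_2^{\pm}(Z)\ge1$; Furuta's inequality $b_2(Z)\ge\tfrac{10}{8}|\sigma(Z)|+2$ then gives $|Y|\ge\frak{ds}(Y)+1$, which is (11) in the nontrivial range ($\frak{ds}(Y)=0$ being the degenerate case $[Y]=0$, $|Y|=0$). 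For (12), if $|Y|\in\{1,2\}$ then $\frak{ds}(Y)$ is not odd (by (10)), not a positive even integer (else $\frak{ds}(Y)+1\le|Y|\le2$ forces $\frak{ds}(Y)=0$), and not $0$ (else $|Y|=0$), so $\frak{ds}(Y)=\infty$. For (13) with $\epsilon(Y)=-1$ (the case $\epsilon(Y)=0$ being trivial, and $\epsilon(Y)=+1$ by the same argument after orientation reversal), a negative-definite spin bounding $X_1$ realizing $\frak{ds}(Y)$ has $Q_{X_1}\cong N\oplus 0\cdot H$ with $N$ even negative-definite of rank $8\frak{ds}(Y)$, so it is a competitor in the displayed formula for $m(-Y)/2$, giving $m(-Y)/2\ge b_2(N)/8=\frak{ds}(Y)$, i.e. $\frak{ds}(Y)-1\le m(-Y)/2-1$. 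For (14), let $Y$ be Seifert with $\frak{ds}(Y)<\infty$ and $\epsilon(Y)=-1$ (the case $\epsilon(Y)=+1$ reduces to this via $-Y$ using $\overline\mu(-Y)=-\overline\mu(Y)$ and $\frak{ds}(-Y)=\frak{ds}(Y)$, and $\epsilon(Y)=0$ is degenerate); the negative-definite spin bounding realizing $\frak{ds}(Y)$ has $b_2=8\frak{ds}(Y)$, so Ue's Theorem~\ref{Umain} gives $8\frak{ds}(Y)\le-8\overline\mu(Y)$, hence $\overline\mu(Y)\epsilon(Y)=-\overline\mu(Y)>0$ and $\frak{ds}(Y)\le-\overline\mu(Y)=|\overline\mu(Y)|$.

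\emph{Main obstacle.} Everything topological above (boundary sums, orientation reversal, Mayer--Vietoris over homology cobordisms, the Rokhlin congruence, uniqueness of $E_8$) is routine; the substantive content is the four inequality inputs --- (\ref{spinos}) for (9), Furuta's $\tfrac{10}{8}$-inequality for (11)--(12), the Bohr--Lee formula for $m$ in (13), and Ue's theorem for (14) --- and, for each, checking its hypotheses (spin, indefiniteness/$b_2^{\pm}>0$, Seifert) and getting the orientation right so that the sign of $d(Y)$, of $\overline\mu(Y)$, or of the chosen competitor comes out as stated. I expect the two fiddly points to be the sign bookkeeping in (9), (13), (14) for the $\epsilon(Y)=+1$ branch, and the clean treatment of the degenerate $\frak{ds}(Y)=0$ (equivalently $[Y]=0$) cases in (2), (11), (12), (14).
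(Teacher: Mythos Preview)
Your proposal is correct and follows essentially the same approach as the paper: each item is dispatched by exactly the tool the paper uses (h-cobordism gluing for (1), homology-ball for (2), Rokhlin parity for (3) and (10), boundary sum for (4)--(5), uniqueness of the rank-$8$ even unimodular lattice for (6), orientation reversal for (7)--(8), inequality~(\ref{spinos}) for (9), Furuta's $10/8$-inequality for (11)--(12), the definition of $m$ for (13), and Ue's Theorem~\ref{Umain} for (14)). Your write-up is in fact more detailed than the paper's own one-line-per-item proof, and your explicit attention to the orientation bookkeeping in (9), (13), (14) and to the degenerate $\frak{ds}(Y)=0$ cases goes beyond what the paper supplies.
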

\begin{proof}
(1) Suppose that $Y,Y'$ be h-cobordant homology 3-spheres.
If $\frak{ds}'(Y)<\infty$, then there exists a definite spin bounding $W$ of $Y$ with maximal (or minimal) $b_2$.
Connecting between $Y$ and $Y'$ by the cobordism, we get bounding $W'$ of $Y'$ with a maximal (or minimal) $b_2$.
If $\frak{ds}'(Y)=\infty$ and $\frak{ds}'(Y')$ is finite, then we get a definite spin bounding of $Y$.
This is contradiction.
Thus, if $\frak{ds}'(Y)=\infty$, then $\frak{ds}'(Y')=\infty$

(2) Suppose $Y$ is a homology 3-sphere with $\overline{\frak{ds}}(Y)=0$.
Then $Y$ bounds a homology 4-ball $W$.
Puncturing $W$, we get an h-cobordism between $Y$ and $S^3$.

(3) Suppose that $W$ is any definite spin bounding of $Y$.
Then by the definition of $\mu$ we have $b_2(W)/8\equiv \mu(Y)\bmod 2$.

(4,5) From the properties of maximal and minimal, we have the inequalities by taking the boundary sum of the two definite bounding.

(6) From the property that the definite quadratic form is isomorphic to $\pm E_8$.

(7,8) The definition of $\frak{ds}$ and $g_8$ does not depend on the orientation.

(9) From the inequality~(\ref{spinos}) the inequalities hold.

(10) If $\frak{ds}'(Y)$ or $g'_8(Y)$ is odd, then $\mu(Y)=\frak{ds}(Y)=0(2)$, thus we have $|Y|=\infty$.

(11) If $\frak{ds}(Y)$ is even, then $|Y|<\infty$ holds.
Then we get a closed spin 4-manifold by gluing the two boundings.
The intersection form is isomorphic to $\frak{ds}(Y)\cdot(-E_8)\oplus |Y|\cdot H$.
Furuta's inequality implies $|Y|\ge\frak{ds}(Y)+1$.

(12) If $|Y|=1,2$ and $\frak{ds}(Y)$ is finite, then due to Furuta's inequality $2\ge \frak{ds}(\pm Y)+1$ holds.
Since $\frak{ds}(Y)$ is even, then $\frak{ds}(Y)=0$ namely $[Y]=0$ in $\Theta_{\Bbb Z}^3$.
This contradicts about $|Y|>0$.

(13) The assertion by the definition of $m$ and $\frak{ds}$ is satisfied.

(14) By the result in Theorem~\ref{Umain}, we get the bound of the $\frak{ds}$-invariant.
\end{proof}
\section{The negative $E_8$-bondings}
\subsection{Milnor-fiber construction.}
The Milnor-fiber $M(p,q,r)$ is the 4-manifold defined as the compactification of 
$$\{(x,y,z)\in {\Bbb C}^3|x^p+y^q+z^r=\epsilon\},$$
where $\epsilon$ is some constant.
The boundary is the Brieskorn rational homology 3-spheres $\Sigma(p,q,r)$.
If each two elements in $\{p,q,r\}$ are relatively prime, then the Brieskorn 3-sphere is a homology 3-sphere.
The Milnor-fibers are nice examples of spin bounding.
As mentioned in \cite{GS}, for integers $p,q,r,p',q',r'$ with $p\le p'$, $q\le q'$ and
$r\le r'$, there exists the inclusion $M(p,q,r)\hookrightarrow M(p',q',r')$.
This gives a cobordism between $\Sigma(p,q,r)$ and $\Sigma(p',q',r')$.

{\bf Proof of Theorem~\ref{incmilnor}}.
Here, consider the following natural inclusion:
$$M(2,3,6n-5)\hookrightarrow M(2,3,6n-1).$$
The induced cobordism $X_n$ between $\Sigma(2,3,6n-5)$ and $\Sigma(2,3,6n-1)$ has intersection form $-E_8$.
In fact, it is well-known that $Q_{M(2,3,6n-5)}\cong (n-1)(-E_8)\oplus 2(n-1)H$ and $Q_{M(2,3,6n-1)}\cong n(-E_8)\oplus 2(n-1)H$.
Thus, $Q_{X_n}$ is isomorphic to $-E_8$.

By removing one 3-handle from $X_n$, we get a cobordism $W_n$ from a punctured $\Sigma(2,3,6n-5)$ to 
punctured $\Sigma(2,3,6n-1)$.
The manifolds $Y_n$ is $\partial W_n=M_n$ and
$Q_{W_n}\cong -E_8$.

On the other hands, since $d(M_n)=d(\Sigma(2,3,6n-1)-d(\Sigma(2,3,6n-5)=2-0=2$,
we get $\frak{ds}(M_n)=g_8(M_n)=\epsilon(M_n)=1$.
\qed

\subsection{The minimal resolution.}
Let $W(G)$ be a plumbed 4-manifold associated with a graph $G$, which is a tree weighted by integer.
\begin{defi}
Let $G$ be a connected star-shaped graph as in Figure~\ref{Seifert}.
The `star-shaped' means the graph has at most one $n$-valent vertex with $n\ge 3$.
Let $\{v_0,v_i^j\}$ be the vertices and $m_0$ and $m_i^j$ be the weights of the vertices $v_0$ and $v_i^j$.
That is, the unique vertex $v_0$ is at least 3-valent and the valencies of the other vertices are all $1$ or $2$.
If $G$ satisfies the following properties, we call the graph $G$ is minimal:
\begin{enumerate}
\item The incidence matrix is negative-definite.
\item $m_0\le -1$.
\item $m_i^j\le -2$.
\end{enumerate}
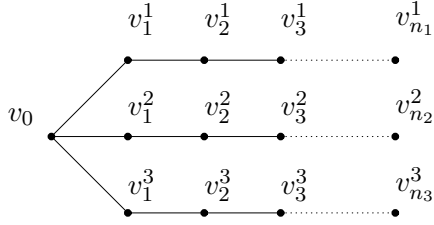
\begin{figure}[htbp]
\begin{center}
\unitlength 0.1in
\begin{picture}( 23.4300, 11.0300)( 12.6000,-20.0300)
%
\special{pn 20}%
\special{sh 1}%
\special{ar 1800 1600 10 10 0  6.28318530717959E+0000}%
\put(17.1000,-15.4000){\makebox(0,0)[rb]{$v_0$}}%
\put(22.0000,-10.7000){\makebox(0,0)[lb]{$v_1^1$}}%
\put(26.0000,-10.7000){\makebox(0,0)[lb]{$v_2^1$}}%
\put(30.0000,-10.7000){\makebox(0,0)[lb]{$v_3^1$}}%
\put(36.0000,-10.7000){\makebox(0,0)[lb]{$v_{n_1}^1$}}%
%
\special{pn 20}%
\special{sh 1}%
\special{ar 2200 1200 10 10 0  6.28318530717959E+0000}%
%
\special{pn 20}%
\special{sh 1}%
\special{ar 2200 2000 10 10 0  6.28318530717959E+0000}%
%
\special{pn 20}%
\special{sh 1}%
\special{ar 2600 2000 10 10 0  6.28318530717959E+0000}%
%
\special{pn 20}%
\special{sh 1}%
\special{ar 3000 2000 10 10 0  6.28318530717959E+0000}%
%
\special{pn 20}%
\special{sh 1}%
\special{ar 3600 2000 10 10 0  6.28318530717959E+0000}%
%
\special{pn 20}%
\special{sh 1}%
\special{ar 3600 1600 10 10 0  6.28318530717959E+0000}%
%
\special{pn 20}%
\special{sh 1}%
\special{ar 3000 1600 10 10 0  6.28318530717959E+0000}%
%
\special{pn 20}%
\special{sh 1}%
\special{ar 2600 1600 10 10 0  6.28318530717959E+0000}%
%
\special{pn 20}%
\special{sh 1}%
\special{ar 2200 1600 10 10 0  6.28318530717959E+0000}%
%
\special{pn 20}%
\special{sh 1}%
\special{ar 3600 1200 10 10 0  6.28318530717959E+0000}%
%
\special{pn 20}%
\special{sh 1}%
\special{ar 3000 1200 10 10 0  6.28318530717959E+0000}%
%
\special{pn 20}%
\special{sh 1}%
\special{ar 2600 1200 10 10 0  6.28318530717959E+0000}%
%
\special{pn 4}%
\special{pa 1800 1600}%
\special{pa 2200 1200}%
\special{fp}%
%
\special{pn 4}%
\special{pa 3000 2000}%
\special{pa 3600 2000}%
\special{dt 0.027}%
%
\special{pn 4}%
\special{pa 3000 1600}%
\special{pa 3600 1600}%
\special{dt 0.027}%
%
\special{pn 4}%
\special{pa 3000 1200}%
\special{pa 3600 1200}%
\special{dt 0.027}%
%
\special{pn 4}%
\special{pa 2200 2000}%
\special{pa 3000 2000}%
\special{fp}%
%
\special{pn 4}%
\special{pa 1800 1600}%
\special{pa 2200 2000}%
\special{fp}%
%
\special{pn 4}%
\special{pa 1800 1600}%
\special{pa 3000 1600}%
\special{fp}%
%
\special{pn 4}%
\special{pa 2200 1200}%
\special{pa 3000 1200}%
\special{fp}%
\put(22.0000,-15.3000){\makebox(0,0)[lb]{$v_1^2$}}%
\put(26.0000,-15.3000){\makebox(0,0)[lb]{$v_2^2$}}%
\put(30.0000,-15.3000){\makebox(0,0)[lb]{$v_3^2$}}%
\put(36.0000,-15.3000){\makebox(0,0)[lb]{$v_{n_2}^2$}}%
\put(22.0000,-19.4000){\makebox(0,0)[lb]{$v_1^3$}}%
\put(26.0000,-19.4000){\makebox(0,0)[lb]{$v_2^3$}}%
\put(30.0000,-19.4000){\makebox(0,0)[lb]{$v_3^3$}}%
\put(36.0000,-19.4000){\makebox(0,0)[lb]{$v_{n_3}^3$}}%
\end{picture}%
\caption{Seifert diagram with three branches.}
\label{Seifert}
\end{center}
\end{figure}
\end{defi}

The minimal graph gives a negative-definite plumbing 4-manifold with a Seifert rational homology 3-sphere boundary.
Furthermore, if all the weights are even, then the plumbing 4-manifold is a spin negative-definite bounding.
We prove the following:
\begin{prop}
If $\Sigma(p,q,r)$ is a Brieskorn homology 3-sphere whose minimal resolution with negative-definite gives an $E_8$-bounding with $b_2=8$.
Then $\Sigma(p,q,r)=\Sigma(2,3,5)$ or $\Sigma(3,4,7)$.
\end{prop}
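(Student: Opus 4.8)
The minimal good resolution of the Brieskorn singularity is the plumbing $W(\Gamma)$ on a star-shaped graph $\Gamma$ with a central vertex of weight $-b_0$ and three legs, the $i$-th leg being a linear chain whose negative continued fraction $[b_{i,1},\dots,b_{i,s_i}]^-$ (all $b_{i,j}\ge 2$) equals $a_i/\omega_i$, where $\{a_1,a_2,a_3\}=\{p,q,r\}$ and $0<\omega_i<a_i$. The standard Seifert-invariant computation gives $\det(-Q_\Gamma)=a_1a_2a_3\bigl(b_0-\sum_i\omega_i/a_i\bigr)$, which equals $1$ because $\Sigma(p,q,r)$ is a homology sphere; in particular $a_1,a_2,a_3$ are pairwise coprime (if a prime divided two of them it would divide $\det(-Q_\Gamma)$). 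The hypothesis is that $W(\Gamma)$ is negative definite, spin, and has $b_2=8$, which amounts to: $\Gamma$ has exactly $8$ vertices (so $s_1+s_2+s_3=7$), all weights are even, and $Q_\Gamma$ is negative definite — and then $Q_\Gamma\cong -E_8$ automatically, $-E_8$ being the only even, negative-definite, unimodular form of rank $8$. So the plan is to classify such $\Gamma$ and read off $p,q,r$.

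\textbf{Step 1 (shape of $\Gamma$).} Using the recursion $[b_1,\dots,b_s]^-=(b_1P-Q)/P$ with $P/Q=[b_2,\dots,b_s]^-$, a short induction shows that for a chain of length $s$ with all weights $\le -2$ the numerator and denominator satisfy $0<(\text{denom})<(\text{numer})$; combined with the further assumption that all weights are even, a second induction shows the numerator is even exactly when $s$ is odd. Since $a_1,a_2,a_3$ are pairwise coprime, at most one is even, so at most one $s_i$ is odd; as $s_1+s_2+s_3=7$ is odd, exactly one $s_i$ is odd, and $(s_1,s_2,s_3)$ equals $(1,2,4)$ or $(2,2,3)$ up to order. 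Next, $b_0$ is a positive even integer; if $b_0\ge 4$, then $0<\omega_i/a_i<1$ and $a_i\ge 2$ give $\det(-Q_\Gamma)>a_1a_2a_3\ge 8>1$, a contradiction. Hence $b_0=2$ and
\[
\frac{\omega_1}{a_1}+\frac{\omega_2}{a_2}+\frac{\omega_3}{a_3}=2-\frac{1}{a_1a_2a_3}.
\]

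\textbf{Step 2 (finite search).} I will use the elementary continuant fact that among length-$s$ chains with all weights $\le -2$, the chain of $s$ copies of $-2$ realizes the smallest continued-fraction value $(s+1)/s$ and the smallest numerator, namely $s+1$; moreover for $s=2$ the numerator is $\equiv 3\bmod 4$, so it is $3$ only for $[-2,-2]$ and otherwise $\ge 7$. In the $(1,2,4)$ case this forces $a_1a_2a_3\ge 2\cdot3\cdot5=30$, while the displayed identity and $\omega_i/a_i\le s_i/(s_i+1)$ give $2-1/(a_1a_2a_3)\le\tfrac12+\tfrac23+\tfrac45=\tfrac{59}{30}$, so $a_1a_2a_3\le 30$; thus $(a_1,a_2,a_3)=(2,3,5)$, every leg is a string of $-2$'s, $\Gamma$ is the $E_8$ diagram, and $\Sigma(p,q,r)=\Sigma(2,3,5)$. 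In the $(2,2,3)$ case, pairwise coprimality forbids both length-$2$ legs from being $[-2,-2]$; an estimate using $\omega/a\le 4/7$ for a length-$2$ leg other than $[-2,-2]$ and $\omega/a\le 3/4$ for the length-$3$ leg shows exactly one length-$2$ leg is $[-2,-2]$ (numerator $3$), and substituting back into the displayed identity forces the other length-$2$ leg to be $[-2,-4]$, the length-$3$ leg to be $[-2,-2,-2]$, and $(a_1,a_2,a_3)=(3,4,7)$, i.e. $\Sigma(p,q,r)=\Sigma(3,4,7)$. Since $\Sigma(2,3,5)$ and $\Sigma(3,4,7)$ do arise this way, this proves the proposition.

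\textbf{Main difficulty.} Once $b_0=2$ is known the $(1,2,4)$ case is immediate, since the "all $-2$" graph is exactly the extremal one and already saturates the bound. The real work is the $(2,2,3)$ case: there $\sum_i s_i/(s_i+1)=\tfrac{25}{12}>2$, so negative-definiteness alone gives no bound on $a_1a_2a_3$, and one must run the two-step squeeze above, exploiting both that $\sum_i\omega_i/a_i=2-1/(a_1a_2a_3)$ is forced to sit just below $2$ and the pairwise coprimality of the $a_i$, to isolate the single solution $\Sigma(3,4,7)$.
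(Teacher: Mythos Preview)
Your proof is correct and takes a genuinely different route from the paper's. The paper first asserts (without argument) that the central weight is $-2$ and that only the $(1,2,4)$ and $(2,2,3)$ shapes can be unimodular; it then treats the $(2,2,3)$ shape by a brute-force determinant computation in seven parameters, repeatedly bounding coefficients to force $\Sigma(3,4,7)$, and disposes of the $(1,2,4)$ shape by citing the Matsumoto--Yamada classification of even minimal resolutions of $\Sigma(2,q,r)$. By contrast, you work throughout with the Seifert identity $\sum_i \omega_i/a_i = b_0 - 1/(a_1a_2a_3)$: your parity observation on even-weighted continued fractions simultaneously rules out $(1,1,5)$ and $(1,3,3)$ via pairwise coprimality, your $b_0\ge 4$ argument is explicit, and the $(1,2,4)$ case falls out in one line from the extremal bound $\omega_i/a_i\le s_i/(s_i+1)$ without any external citation. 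The $(2,2,3)$ case is where your approach pays off most: instead of a multi-page inequality chase on determinants, the congruence $a\equiv 3\bmod 4$ for length-$2$ legs plus coprimality forces exactly one $[-2,-2]$ leg, and the sharpened bound $\omega/a\le 4/7$ for the other pins down $(3,7,4)$ directly. Your argument is self-contained and more conceptual; the paper's has the virtue of being completely mechanical once the shape is fixed, but at the cost of an outside reference for half the result.
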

\begin{proof}
The minimal resolution graph of the Seifert structure we require is rank$=8$, unimodular, negative-definite and even.
Since the graph is even, the weight of the central vertex is $-2$.
The three possible lengths $n_1\le n_2\le n_3$ of branches are $(n_1,n_2,n_3)=(1,2,4),(2,2,3)$, in fact other ones $(1,1,5),(1,3,3)$ cannot be unimodular.

Let us consider the case of $(2,2,3)$ as in Figure~\ref{223},
where $b,c,d,e,f,g,h$ are positive integers.
Let $D=D(b,c,d,e,f,g,h)$ denote the determinant of the resolution graph.
Since the graph gives a homology 3-sphere, $D=1$ holds.
The coefficient of $f$ in $D$ is 
\begin{eqnarray*}
&&4(16bcde-4cde-4bce-4de-4bc+1+c+e)(4gh-1)\\
&\ge &12(4(bc-1)(de-1)+4ce(b(d-1)+d(b-1))+c+e)>0\\
\end{eqnarray*}
Thus $D\ge D(b,c,d,e,1,g,h)$ holds.
By considering the coefficients of $g$ and $h$ in $D(b,c,d,e,1,g,h)$, we have
$D\ge D(b,c,d,e,1,g,h)\ge D(b,c,d,e,1,1,1)$.
If $bc\ge 2$ and $de\ge 2$, then we have
\begin{eqnarray*}
D(b,c,d,e,1,1,1)&=&5+(4bc-5)(4de-5)+32bcd(d-1)+32cde(b-1)+8e+8c\\
&>&1.
\end{eqnarray*}
Thus, this case is not unimodular.
From the symmetry of the graph we may assume $d=e=1$.

Further, if $b\ge 2$, then $D\ge D(b,c,1,1,1,1,1)=28bc-24c-7\ge 32c-7>1$.
Thus we have $b=1$ and due to $D\ge D(1,c,1,1,1,1,1)=4c-7$ we have $c=1,2$.
If we suppose $c=2$, then $1=D\ge D(1,c,1,1,1,1,1)=1$, hence $f=g=h=1$ holds.
This case corresponds to the Brieskorn 3-sphere $\Sigma(3,4,7)$.
If we suppose $c=1$, then $1=D(1,1,1,1,f,g,h)=4(12fgh-9gh-3h-3f)+9\ge 3(f-1)(h-1)+9gh(f-1)+6>1$ holds.
Hence this case does not occur.

In the case of $(1,2,4)$, we use the result in \cite{MY}.
They classified the Brieskorn homology 3-spheres with type $\Sigma(2,q,r)$ and even minimal resolution.
Their theory shows that the homology 3-spheres with rank $8$ among the Brieskorn homology 3-spheres are $\Sigma(2,3,5)$ only .
\begin{figure}[htbp]
\begin{center}
\unitlength 0.1in
\begin{picture}( 16.0000,  9.8000)(  8.0000,-14.0000)
%
\special{pn 20}%
\special{sh 1}%
\special{ar 1200 1000 10 10 0  6.28318530717959E+0000}%
\special{sh 1}%
\special{ar 1000 800 10 10 0  6.28318530717959E+0000}%
\special{sh 1}%
\special{ar 800 600 10 10 0  6.28318530717959E+0000}%
\special{sh 1}%
\special{ar 1000 1200 10 10 0  6.28318530717959E+0000}%
\special{sh 1}%
\special{ar 800 1400 10 10 0  6.28318530717959E+0000}%
\special{sh 1}%
\special{ar 1600 1000 10 10 0  6.28318530717959E+0000}%
\special{sh 1}%
\special{ar 2000 1000 10 10 0  6.28318530717959E+0000}%
\special{sh 1}%
\special{ar 2400 1000 10 10 0  6.28318530717959E+0000}%
%
\special{pn 4}%
\special{pa 2400 1000}%
\special{pa 1200 1000}%
\special{fp}%
\special{pa 1200 1000}%
\special{pa 800 600}%
\special{fp}%
\special{pa 1200 1000}%
\special{pa 800 1400}%
\special{fp}%
\put(12.0000,-8.9000){\makebox(0,0)[lb]{$-2$}}%
\put(15.4000,-9.0000){\makebox(0,0)[lb]{$-2f$}}%
\put(23.2000,-9.0000){\makebox(0,0)[lb]{$-2h$}}%
\put(10.3000,-7.2000){\makebox(0,0)[lb]{$-2b$}}%
\put(8.0000,-5.9000){\makebox(0,0)[lb]{$-2c$}}%
\put(19.2000,-9.0000){\makebox(0,0)[lb]{$-2g$}}%
\put(10.6000,-12.7000){\makebox(0,0)[lb]{$-2d$}}%
\put(8.7000,-15.5000){\makebox(0,0)[lb]{$-2e$}}%
\end{picture}%
\caption{The resolution graph with type $(2,2,3)$.}
\label{223}
\end{center}
\end{figure}
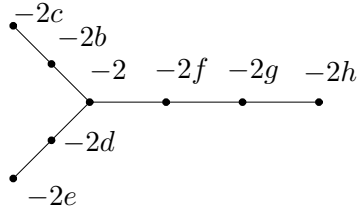
\end{proof}

\begin{prop}
\label{Brieskon4sing}
Let $\Sigma(a_1,a_2,\cdots,a_n)$ be a Brieskorn homology 3-sphere with $n\ge 4$.
Suppose that the minimal resolution graph is even and rank $8$.
Then $n=4$ and the Brieskorn 3-spheres are $\Sigma(2,3,7,11)$, $\Sigma(2,3,7,23)$, or $\Sigma(3,4,7,43)$.
\end{prop}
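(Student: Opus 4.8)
The plan is to classify the even, negative-definite, rank-$8$ star-shaped plumbing graphs of the type in Figure~\ref{Seifert} whose boundary is a Brieskorn homology $3$-sphere, and then to read off the corresponding manifolds. Recall that the minimal resolution of $\Sigma(a_1,\dots,a_n)$ with $n\ge 3$ is such a graph: a central vertex of weight $-b_0$ together with $n$ linear arms, where the negated weights along the $i$-th arm form the continued fraction $[b_{i,1},\dots,b_{i,n_i}]$ (entries $\ge 2$) of $\alpha_i/\omega_i$, with $\alpha_i=a_i$ and $(\alpha_i,\omega_i)$ the $i$-th Seifert orbit invariant; the $\alpha_i$ are pairwise coprime, the graph is negative definite iff $b_0>\sum_i\omega_i/\alpha_i$, and its boundary is a homology sphere iff $\bigl(b_0-\sum_i\omega_i/\alpha_i\bigr)\prod_i\alpha_i=1$. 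Since the graph has $1+\sum_i n_i=8$ vertices, the arm lengths $n_i$ sum to $7$. First I would pin down the arm lengths: evenness forces every arm weight $\le-2$ and $b_0$ even, so $b_0\ge 2$; along an even arm of length $\ell$ the continued-fraction numerator satisfies $N_k\equiv-N_{k-2}\pmod 2$ with $N_0=1$ and $N_1$ even, whence $\alpha_i$ has parity opposite to $n_i$. Pairwise coprimality then allows at most one $\alpha_i$ to be even, i.e.\ at most one arm of odd length; as $\sum_i n_i=7$ is odd, exactly one arm has odd length, and the remaining $n-1\ge 3$ arms have even length $\ge 2$ summing to $\ge 6$. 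Hence $n=4$ with arm lengths $(1,2,2,2)$.

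Next I would bound the weights with the homology-sphere equation. Write the central weight $-2a_0$, the length-$1$ arm as $[2a]$ (so $\alpha_1=2a$, $\omega_1=1$), and each length-$2$ arm as $[2p,2q]$ (so $\alpha=4pq-1$, with Seifert contribution $\omega/\alpha$ equal to $2q/(4pq-1)$ or $2p/(4pq-1)$ according to which end is glued to the center). Such a contribution is $\le 2/3$, with equality only for the arm $[2,2]$ (then $\alpha=3$); it is $\le 4/7$ when the inner weight is $-2$ but the arm is not $[2,2]$; and it is $<1/4$ when the inner weight is $\le-4$. Pairwise coprimality allows at most one arm to be $[2,2]$, so $\prod_{i\ge 2}\alpha_i\ge 3\cdot 7\cdot 11$ and $\sum_{i\ge 2}\omega_i/\alpha_i<\tfrac23+\tfrac47+\tfrac47=\tfrac{38}{21}$. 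Substituting into the homology-sphere equation (and using $\omega_1/\alpha_1\le\tfrac12$) should give $2a_0<\tfrac52$, so $a_0=1$; and then $a\ge 3$ would push $\sum_{i\ge 2}\omega_i/\alpha_i$ above $\tfrac{38}{21}$, a contradiction, so $a\in\{1,2\}$ and the length-$1$ arm is $[-2]$ or $[-4]$.

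Finally I would run the finite enumeration. With $a_0=1$ the condition is
\[
(4a-1)\,\alpha_2\alpha_3\alpha_4-2a\bigl(\omega_2\alpha_3\alpha_4+\omega_3\alpha_2\alpha_4+\omega_4\alpha_2\alpha_3\bigr)=1 ,
\]
and the estimates above also force: for $a=2$ all three inner weights are $-2$, and for $a=1$ exactly two are. For $a=2$ this reduces to $\sum_{i\ge 2}1/\alpha_i=\tfrac12-1/\bigl(2\prod_{i\ge 2}\alpha_i\bigr)$ with the $\alpha_i$ pairwise coprime and $\equiv 3\pmod 4$; clearing denominators forces one $\alpha_i=3$ and then $(\alpha_j-6)(\alpha_k-6)=37$, so $\{\alpha_2,\alpha_3,\alpha_4\}=\{3,7,43\}$ and $\Sigma=\Sigma(3,4,7,43)$. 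For $a=1$, writing the arm whose inner weight is $-2m$ ($m\ge 2$) as $[2m,2k]$, one bounds $m\le 3$, rules out $m=3$, and for $m=2$ reduces the equation---after noting that one of the other two $\alpha_i$ must equal $3$---to $\alpha=6+\tfrac{5}{2k-1}$, which gives $k\in\{1,3\}$ and $\{\alpha_2,\alpha_3,\alpha_4\}=\{3,7,11\}$ or $\{3,7,23\}$, i.e.\ $\Sigma=\Sigma(2,3,7,11)$ or $\Sigma(2,3,7,23)$.

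The main obstacle is this last step. The correction terms $1/\prod_i\alpha_i$ in the homology-sphere equation are of the same order as the gaps between the competing candidate values---$\Sigma(2,3,7,11)$ in particular sits right on the boundary of several of the crude inequalities used above---so the classification cannot be closed by size estimates alone; one must clear denominators and solve the resulting equations exactly, while carefully tracking the two possible gluings of each length-$2$ arm to the central vertex and the coprimality constraints among the $\alpha_i$. It is this bookkeeping, rather than any single hard idea, that carries the weight of the proof.
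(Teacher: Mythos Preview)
Your approach is correct and genuinely different from the paper's. The paper never invokes Seifert invariants: after listing the seven partitions of $7$ into at least four parts, it asserts (without the parity argument you give) that the determinant of the even plumbing is even in every case except $(2,2,2,1)$, and then treats the determinant $D(a,b,c,d,e,f,g,h)$ of the $(2,2,2,1)$ graph as an integer polynomial in the eight half-weights. The reduction to finitely many cases is done by brute monotonicity---computing partial derivatives $\tilde D_b,\tilde D_f,\ldots$ and showing they are positive, so that $\tilde D$ can be evaluated at the smallest admissible value of each variable in turn---culminating in five short lemmas that isolate the three solutions. Your route via the homology-sphere equation $b_0-\sum_i\omega_i/\alpha_i=1/\prod_i\alpha_i$ and Egyptian-fraction style bounds on the contributions $\omega_i/\alpha_i$ is more conceptual and explains \emph{why} the arm-type must be $(1,2,2,2)$: the continued-fraction parity $\alpha_i\equiv n_i+1\pmod 2$ together with pairwise coprimality is exactly the mechanism behind the paper's parity claim on determinants. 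The trade-off is that the paper's determinant method is entirely self-contained once the incidence matrix is written down, whereas your argument imports the Seifert--plumbing dictionary.

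One small slip: your bound ``contribution $<1/4$ when the inner weight is $\le -4$'' is false as stated---the arm $[4,2]$ has $\omega/\alpha=2/7>1/4$. The correct bound is $\omega/\alpha\le 2/7$ (attained at $[4,2]$), and this weaker estimate still suffices everywhere you use it: for $a=2$ one gets $\sum_{i\ge 2}\omega_i/\alpha_i\le \tfrac23+\tfrac47+\tfrac27=\tfrac{32}{21}<\tfrac74$, forcing all inner weights $-2$; for $a=1$ with at most one inner weight $-2$ one gets $\sum\le\tfrac23+\tfrac27+\tfrac27=\tfrac{26}{21}<\tfrac32$, forcing exactly two inner weights $-2$. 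With this correction your outline goes through, and in fact your estimates already give $m\le 2$ directly (the case $m=3$ need not be treated separately): for $m\ge 3$ one has $2k/(4mk-1)\le 2/11$ while the right-hand side is at least $\tfrac12-\tfrac{5}{21}-\tfrac{1}{462}=\tfrac{20}{77}>\tfrac{2}{11}$.
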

\begin{proof}
The partitions of $7$, the number of whose parts is more than $4$ has the following $7$ types.
$(4,1,1,1)$, $(3,2,1,1)$, $(2,2,2,1)$, $(3,1,1,1,1)$, $(2,2,1,1,1)$, $(2,1,1,1,1,1,1)$, and $(1,1,1,1,1,1,1)$.

The determinants $D=D(a,b,c,d,e,f,g,h)$ of those matrices with even intersection form are even unless $(2,2,2,1)$.
We may assume the type $(2,2,2,1)$.
\begin{figure}[htbp]
\begin{center}
\unitlength 0.1in
\begin{picture}( 16.0600, 14.1000)( 13.9700,-24.4000)
%
\special{pn 20}%
\special{sh 1}%
\special{ar 2200 2000 10 10 0  6.28318530717959E+0000}%
%
\special{pn 20}%
\special{sh 1}%
\special{ar 2200 2400 10 10 0  6.28318530717959E+0000}%
%
\special{pn 20}%
\special{sh 1}%
\special{ar 1400 2000 10 10 0  6.28318530717959E+0000}%
%
\special{pn 20}%
\special{sh 1}%
\special{ar 1800 2000 10 10 0  6.28318530717959E+0000}%
%
\special{pn 20}%
\special{sh 1}%
\special{ar 2200 1200 10 10 0  6.28318530717959E+0000}%
%
\special{pn 20}%
\special{sh 1}%
\special{ar 2200 1600 10 10 0  6.28318530717959E+0000}%
%
\special{pn 20}%
\special{sh 1}%
\special{ar 3000 2000 10 10 0  6.28318530717959E+0000}%
%
\special{pn 20}%
\special{sh 1}%
\special{ar 2600 2000 10 10 0  6.28318530717959E+0000}%
%
\special{pn 4}%
\special{pa 2200 1200}%
\special{pa 2200 2400}%
\special{fp}%
\special{pa 1400 2000}%
\special{pa 3000 2000}%
\special{fp}%
\put(22.4000,-22.0000){\makebox(0,0)[lb]{$-2a$}}%
\put(26.0000,-22.0000){\makebox(0,0)[lb]{$-2b$}}%
\put(30.0000,-22.0000){\makebox(0,0)[lb]{$-2c$}}%
\put(23.0000,-16.0000){\makebox(0,0)[lb]{$-2d$}}%
\put(23.0000,-12.0000){\makebox(0,0)[lb]{$-2e$}}%
\put(18.0000,-19.3000){\makebox(0,0)[lb]{$-2f$}}%
\put(14.0000,-19.3000){\makebox(0,0)[lb]{$-2g$}}%
\put(22.4000,-24.4000){\makebox(0,0)[lt]{$-2h$}}%
\end{picture}%
\caption{Resolution graph with type $(2,2,2,1)$.}
\label{2221}
\end{center}
\end{figure}
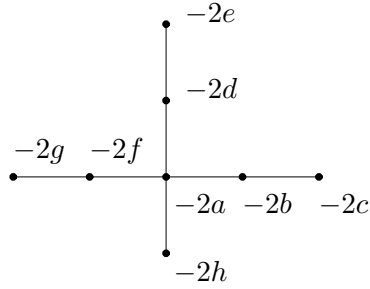
The parameter $a,b,c,d,e,f,g,h$ are all positive numbers.
We put $\tilde{D}=(D-1)/4$.
The unimodular condition is equivalent to $\tilde{D}=0$.
First, the central weight $-2a$ is $-2$ or $-4$ from the even, minimal condition.
We may assume that $b\le d\le f$ from symmetry of the graph.

{\bf [The case of $a=2$.]}
Suppose that $a=2$.
We can take coefficients of $D$ as follows:
$$\tilde{D}=16Nh+16(2h-1)bcdefg+P_1$$
$$N=(2bc-c-2)defg+(2de-e-2)bcfg+(2fg-g-2)bcde,$$
where $P_1$ is a positive integer for any parameter.

If $(b,c)\neq (1,1)$, then $2bc-c-2\ge 0$.
Hence we have $(b,c)=(1,1)$.
Then we have 
\begin{eqnarray*}
&&\tilde{D}(2,1,1,d,e,f,g,h)\\
&=&4((8d-3)e-5)fgh+4((8f-3)g-5)deh+12(h-1)defg+P_2\ge 0,
\end{eqnarray*}
where $P_2$ is a positive integer.

{\bf [The case of $a=1$.]}
Suppose that $a=1$.
$$\tilde{D}=16Nh+16(h-1)bcdefg+P_3$$
$$N=(bc-c-1)defg+(de-e-1)bcfg+(fg-g-1)bcde,$$
where $P_3$ is a positive integer.
We find the case where $N$ is a negative integer.

If $b\ge 2$, we have $bc-c-1,de-e-1,fg-g-1\ge 0$.
Thus $b=1$ holds.
\begin{eqnarray*}
N&=&c(de-e-1)fg+((c-1)fg-(g+1)c)de
\end{eqnarray*}
\begin{lem}
\label{claim0}
Suppose that $b=1$.
If $\tilde{D}=0$, then $d=1$ holds.
\end{lem}
\begin{proof}
Suppose that $d\ge 2$ and $c\ge2$.
We consider the function $\tilde{D}$ with $b=1$.
The partial differential 
\begin{eqnarray*}
\tilde{D}_d&=&48cefgh-16efgh-16cegh-16cdfg-12ceh+P_3\\
&=&16egh((c-1)(f-1)-1)+32cefgh-16cdfg-12ceh+P_3\ge 0.\\
\end{eqnarray*}
Thus we put $\tilde{D}\ge \tilde{D}|_{d=2}:=\tilde{D}_1$.
The partial differential
\begin{eqnarray*}
(\tilde{D}_1)_f&=&80cegh-28egh-32ceg-12cgh+P_4>0.
\end{eqnarray*}
Thus $\tilde{D}\ge (\tilde{D}_1)|_{f=2}=:\tilde{D}_2$.
The partial differential
\begin{eqnarray*}
(\tilde{D}_2)_u&=&128egh-64eg-20gh-20eh+P_5>0.
\end{eqnarray*}
Thus $\tilde{D}\ge \tilde{D}_2|_{c=2}=208egh-112eg-33eh-33gh+P_6>0$.
Here $P_i$ is a positive function.
Therefore, in the case of $d\ge 2$ and $c\ge 2$, we cannot find any solution.

Suppose that $d\ge 2$ and $c=1$.
We consider the function $\tilde{D}$ with $b=c=1$.
Then, by iterating the differential, we get
$\tilde{D}\ge 80egh-48eg-13eh-13gh+P_7$, where $P_7$ is a positive function.
Hence $\tilde{D}> 0$.
Therefore when $b=1$, $d=1$ holds.\qed
\end{proof}
Here we assume $c\le e$.
\begin{lem}
\label{claim1}
Suppose that $b=d=1$.
If $\tilde{D}=0$, then $f\le 2$ must hold.
\end{lem}
\begin{proof}
We prove that when $f\ge 3$ holds, $\tilde{D}$ cannot be $0$.
We deal with $\tilde{D}$ as a function with variables $c,e,f,g,h$.
The polynomial $P_i$ appeared in the context below is a positive function.

(A).
Suppose that $e,c\ge 2$.
Then we have $ec-c-e\ge0$.
We consider the function $\tilde{D}$ with $b=d=1$.
The differential $\tilde{D}_f$ is
\begin{eqnarray*}
\tilde{D}_f&=&32cegh-12egh-12cgh-16ceg+P_8\\
&=&12gh(ce-e-c)+20cegh-16ceg-8gh+P_8.\\
\end{eqnarray*}
In this case $\tilde{D}\ge \tilde{D}|_{f=3}:=\tilde{D}_1$
Since the differential $(\tilde{D}_1)_{h}$ is
\begin{eqnarray*}
(\tilde{D}_1)_{h}&=&80ceg-32g-32cg+11g-8ce+3e+3c-1>0,
\end{eqnarray*}
we have $(\tilde{D}_1)\ge \tilde{D}_1\ge \tilde{D}_1|_{h=1}:=\tilde{D}_{2}$
Since the differential $(\tilde{D}_2)_{c}=32eg-20g-4e+2>0$
$\tilde{D}_3:=\tilde{D}_2|_{c=1}=12eg-12g-2e+1=(e-1)(12g-2)-3>0$
Thus we have $\tilde{D}\ge \tilde{D}_1\ge \tilde{D}_2\ge \tilde{D}_3>0$.

(B).
Suppose that $1=c<e$.
We consider $\tilde{D}$ as the function restricted on $b=c=d=1$.
Then we have $\tilde{D}_f=20egh-12eg-8gh+3g>0$.
Thus $\tilde{D}\ge \tilde{D}|_{f=3}:=\tilde{D}_1$ holds.
Since we have $(\tilde{D}_1)_h=48eg-21g-5e+2>0$, $\tilde{D}\ge (\tilde{D}_1)|_{h=1}=:\tilde{D}_2$
Then $\tilde{D}_2=12eg-12g-2e+1=(12g-2)(e-1)-3>0$.
Thus $\tilde{D}>0$.

(C).
Suppose that $e=c=1$.
Then $\tilde{D}=12fgh-9gh-3h-9fg+2$.
This cannot be $0$ for any integers $f,g,h$.
\qed
\end{proof}

\begin{lem}
\label{claim2}
Suppose that $b=d=1$ and $f=2$.
If $\tilde{D}=0$, then $h=1$ holds.
\end{lem}
\begin{proof}
Suppose that $f=2$ and $h\ge2$.
Since 
$\tilde{D}_h=48ceg-20eg-20cg-8ce+P_{11}>0$,
we have
$$\tilde{D}\ge\tilde{D}|_{h=2}=64ceg-32eg-32cg-12ce+12g+5c+5e-2=:\tilde{D}_1.$$
Since $(\tilde{D}_1)_c=64eg-32g-12e+5>0$, $\tilde{D}_1\ge \tilde{D}_1|_{c=1}=32eg-20g-7e+3>0$.
Therefore $\tilde{D}>0$ holds.
\qed
\end{proof}

\begin{lem}
\label{claim3}
Suppose that $b=d=h=1$, $f=2$ and $c\le e$.
If $\tilde{D}=0$, then $(c,e,g)=(1,3,1)$ or $(1,2,3)$.
\end{lem}
\begin{proof}
If $b=d=h=1$, $f=2$, then
$$\tilde{D}=(12(c-1)(e-1)-7)g+4ce(g-1)+2c+2e-1$$
If $e\ge c\ge 2$, then $\tilde{D}>0$.
If $c=1$, then $\tilde{D}=4eg-7g-2e+1$.
The solution of $4eg-7g-2e+1=0$ is $(e,g)=(2,3),(3,1)$.
\qed
\end{proof}
These cases correspond to $\Sigma(2,3,7,11)$ and $\Sigma(2,3,7,23)$.

\begin{lem}
\label{claim4}
Suppose $b=d=f=1$.
If $\tilde{D}=0$, then $h=2$.
Furthermore if we assume $c\le e\le g$, then we have $(c,e,g)=(1,2,22)$.
\end{lem}
\begin{proof}
Suppose that $b=d=f=1$ and $h\ge 3$.
\begin{eqnarray*}
\tilde{D}_h&=&(16ce-8(c+e)+3)g-8ce+3c+3e-1\\
&\ge& 8ce-5(c+e)+2= 3(ce-1)+5(c-1)(e-1)\ge 0.
\end{eqnarray*}
Then we have $\tilde{D}\ge \tilde{D}|_{h=3}:=\tilde{D}_1$ and
$$\tilde{D}_1=32ceg-20cg-20eg-20ce+8c+8e+8g-3$$

(A).
If $c\ge 2$, then $g\ge e\ge 2$ and
$$(\tilde{D}_1)_c=32eg-20g-20e+8=20(e-1)(g-1)+12(eg-1)\ge 0$$
$$\tilde{D}_1\ge (\tilde{D}_1)_{c=2}=16(e-2)(g-2)+28(eg-2)+5>0$$

(B).
If $c=1$, and $g\ge e\ge 2$, then $\tilde{D}_1=12(e-1)(g-1)-7>0$, then
$\tilde{D}\ge \tilde{D}_1>0$.

(C).
If $c=1$ and $e=1$, then $\tilde{D}=3gh-3h-9g+2\neq 0$.

Suppose that $b=d=f=1$ and $h=1$.
Then $\tilde{D}=-4eg-4cg-4ce+2g+2e+2c-1\neq 0(2)$ holds.
Therefore If $b=d=f=1$, then we have $h=2$.

Suppose that $b=d=f=1$, $h=2$ and $c\ge 2$.
Then 
$$\tilde{D}_c=16eg-12e-12g+5\ge (4e-3)(4g-3)-4\ge 21>0,$$
$\tilde{D}\ge 20eg-19g-19e+8=e(20g-19)-19g+8\ge 21g-30>0$ holds.
Hence, $c=1$ holds.
The integer solution of $\tilde{D}=4eg-7g-7e+3=0$ is $(e,g)=(2,22)$.
\qed
\end{proof}
This case corresponds to $\Sigma(3,4,7,43)$.
From Lemma~\ref{claim0}, \ref{claim1}, \ref{claim2}, \ref{claim3} and \ref{claim4},
we get the proof of Proposition~\ref{Brieskon4sing}
\qed
\end{proof}

{\bf Proof of Theorem~\ref{resolutionofBrieskorn}.}
The rest part in the assertion is the computation of $\frak{ds}$.
From the inequality (\ref{spinos}) by Ozsvath and Szabo, the required assertion follows.
In fact, the $d$-invariants of those Brieskorn homology 3-spheres below are all $2$ by N\'emethi's algorithm in \cite{N},
$$\Sigma(2,3,5),\ \Sigma(3,4,7),\ \Sigma(2,3,7,11),\ \Sigma(2,3,7,23),\ \Sigma(3,4,7,43).$$
Hence, these manifolds are all $g_8=1$.
\qed

In the following, we prove Theorem~\ref{examnu}.

{\bf Proof of Theorem~\ref{examnu}.}
The minimal resolution graphs of $\Sigma(4n-2,4n-1,8n-3)$, $\Sigma(4n-1,4n,8n-1)$,
$\Sigma(4n-2,4n-1,8n^2-4n+1)$, and $\Sigma(4n-1,4n,8n^2-1)$ are {\sc Figure}~\ref{t1} and ~\ref{t2}.
The numbers of the parentheses are the lengths of the branches.
\begin{figure}[htbp]\begin{center}
{\unitlength 0.1in%
\begin{picture}( 46.9000, 18.7000)(  0.0000,-18.7000)%
%
\special{pn 4}%
\special{sh 1}%
\special{ar 800 1200 16 16 0  6.28318530717959E+0000}%
%
\special{pn 4}%
\special{sh 1}%
\special{ar 1800 1600 16 16 0  6.28318530717959E+0000}%
%
\special{pn 4}%
\special{sh 1}%
\special{ar 1400 1600 16 16 0  6.28318530717959E+0000}%
%
\special{pn 4}%
\special{sh 1}%
\special{ar 1000 1600 16 16 0  6.28318530717959E+0000}%
%
\special{pn 4}%
\special{sh 1}%
\special{ar 1800 1200 16 16 0  6.28318530717959E+0000}%
%
\special{pn 4}%
\special{sh 1}%
\special{ar 1200 1200 16 16 0  6.28318530717959E+0000}%
%
\special{pn 4}%
\special{sh 1}%
\special{ar 1600 800 16 16 0  6.28318530717959E+0000}%
%
\special{pn 4}%
\special{sh 1}%
\special{ar 1000 800 16 16 0  6.28318530717959E+0000}%
%
\special{pn 4}%
\special{pa 800 1200}%
\special{pa 1000 800}%
\special{fp}%
%
\special{pn 4}%
\special{pa 800 1200}%
\special{pa 1000 1600}%
\special{fp}%
\put(9.3000,-7.5000){\makebox(0,0)[lb]{$-2$}}%
\put(15.3000,-7.5000){\makebox(0,0)[lb]{$-2$}}%
\put(11.3000,-11.5000){\makebox(0,0)[lb]{$-2$}}%
\put(17.3000,-11.5000){\makebox(0,0)[lb]{$-2$}}%
%
\special{pn 8}%
\special{pa 1200 800}%
\special{pa 1400 800}%
\special{dt 0.045}%
%
\special{pn 8}%
\special{pa 1400 1200}%
\special{pa 1600 1200}%
\special{dt 0.045}%
\put(10.2000,-15.5000){\makebox(0,0)[lb]{$-2n$}}%
\put(14.2000,-15.6000){\makebox(0,0)[lb]{$-2$}}%
\put(17.8000,-15.6000){\makebox(0,0)[lb]{$-2$}}%
%
\special{pn 4}%
\special{sh 1}%
\special{ar 2200 1600 16 16 0  6.28318530717959E+0000}%
%
\special{pn 8}%
\special{pa 1000 1600}%
\special{pa 2200 1600}%
\special{fp}%
\put(21.8000,-15.6000){\makebox(0,0)[lb]{$-2$}}%
%
\special{pn 8}%
\special{pa 1600 800}%
\special{pa 1400 800}%
\special{fp}%
%
\special{pn 8}%
\special{pa 1800 1200}%
\special{pa 1600 1200}%
\special{fp}%
\put(7.3000,-12.0000){\makebox(0,0)[rb]{$-2$}}%
%
\put(24.0000,-12.0000){\makebox(0,0){($4n-2$)}}%
\put(10.0000,-20.0000){\makebox(0,0)[lb]{$\Sigma(4n-2,4n-1,8n-3)$}}%
%
\special{pn 4}%
\special{sh 1}%
\special{ar 3300 1200 16 16 0  6.28318530717959E+0000}%
%
\special{pn 4}%
\special{sh 1}%
\special{ar 3900 1600 16 16 0  6.28318530717959E+0000}%
%
\special{pn 4}%
\special{sh 1}%
\special{ar 3500 1600 16 16 0  6.28318530717959E+0000}%
%
\special{pn 4}%
\special{sh 1}%
\special{ar 4300 1200 16 16 0  6.28318530717959E+0000}%
%
\special{pn 4}%
\special{sh 1}%
\special{ar 3700 1200 16 16 0  6.28318530717959E+0000}%
%
\special{pn 4}%
\special{sh 1}%
\special{ar 4100 800 16 16 0  6.28318530717959E+0000}%
%
\special{pn 4}%
\special{sh 1}%
\special{ar 3500 800 16 16 0  6.28318530717959E+0000}%
%
\special{pn 4}%
\special{pa 3300 1200}%
\special{pa 3500 800}%
\special{fp}%
%
\special{pn 4}%
\special{pa 3300 1200}%
\special{pa 3500 1600}%
\special{fp}%
\put(34.3000,-7.5000){\makebox(0,0)[lb]{$-2$}}%
\put(40.3000,-7.5000){\makebox(0,0)[lb]{$-2$}}%
\put(36.3000,-11.5000){\makebox(0,0)[lb]{$-2$}}%
\put(42.3000,-11.5000){\makebox(0,0)[lb]{$-2$}}%
%
\special{pn 8}%
\special{pa 3700 800}%
\special{pa 3900 800}%
\special{dt 0.045}%
%
\special{pn 8}%
\special{pa 3900 1200}%
\special{pa 4100 1200}%
\special{dt 0.045}%
\put(35.2000,-15.5000){\makebox(0,0)[lb]{$-2n$}}%
\put(39.2000,-15.6000){\makebox(0,0)[lb]{$-2$}}%
%
\special{pn 8}%
\special{pa 4100 800}%
\special{pa 3900 800}%
\special{fp}%
%
\special{pn 8}%
\special{pa 4300 1200}%
\special{pa 4100 1200}%
\special{fp}%
\put(32.3000,-12.0000){\makebox(0,0)[rb]{$-2$}}%
\put(47.0000,-8.0000){\makebox(0,0){($4n-2$)}}%
\put(49.0000,-12.0000){\makebox(0,0){($4n-1$)}}%
\put(35.0000,-20.0000){\makebox(0,0)[lb]{$\Sigma(4n-1,4n,8n-1)$}}%
%
\special{pn 8}%
\special{pa 3460 1600}%
\special{pa 3890 1600}%
\special{fp}%
\put(22.0000,-8.0000){\makebox(0,0){($4n-3$)}}%
%
\special{pn 8}%
\special{pa 1000 800}%
\special{pa 1200 800}%
\special{fp}%
%
\special{pn 8}%
\special{pa 800 1200}%
\special{pa 1400 1200}%
\special{fp}%
%
\special{pn 4}%
\special{pa 3690 800}%
\special{pa 3490 800}%
\special{fp}%
%
\special{pn 4}%
\special{pa 3300 1200}%
\special{pa 3900 1200}%
\special{fp}%
\end{picture}}
\begin{figure}[htbp]\begin{center}
{\unitlength 0.1in%
\begin{picture}( 41.8000, 12.5000)(  5.1000,-18.7000)%
%
\special{pn 4}%
\special{sh 1}%
\special{ar 800 1200 16 16 0  6.28318530717959E+0000}%
%
\special{pn 4}%
\special{sh 1}%
\special{ar 1000 1600 16 16 0  6.28318530717959E+0000}%
%
\special{pn 4}%
\special{sh 1}%
\special{ar 1600 1200 16 16 0  6.28318530717959E+0000}%
%
\special{pn 4}%
\special{sh 1}%
\special{ar 1200 1200 16 16 0  6.28318530717959E+0000}%
%
\special{pn 4}%
\special{sh 1}%
\special{ar 1000 800 16 16 0  6.28318530717959E+0000}%
%
\special{pn 4}%
\special{pa 800 1200}%
\special{pa 1000 800}%
\special{fp}%
%
\special{pn 4}%
\special{pa 800 1200}%
\special{pa 1000 1600}%
\special{fp}%
\put(9.8000,-15.5000){\makebox(0,0)[lb]{$-2$}}%
\put(13.3000,-15.5000){\makebox(0,0)[lb]{$-2$}}%
\put(11.3000,-11.5000){\makebox(0,0)[lb]{$-2n$}}%
\put(15.3000,-11.5000){\makebox(0,0)[lb]{$-2$}}%
\put(7.3000,-12.0000){\makebox(0,0)[rb]{$-2$}}%
\put(23.0000,-16.0000){\makebox(0,0){($4n$)}}%
\put(8.0000,-20.0000){\makebox(0,0)[lb]{$\Sigma(4n-2,4n-1,8n^2-4n+1)$}}%
%
\special{pn 4}%
\special{sh 1}%
\special{ar 2900 1200 16 16 0  6.28318530717959E+0000}%
%
\special{pn 4}%
\special{sh 1}%
\special{ar 3100 1600 16 16 0  6.28318530717959E+0000}%
%
\special{pn 4}%
\special{sh 1}%
\special{ar 3900 1200 16 16 0  6.28318530717959E+0000}%
%
\special{pn 4}%
\special{sh 1}%
\special{ar 3300 1200 16 16 0  6.28318530717959E+0000}%
%
\special{pn 4}%
\special{sh 1}%
\special{ar 3500 800 16 16 0  6.28318530717959E+0000}%
%
\special{pn 4}%
\special{sh 1}%
\special{ar 3100 800 16 16 0  6.28318530717959E+0000}%
%
\special{pn 4}%
\special{pa 2900 1200}%
\special{pa 3100 800}%
\special{fp}%
%
\special{pn 4}%
\special{pa 2900 1200}%
\special{pa 3100 1600}%
\special{fp}%
\put(30.3000,-7.5000){\makebox(0,0)[lb]{$-2n$}}%
\put(34.3000,-7.5000){\makebox(0,0)[lb]{$-2$}}%
\put(32.3000,-11.5000){\makebox(0,0)[lb]{$-2$}}%
\put(38.3000,-11.5000){\makebox(0,0)[lb]{$-2$}}%
%
\special{pn 4}%
\special{pa 3500 1200}%
\special{pa 3700 1200}%
\special{dt 0.045}%
%
\special{pn 4}%
\special{pa 3900 1200}%
\special{pa 3700 1200}%
\special{fp}%
\put(28.3000,-12.0000){\makebox(0,0)[rb]{$-2$}}%
\put(49.0000,-15.9000){\makebox(0,0){($4n-2$)}}%
\put(47.0000,-11.9500){\makebox(0,0){($4n-1$)}}%
\put(31.0000,-20.0000){\makebox(0,0)[lb]{$\Sigma(4n-1,4n,8n^2-1)$}}%
%
\special{pn 4}%
\special{sh 1}%
\special{ar 1400 1600 16 16 0  6.28318530717959E+0000}%
%
\special{pn 4}%
\special{sh 1}%
\special{ar 1800 1600 16 16 0  6.28318530717959E+0000}%
%
\special{pn 4}%
\special{pa 1800 1600}%
\special{pa 1400 1600}%
\special{fp}%
\put(17.3000,-15.5000){\makebox(0,0)[lb]{$-2n$}}%
%
\special{pn 4}%
\special{sh 1}%
\special{ar 1620 800 16 16 0  6.28318530717959E+0000}%
\put(9.5000,-7.5000){\makebox(0,0)[lb]{$-2$}}%
\put(15.5000,-7.5000){\makebox(0,0)[lb]{$-2$}}%
%
\special{pn 8}%
\special{pa 1220 800}%
\special{pa 1420 800}%
\special{dt 0.045}%
%
\special{pn 4}%
\special{pa 1620 800}%
\special{pa 1420 800}%
\special{fp}%
\put(22.2000,-7.9500){\makebox(0,0){($4n-3$)}}%
%
\special{pn 4}%
\special{pa 800 1200}%
\special{pa 1600 1200}%
\special{fp}%
%
\special{pn 4}%
\special{pa 1000 1600}%
\special{pa 1100 1600}%
\special{fp}%
%
\special{pn 4}%
\special{pa 1100 1600}%
\special{pa 1300 1600}%
\special{dt 0.045}%
%
\special{pn 4}%
\special{pa 1300 1600}%
\special{pa 1400 1600}%
\special{fp}%
%
\special{pn 8}%
\special{pa 3100 800}%
\special{pa 3500 800}%
\special{fp}%
\put(30.9000,-15.5000){\makebox(0,0)[lb]{$-2$}}%
\put(34.4000,-15.5000){\makebox(0,0)[lb]{$-2$}}%
%
\special{pn 4}%
\special{sh 1}%
\special{ar 3510 1600 16 16 0  6.28318530717959E+0000}%
%
\special{pn 4}%
\special{sh 1}%
\special{ar 3910 1600 16 16 0  6.28318530717959E+0000}%
%
\special{pn 4}%
\special{pa 3910 1600}%
\special{pa 3510 1600}%
\special{fp}%
\put(38.4000,-15.5000){\makebox(0,0)[lb]{$-2n-2$}}%
%
\special{pn 4}%
\special{pa 3110 1600}%
\special{pa 3210 1600}%
\special{fp}%
%
\special{pn 4}%
\special{pa 3210 1600}%
\special{pa 3410 1600}%
\special{dt 0.045}%
%
\special{pn 4}%
\special{pa 3410 1600}%
\special{pa 3510 1600}%
\special{fp}%
%
\special{pn 8}%
\special{pa 1000 800}%
\special{pa 1200 800}%
\special{fp}%
%
\special{pn 8}%
\special{pa 2900 1200}%
\special{pa 3500 1200}%
\special{fp}%
\end{picture}}
\qed

The intersection forms of these minimal resolution grpahs are not isomorphic to $n(-E_8)$ for $n>1$.
We do not know whether the homology 3-spheres have other boundings with $g_8=n$ and $\epsilon=-1$.

\subsection{Blow-downs of the minimal resolution.}
\label{definiteinvariant}
In general, any minimal resolution is a negative-definite bounding with possibly not even.
But there are some $-1$-spheres in the bounding 4-manifold.
By performing blow-downs of the spheres we can get a smaller bounding.
The new bounding is not a resolution any more.
In this section, we give several $E_8$-boundings with $g_8=1$ and $\epsilon=-1$ by using the blow-down of
the minimal resolutions of Brieskorn homology 3-spheres.
These strategies can be also seen in \cite{NR}.

The blow-down process of a Brieskorn homology 3-sphere is described as follows.
As an example, let us consider a plumbing graph with $3$ singular fibers as in the first diagram in {\sc Figure}~\ref{blow-down}.
By doing the blow-down at the central component, we get the next diagram.
The (unlabeled) edge presents the $+1$-linking between corresponding components.
In the next diagram, doing the further blow-down at the $-1$-framed component, we get the third diagram.
The integers nearby the edge are the linking number between the two components.
In the same way we get the fourth diagram.
Here the $(x+6)$-framed component is the $(2,3)$-torus knot.
Here we deal with the diagram as in the left of {\sc Figure}~\ref{Explumb}.
This diagram stands for the handle diagram in the right of the {\sc Figure}~\ref{Explumb}.
In this paper such a graph is called {\it a configuration} and
any graph obtained by several blow-downs of a Seifert plumbing graph is called {\it a blow-downed configuration}.
The integer associated with any vertex is called a {\it weight} and with any edge is called a {\it label}.
The incidence matrix for the configuration naturally gives the quadratic form for the bounding 4-manifold.

Each step of the blow-down performances is based on the formula in {\sc Figure}~\ref{bdformula}.
Here, if the $x$-framed component in the figure is the $(a,b)$-torus knot, then the next $(x+b^2)$-framed component is the $(a+b,b)$-torus knot.
\begin{figure}[htbp]
\begin{center}
\includegraphics{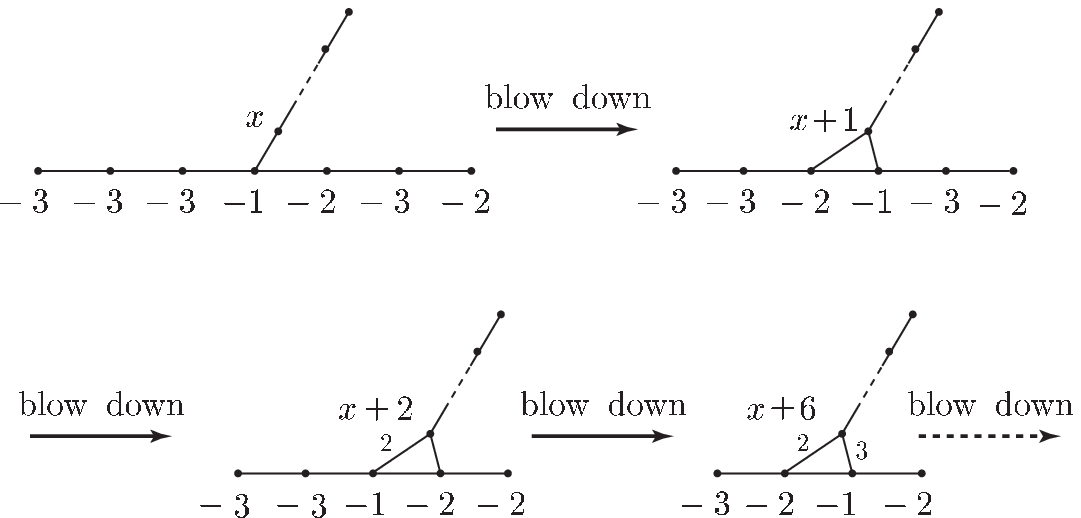}
\caption{Blow-down process.}
\label{blow-down}
\includegraphics{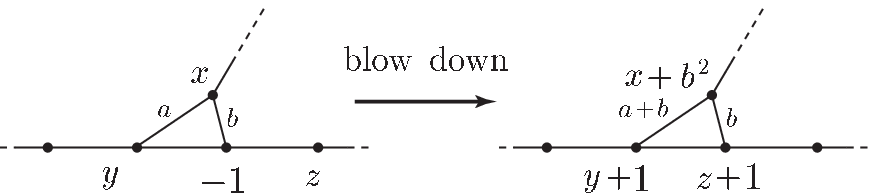}
\caption{A blow-down formula on configurations.}
\label{bdformula}
\end{center}
\end{figure}

Let $G_0$ be a 1-cycled graph with three edges with labels $\{a,b,1\}$.
The vertex intersecting two edges with $a,b$ is $-2c$.
The graph $G$ is the union of $G_0$ and linear edges connecting the three vertices.
See the right of {\sc Figure}~\ref{Explumb}.
We call the graph $G$ a {\it branched triangular configuration}.
\begin{prop}
\label{table1}
Let $G$ be a branched triangular configuration.
The pair $(G;a,b,c)$ with $\gcd(a,b)=1$ in Table~\ref{compconfi} is the blow-downed configurations with type (1) to (7) in {\sc Figure}~\ref{12possible},
whose intersection form is presented by $-E_8$.
\end{prop}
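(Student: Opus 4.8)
The plan is to treat the seven combinatorial types of {\sc Figure}~\ref{12possible} uniformly and reduce the statement to a single determinant computation. Each branched triangular configuration in the figure has exactly eight vertices, all weighted $-2$ except the distinguished vertex weighted $-2c$, and by construction it is obtained from the minimal resolution of a Brieskorn singularity (a negative-definite plumbing) by the blow-down moves of {\sc Figures}~\ref{blow-down} and~\ref{bdformula}; since a blow-down replaces $Q$ by an orthogonal complement of a $(-1)$-class, the incidence matrix $M(a,b,c)$ of the configuration is again negative-definite. Every diagonal entry of $M(a,b,c)$ is even (a cycle edge only contributes off-diagonal entries, so evenness is unaffected), hence the form is even. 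Therefore $M(a,b,c)$ presents $-E_8$ if and only if it is unimodular, i.e. $|\det M(a,b,c)| = 1$, the form being then the unique rank-$8$ even negative-definite unimodular lattice. So it remains (a) to solve $|\det M(a,b,c)|=1$ and match the solutions with Table~\ref{compconfi}, and (b) to confirm that each listed triple genuinely arises as a blow-downed configuration.

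First I would compute $\det M(a,b,c)$ for each type by expanding along the three edges of the cycle: this writes the determinant as an explicit $\Z$-linear combination of determinants of the linear $-2$-chains hanging off the triangle, and each chain of $n$ vertices of weight $-2$ contributes the elementary continued-fraction factor coming from $[2,2,\dots,2]$. For type $(1)$, for instance, the cycle block $\begin{pmatrix}-2c & a & b\\ a & -2 & 1\\ b & 1 & -2\end{pmatrix}$ alone has determinant $2(a^2+ab+b^2)-6c$, and the attached chains shift the coefficients to the values encoded in the table. In every case the result has the shape $\det M(a,b,c) = -A(a,b)\,c + B(a,b)$ with $A$ linear and $B$ quadratic in $a,b$, so $|\det M|=1$ forces $c = (B(a,b)\mp 1)/A(a,b)$. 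The linear substitution $a,b\mapsto$ (the displayed expressions in $k,\ell$) is precisely the parametrization of the locus on which $A(a,b)$ divides $B(a,b)\mp 1$; feeding it back in makes $c$ the displayed quadratic in $k,\ell$. Along the way one checks that the $\pm/\mp$ choices are the only ones compatible with negative-definiteness and that $\gcd(a,b)=1$ holds, so that the boundary is indeed a Brieskorn homology sphere.

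Second I would verify the blow-downed-configuration claim by running the blow-down formula of {\sc Figure}~\ref{bdformula} in reverse. Each reverse move blows up, replacing an $(x+b^2)$-framed $(a+b,b)$-torus-knot vertex by a $(-1)$-framed vertex together with an $x$-framed $(a,b)$-torus-knot vertex; after finitely many such moves the cycle opens into a star-shaped tree. I would then check that the resulting graph is negative-definite with central weight $\le -1$ and all remaining weights $\le -2$, hence minimal in the sense of the definition, and read off its Seifert invariants to identify the Brieskorn sphere $\Sigma(p,q,r)$; this is exactly how the entries of Table~\ref{BdofNegDef} and the statement of Theorem~\ref{examples} are produced. (One can cross-check a few cases with N\'emethi's algorithm or the inequality~(\ref{os}).)

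The main obstacle will be the bookkeeping rather than any single idea: getting the signs of the off-diagonal entries $\pm a,\pm b,\pm 1$ in $M(a,b,c)$ right, since a $1$-cycle—unlike a tree—genuinely contributes to the quadratic form; carrying out the cycle expansion and continued-fraction reduction for all seven types and all thirteen rows of Table~\ref{compconfi}; and checking in each case that the sequence of reverse blow-ups terminates at a minimal negative-definite Seifert graph. Each individual step is routine, but organizing the computations so that the $(k,\ell)$-parametrizations drop out cleanly is the substantive part of the proof.
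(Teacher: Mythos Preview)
Your plan coincides with the paper's: it too computes the determinant for each of the seven types (obtaining for instance $3a^2+4ab+3b^2=5c-2$ in case~(1)), records the $(k,\ell)$-parametrized integral solutions as Table~\ref{compconfi}, and then blows up via the Euclidean algorithm (Figure~\ref{intore}) to recover a minimal star-shaped plumbing, confirming that each triple is genuinely a blow-downed configuration. One small correction: the coefficient of $c$ in $\det M(a,b,c)$ is a \emph{constant} (equal to $5$, $7$, or $16$ up to sign, depending on the type), not linear in $a,b$---the entries $a,b$ sit in the row and column of the $-2c$ vertex and hence disappear from its cofactor---so the divisibility condition you describe is simply a congruence on the quadratic $B(a,b)$, which is exactly what the $(k,\ell)$-substitution solves.
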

Note that configurations (1) to (7) is not all the blow-downed, branched triangular configurations with $-E_8$ intersection form.

\begin{figure}[htbp]
\begin{center}
\includegraphics{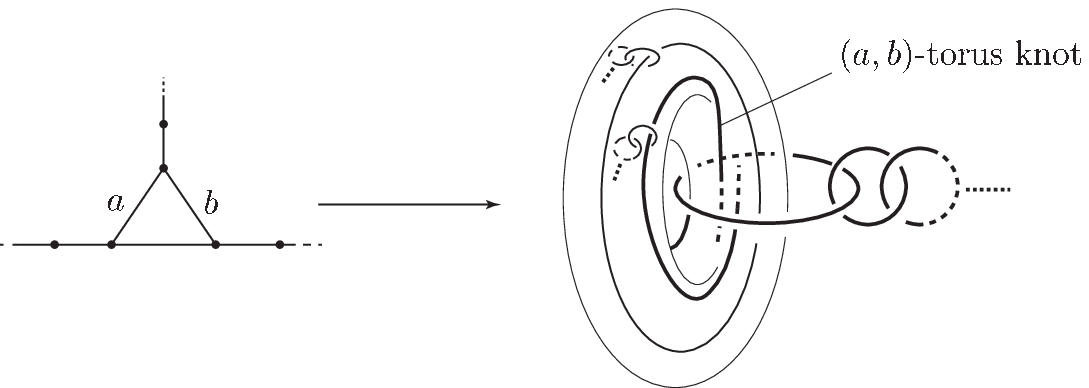}
\caption{The actual handle diagram with branched triangular configuration.}
\label{Explumb}
\end{center}
\end{figure}
\begin{proof}
We consider configurations in {\sc Figure}~\ref{12possible}.
The other branched triangular configuration with rank $8$ cannot be a unimodular form.
Computing the determinants of each configuration in {\sc Figure}~\ref{12possible}, we obtain the equations:
$$(1):3a^2+4ab+3b^2=5c-2;\ (2):3a^2+3ab+2b^2=5c-2$$
$$(3):6a^2+9ab+6b^2=7c-2;\ (4):6a^2+8ab+5b^2=7c-2$$
$$(5):5a^2+5ab+3b^2=7c-2;\ (6):15a^2+20ab+12b^2=16c-1$$
$$(7):12a^2+12ab+7b^2=16c-1$$
The positive integral solutions $\{a,b,c\}$ in these equations give the negative-definite $E_8$-boundings with configurations from (1) to (7).
If $a$ and $b$ are relatively prime, then these pairs $(G;a,b,c)$ are blow-downed configurations by iterating several blow-ups in
accordance with the Euclidean algorithm for relatively prime $(a,b)$ as {\sc Figure}~\ref{intore}.

Suppose that $\{a,b\}$ is a relatively prime solution with $a<b$.
Let $m$ denote the minimum positive number satisfying $b-ma<a$.
We iterate the blow-up process (the inverse of Figure~\ref{bdformula}) $m$-times at the left bottom angle in the triangle as in the first
configuration in {\sc Figure}~\ref{intore}.
Next, exchanging the role of $a$ and $b-ma$, we continue to perform the blow-up at the right bottom angle.
Applying the Euclidean algorighm to this blow-up process in this way, we obtain the star-shaped graph which all labels are $+1$
and all weights are smaller than or equal to $-2$.

In consequence, the pair $(a,b,c)$ in the Table~\ref{compconfi} with relatively prime $a,b$ can give a Brieskorn homology 3-sphere with
$E_8$-bounding with $\epsilon=-1$.
\qed
\end{proof}
\begin{figure}
\begin{center}
\includegraphics{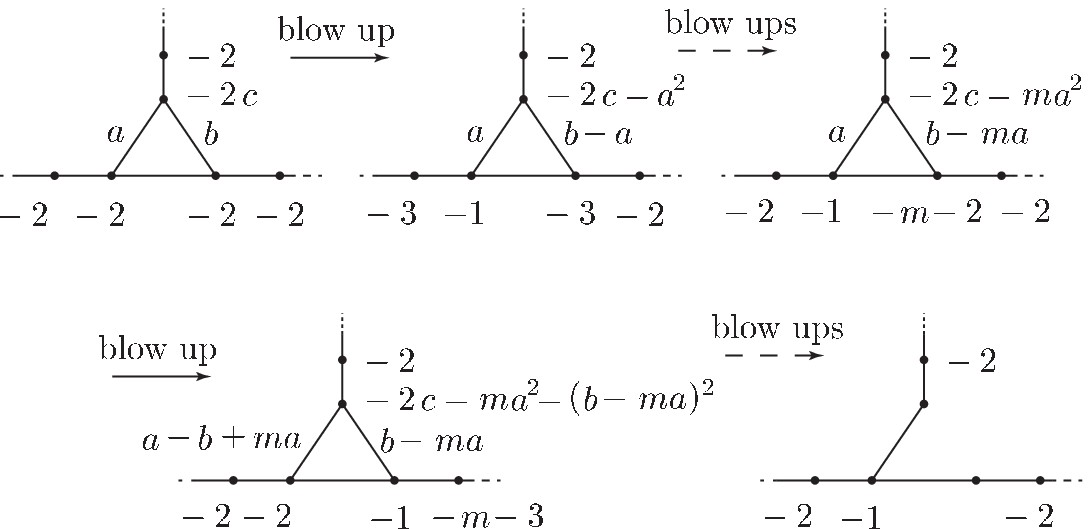}
\caption{Blow-up process by the Euclidean algorithm.}
\label{intore}
\end{center}
\end{figure}
\begin{prop}
\label{list(1)}
Let $G$ be the configuration (1) in Figure~\ref{12possible}.
The integral solutions $a,b,c$ in Table~\ref{compconfi} with $a\le 6$ are Table~\ref{e8bounding1}:\\
\end{prop}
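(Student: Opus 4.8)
The plan is to treat this as an explicit Diophantine enumeration and carry it out as a short finite case analysis. By Proposition~\ref{table1}, the triples occurring in row $(1)$ of Table~\ref{compconfi} are, with the two signs $\pm$ and with $k,\ell$ ranging over $\Z$, precisely a parametrization of the positive integral solutions of
\[
3a^2+4ab+3b^2=5c-2
\]
with $\gcd(a,b)=1$, i.e. of the negative-definite $E_8$-boundings of configuration type $(1)$. So the proposition amounts to singling out those solutions with $a\le 6$ and checking that the resulting list is exactly Table~\ref{e8bounding1}. Since both the quadratic form $3a^2+4ab+3b^2$ and the configuration $(1)$ are symmetric in $a\leftrightarrow b$, I may assume $1\le a\le b$ and let $a$ run over $\{1,2,3,4,5,6\}$.

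For fixed $a$ the divisibility $5\mid 3a^2+4ab+3b^2+2$ pins down $b$ modulo $5$: multiplying by $2$ and completing the square turns $3a^2+4ab+3b^2\equiv 3\pmod 5$ into $(b+4a)^2\equiv 1\pmod 5$, hence $b\equiv a+1$ or $b\equiv a-1\pmod 5$. These two residue classes match the upper and lower signs in row $(1)$ of Table~\ref{compconfi}: inverting the formulas there gives $k=(3a+2b-2)/5$, $\ell=(2a+3b+2)/5$ for the $+$ sign and $k=(3a+2b+2)/5$, $\ell=(2a+3b-2)/5$ for the $-$ sign, and each pair lies in $\Z^2$ exactly when the corresponding congruence ($b\equiv a+1$, resp. $b\equiv a-1$) holds. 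Thus for each of the six values of $a$ the admissible $b$ fill two arithmetic progressions modulo $5$; imposing $\gcd(a,b)=1$ refines each progression (to modulus $5a$, or, using parity, modulus $10$, deleting the $b$ that share a factor with $a$), and $c=(3a^2+4ab+3b^2+2)/5$ is then a quadratic polynomial in the running parameter. Writing these finitely many one-parameter families out, and re-expressing the parameter through the $(k,\ell)$ of Table~\ref{compconfi}, produces Table~\ref{e8bounding1}.

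The only delicate point is completeness. Three things must be checked: that both signs in row $(1)$ are genuinely needed and together exhaust all solutions (the inversion computation above); that for each $a\in\{1,\dots,6\}$ no residue class of $b$ and no coprimality sub-case is dropped; and that every triple produced actually satisfies the determinant equation (a direct substitution). I expect the second of these --- the bookkeeping of $\gcd(a,b)=1$, which for $a=4$ and $a=6$ splits a residue class into several surviving sub-families --- to be the main place errors can creep in; everything else is mechanical. Recovering the underlying Brieskorn sphere $\Sigma(p,q,r)$ from each triple $(a,b,c)$ via the Euclidean-algorithm blow-ups of Figure~\ref{intore}, as in the proof of Proposition~\ref{table1}, is a separate step not required for the statement itself.
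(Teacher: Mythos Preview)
Your approach is correct and essentially the same as the paper's: both fix $a\in\{1,\dots,6\}$ and enumerate, the paper by solving the linear constraint $3k-2\ell\pm2=a$ directly in the $(k,\ell)$-parametrization of Table~\ref{compconfi} (one free integer parameter remains, and $b,c$ are read off), you by working from the determinant equation $3a^2+4ab+3b^2=5c-2$ modulo $5$ and then inverting back to $(k,\ell)$; the two computations are equivalent. One small correction: drop the symmetry assumption $a\le b$, since Table~\ref{e8bounding1} records all positive $b$ for each fixed $a$ (for instance the row $a=5$, $b=5i-4$ at $i=1$ gives $b=1<a$), so restricting to $b\ge a$ would omit some listed rows even though no configuration is actually lost.
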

\begin{table}
$\begin{array}{|c|c|c||c|c|c|}\hline
a& b&  c & a & b & c\\\hline
1 & 5i-3 &  15i^2-14i+4 & 1 & 5i & 15i^2+4i+1\\\hline
2 & 10i-7 & 60i^2-68i+21 & 2 & 10i+1 & 60i^2+28i+5\\\hline
3 & 15i-11 & 135i^2-162i+52 & 3 & 15i-8 & 135i^2-108i+25\\\hline
3 & 15i-1 & 135i^2+18i+4 & 3 & 15i+2 & 135i^2+72i+13\\\hline
4 & 10i-5 & 60i^2-28i+9 & 4 & 10i-7 & 60i^2-52i+17\\\hline
5 & 5i-4 & 15i^2-4i+9 & 5 & 5i-1 & 15i^2-14i+12\\\hline
6 & 30i-23 & 540i^2-684i+229 & 6 & 30i-13 & 540i^2-324i+61\\\hline
6 & 30i-5 & 540i^2-36i+13 & 6 & 30i+5 & 540i^2+324i+61\\\hline
\end{array}$
\caption{The pairs $(a,b,c)$ ($i\ge 0$) are blow-downed configurations with (1) with $-E_8$ intersection form and with $a\le 6$.}
\label{e8bounding1}
\end{table}
\begin{proof}
Let us take $a=1$ in the case of (1).
Then for some integer $m$ we have $k=2m+1$, $\ell=3m\pm1+1$ and $b=5m+1\pm1$.
Thus we get 
$$c=15m^2+16m+5\text{, or }15m^2+4m+1$$
from Table~\ref{compconfi}.
In this way we get the expressions of $b,c$ as in Table~\ref{e8bounding1}.

Let $Y$ be one of the Brieskorn homology 3-spheres as above.
According to the formula of $\overline{\mu}$ in \cite{Fk}, we have $\overline{\mu}(Y)=-1$.
From Ue's inequality (Theorem~\ref{Umain}), we get ${\frak{ds}}=1$.
In particular they have $g_8=1$.

Thus, by using Theorem~\ref{Umain} we have $g_8=\overline{g_8}=\frak{ds}=\overline{\frak{ds}}=1$.
These data give Brieskorn homology 3-spheres as in Table~\ref{BdofNegDef}.
\qed
\end{proof}

\subsection{The negative $E_8$ boundings of $\Sigma(2,3,6n-1)$.}
We restrict ourselves to $\Sigma(2,3,6n\pm1)$.
Let denote $Y_{n}^-=\Sigma(2,3,6n-1)$ and $Y_{n}^+=\Sigma(2,3,6n-5)$.
The invariants $\mu$, $\overline{\mu}$ and $d$ for $Y^{\pm}_n$ are as in Table~\ref{236invariant}.
We focus on bounding 4-manifolds of $Y_{2k+1}^-$.
\begin{table}[htbp]
\begin{tabular}{|c|c|c|c|c|}\hline
&$\mu$& $\overline{\mu}$& $d$ & definite bounding \\\hline
$Y_{2k}^+$&$1$&$1$&$0$&$\frak{ds}=\infty$\\\hline
$Y_{2k}^-$&$0$&$0$&$2$&$\frak{ds}=\infty$\\\hline
$Y_{2k+1}^+$&$0$&$0$&$0$&must be $b_2(X)=0$\\\hline
$Y_{2k+1}^-$&$1$&$-1$&$2$&must be $b_2(X)=8$\\\hline
\end{tabular}
\caption{Invariants of $Y_n^{\pm}$.}
\label{236invariant}
\end{table}
The minimal resolution $R_{n}$ for $Y_n^-$ is Figure~\ref{236n-1}.
\begin{figure}[htbp]
\begin{center}
{\unitlength 0.1in%
\begin{picture}( 28.4300,  4.4500)(  7.2000, -8.2500)%
%
\special{pn 4}%
\special{sh 1}%
\special{ar 961 545 16 16 0  6.28318530717959E+0000}%
\special{sh 1}%
\special{ar 1222 545 16 16 0  6.28318530717959E+0000}%
\special{sh 1}%
\special{ar 1481 545 16 16 0  6.28318530717959E+0000}%
\special{sh 1}%
\special{ar 1741 545 16 16 0  6.28318530717959E+0000}%
\special{sh 1}%
\special{ar 2002 545 16 16 0  6.28318530717959E+0000}%
\special{sh 1}%
\special{ar 2261 545 16 16 0  6.28318530717959E+0000}%
%
\special{pn 4}%
\special{sh 1}%
\special{ar 1481 804 16 16 0  6.28318530717959E+0000}%
%
\special{pn 4}%
\special{pa 1481 545}%
\special{pa 1481 804}%
\special{fp}%
%
\special{pn 4}%
\special{sh 1}%
\special{ar 2781 545 16 16 0  6.28318530717959E+0000}%
%
\special{pn 4}%
\special{sh 1}%
\special{ar 2521 545 16 16 0  6.28318530717959E+0000}%
\put(16.1000,-4.4500){\makebox(0,0){$-2$}}%
\put(13.5000,-4.4500){\makebox(0,0){$-2$}}%
\put(10.9100,-4.4500){\makebox(0,0){$-2$}}%
\put(8.3000,-4.4500){\makebox(0,0){$-2$}}%
\put(13.5000,-8.9000){\makebox(0,0){$-2$}}%
\put(18.7100,-4.4500){\makebox(0,0){$-2$}}%
\put(21.3000,-4.4500){\makebox(0,0){$-2$}}%
\put(23.9000,-4.4500){\makebox(0,0){$-2$}}%
\put(26.5000,-4.4500){\makebox(0,0){$-3$}}%
\put(29.1000,-4.4500){\makebox(0,0){$-2$}}%
%
\special{pn 4}%
\special{sh 1}%
\special{ar 3040 540 16 16 0  6.28318530717959E+0000}%
%
\special{pn 8}%
\special{pa 961 545}%
\special{pa 3171 545}%
\special{fp}%
%
\special{pn 8}%
\special{pa 3171 545}%
\special{pa 3444 545}%
\special{dt 0.045}%
%
\special{pn 4}%
\special{sh 1}%
\special{ar 3563 547 16 16 0  6.28318530717959E+0000}%
%
\special{pn 8}%
\special{pa 3562 545}%
\special{pa 3432 545}%
\special{fp}%
\put(34.3000,-4.4500){\makebox(0,0){$-2$}}%
%
\special{pn 8}%
\special{pa 3040 610}%
\special{pa 3080 650}%
\special{fp}%
%
\special{pn 8}%
\special{pa 3080 650}%
\special{pa 3520 650}%
\special{fp}%
%
\special{pn 8}%
\special{pa 3560 610}%
\special{pa 3520 650}%
\special{fp}%
\put(31.3000,-8.0000){\makebox(0,0)[lb]{$n-1$}}%
\end{picture}}%
\caption{The minimal resolution graph of $Y_n^-$.}
\label{236n-1}
\end{center}
\end{figure}
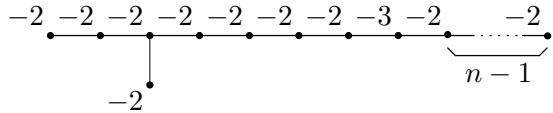
The intersection form of $R_{n}$ is isomorphic to $-E_8\oplus^{n-1}\langle-1\rangle$.
Any square $-1$ class in $R_{n}$ cannot be realized as a sphere,
in other words the following holds:
\begin{prop}
The 4-manifold $R_n$ can be never blow-downed any more.
Namely, the minimal genus of any square $-1$ class in $R_n$ is positive.
\end{prop}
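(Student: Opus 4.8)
The plan is to combine a short lattice computation with a blow-down argument. First I would isolate the classes that could possibly matter. Fix the splitting $Q_{R_n}\cong(-E_8)\oplus(n-1)\langle-1\rangle$ and write $e\in H_2(R_n;\mathbb Z)$ as $e=e_0+e_1$ accordingly. Since $-E_8$ is negative definite and even, $e_0^2$ is a non-positive even integer, while $e_1^2\le 0$; hence $e_0^2+e_1^2=-1$ forces $e_0=0$ and $e_1^2=-1$, so $e=\pm f_j$ for one of the standard generators $f_1,\dots,f_{n-1}$ of the $\langle-1\rangle$-summands. In particular $e^{\perp}\cong(-E_8)\oplus(n-2)\langle-1\rangle$, and the $-E_8$-sublattice of $Q_{R_n}$ (spanned by the $E_8$-subconfiguration of the resolution graph) is contained in $e^{\perp}$.

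Now suppose, for contradiction, that such an $e=[S]$ is carried by a smoothly embedded sphere $S\subset R_n$. As $[S]$ is orthogonal to the $E_8$-configuration, the blow-down can be carried out away from the plumbed $E_8$-spheres, yielding a smooth negative-definite $4$-manifold $R_n'=(R_n\setminus\nu S)\cup_{S^3}B^4$ with $\partial R_n'=Y_n^-$, $b_2(R_n')=n+6$, $Q_{R_n'}\cong e^{\perp}\cong(-E_8)\oplus(n-2)\langle-1\rangle$, and still containing the $E_8$-configuration. From here I see two ways to close the argument. \textbf{(a)} Iterate: each $R_n^{(i)}$ again has intersection form $(-E_8)\oplus(n-1-i)\langle-1\rangle$, retains the $E_8$-configuration, and by the same parity argument its $(-1)$-classes are exactly the $\pm$generators of the surviving $\langle-1\rangle$-block; if one can push the $(-1)$-sphere through the whole chain (choosing the successive exceptional spheres mutually disjoint and disjoint from the $E_8$-part), then after $n-1$ blow-downs one obtains a negative-definite \emph{spin} bounding $W$ of $Y_n^-$ with $Q_W=-E_8$, i.e.\ $\frak{ds}(Y_n^-)=1<\infty$. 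For $n$ even this is already a contradiction, because $\overline{\mu}(Y_n^-)=0$ by \eqref{Ynmu}, so Ue's Theorem~\ref{Umain} (equivalently the Seifert inequality $\frak{ds}(Y)\le|\overline{\mu}(Y)|$ proved in Section~2) forces $\frak{ds}(Y_n^-)=0$, i.e.\ $[Y_n^-]=0$ in $\Theta^3_{\mathbb Z}$, which is absurd since $\mu(Y_n^-)=1$. \textbf{(b)} Uniformly in $n$: realize $R_n$ as a codimension-zero submanifold of a closed \emph{minimal} $4$-manifold with $b^+>1$ — for instance by resolving a projective surface carrying the singularity $x^2+y^3+z^{6n-1}$ (whose link is $Y_n^-$) and taking $R_n$ to be a regular neighborhood of the exceptional divisor — and invoke that a closed minimal $4$-manifold with $b^+>1$ contains no smoothly embedded $(-1)$-sphere (Seiberg--Witten theory); then $R_n$ contains none either.

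The main obstacle lies in completing either finish. In \textbf{(a)}, propagating a single $(-1)$-sphere through all $n-1$ blow-downs needs a delicate positioning argument, and — more seriously — for $n$ odd the conclusion $\frak{ds}(Y_n^-)=1$ is not in itself contradictory (indeed it holds, for small $n$, by Theorem~\ref{mainintro}), so finish \textbf{(a)} cannot be the whole story. Thus the essential point is finish \textbf{(b)}: one must actually construct the embedding of $R_n$ into a minimal ambient $4$-manifold (via the elliptic-fibration/handle model of the singularity, in the spirit of the $E(1)$-embeddings used later in the paper) and verify minimality of that ambient manifold. Once this is in place, the genus bound is immediate from adjunction, and the Proposition follows; producing and verifying that embedding is where the real work is.
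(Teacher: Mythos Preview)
Your lattice computation pinning down the square $-1$ classes is correct, but neither finish closes the argument, and the paper takes a completely different---and much shorter---route.

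Finish \textbf{(a)} fails, as you partly note. For odd $n$ the conclusion $\frak{ds}(Y_n^-)=1$ is not a contradiction at all (Theorem~\ref{mainintro} establishes exactly this for small $k$), so the parity half of the argument is vacuous. More fundamentally, the iteration is unjustified even for even $n$: the existence of one smoothly embedded $(-1)$-sphere in $R_n$ does not imply the existence of one in the blow-down $R_n'$. Your ``propagation'' step assumes each $R_n^{(i)}$ again contains a $(-1)$-sphere, but that is precisely the proposition applied to a smaller manifold---there is no induction here, only a restatement of the question. Finish \textbf{(b)} is a reasonable strategy in principle, but as you acknowledge, the entire content lies in producing the embedding into a closed minimal manifold with $b^+>1$ and verifying minimality; none of this is carried out, and it is not obviously easier than the proposition itself.

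The paper's proof avoids all of this with a single observation: every weight in the plumbing graph of Figure~\ref{236n-1} is $\le -2$, so by Gompf's criterion each $2$-handle can be attached along a Legendrian unknot with $tb=\text{framing}+1$ (Figure~\ref{stein}), and $R_n$ is a Stein domain. A Stein domain contains no smoothly embedded $(-1)$-sphere (this is the Lisca--Mati\'c adjunction inequality, ultimately Seiberg--Witten as in your plan \textbf{(b)}, but packaged as a standard black box). This settles all $n$ at once, with no parity split and no ambient closed manifold to construct.
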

\begin{proof}
Since by replacing any component in {\sc Figure}~\ref{236n-1} with a Legendrian knot as in {\sc Figure}~\ref{stein},
we can get a Stein surface on $R_n$.
On the other hand, any Stein structure does not contain any ($-1$)-sphere.
This means that $R_n$ can be never blow-downed any more.
\qed
\begin{figure}[htbp]
\begin{center}
\includegraphics{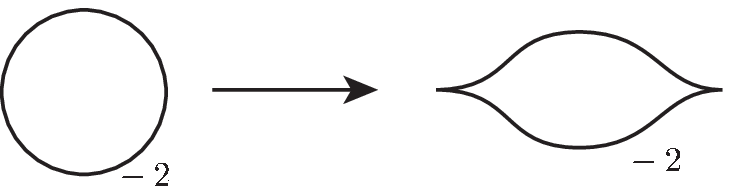}
\caption{A deformation into Stein structure.}
\label{stein}
\end{center}
\end{figure}
\end{proof}

$Y_{2k+1}^-$ has another spin bounding $S_k$ as in Figure~\ref{graph1} with the intersection form isomorphic to $-E_8\oplus \footnotesize{\begin{pmatrix}0&1\\1&0\end{pmatrix}}$.

The direct sum $-E_8\oplus H$ implies the existence of a homology 3-sphere $Y$ separating $-E_8$ and $H$.
The $H$-summand corresponds to a 4-manifold $X$ with $Q_X\cong H$ and $\partial X=Y$.
Can such a homology 3-sphere $Y$ be taken as one satisfying $[Y]=0$ in the homology cobordism group $\Theta_{\Bbb Z}^3$?
We post the following question, which is equivalent to $\frak{ds}(Y_{2k+1}^-)=1$ for any $k$.
\begin{figure}[htbp]
\begin{center}
{\unitlength 0.1in%
\begin{picture}( 25.0700,  5.1100)(  4.9500, -8.1100)%
%
\special{pn 4}%
\special{sh 1}%
\special{ar 757 489 16 16 0  6.28318530717959E+0000}%
\special{sh 1}%
\special{ar 1038 489 16 16 0  6.28318530717959E+0000}%
\special{sh 1}%
\special{ar 1319 489 16 16 0  6.28318530717959E+0000}%
\special{sh 1}%
\special{ar 1319 769 16 16 0  6.28318530717959E+0000}%
\special{sh 1}%
\special{ar 1599 489 16 16 0  6.28318530717959E+0000}%
\special{sh 1}%
\special{ar 1880 489 16 16 0  6.28318530717959E+0000}%
\special{sh 1}%
\special{ar 2160 489 16 16 0  6.28318530717959E+0000}%
\special{sh 1}%
\special{ar 2441 489 16 16 0  6.28318530717959E+0000}%
\special{sh 1}%
\special{ar 2722 489 16 16 0  6.28318530717959E+0000}%
\special{sh 1}%
\special{ar 3002 489 16 16 0  6.28318530717959E+0000}%
\special{sh 1}%
\special{ar 1599 489 16 16 0  6.28318530717959E+0000}%
%
\special{pn 4}%
\special{pa 757 489}%
\special{pa 3002 489}%
\special{fp}%
\special{pa 1319 489}%
\special{pa 1319 769}%
\special{fp}%
\put(7.1500,-4.4700){\makebox(0,0)[rb]{$-2$}}%
\put(9.9600,-4.4700){\makebox(0,0)[rb]{$-2$}}%
\put(12.7600,-4.4700){\makebox(0,0)[rb]{$-2$}}%
\put(15.5700,-4.4700){\makebox(0,0)[rb]{$-2$}}%
\put(18.3800,-4.4700){\makebox(0,0)[rb]{$-2$}}%
\put(21.1800,-4.4700){\makebox(0,0)[rb]{$-2$}}%
\put(23.9900,-4.4700){\makebox(0,0)[rb]{$-2$}}%
\put(26.8000,-4.4700){\makebox(0,0)[rb]{$-2$}}%
\put(12.7600,-8.1100){\makebox(0,0)[rt]{$-2$}}%
\put(29.9000,-4.3000){\makebox(0,0)[rb]{$2k$}}%
\end{picture}}%
\caption{The plumbing graph for $S_k$.}
\label{graph1}
\end{center}
\end{figure}
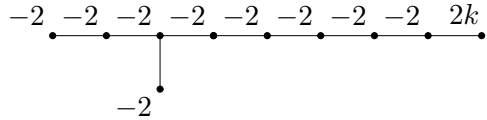
\begin{que}
\label{dividing}
Let $k$ be a positive number.
Can any homology 3-sphere $Y$ in $S_k$ separating the intersection form $Q_{S_k}=-E_8\oplus H$ (i.e., $\partial X=Y$ and $Q_X\cong H$) bound an acyclic 4-manifold?
\end{que}
\subsection{The embedding of $Y_{2k+1}^-$ in $E(1)$.}
Question~\ref{dividing} is unknown, although, we can give several negative $E_8$ boundings for $Y_{2k+1}^-$.

In the case of $n=0$, it is well-known that $Y_1^-=\Sigma(2,3,5)$ is the boundary of the $E_8$-plumbing.
In the case of $n=1$, since $Y_3^+=\Sigma(2,3,13)$ bounds a contractible 4-manifold, we give an h-cobordant
$$Y_3^-\approx Y_3^-\#(-Y_3^+)=M_3.$$
By use of Theorem~\ref{incmilnor}, we can give a negative $E_8$-bounding of $Y_3^-$ with $g_8=1$.
\begin{prop}
\label{main}
For some integer $k$ with $0\le k\le 12,14$, 
$E(1)$ can be decomposed along $Y_{2k+1}^-$ so that $E(1)=W_k\cup_{Y_{2k+1}^-}N_{2k+1}$.
Here $W_k$ is a simply-connected, $E_8$-bounding of $Y_{2k+1}^-$ with $g_8=1$ and $\epsilon=-1$.
\end{prop}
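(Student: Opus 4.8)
The plan is to produce, for each admissible $k$, an explicit handle picture of $E(1)$ in which a sub-handlebody diffeomorphic to the Gompf nucleus $N_{2k+1}$ is visible, the complementary piece $W_k$ then being read off as a negative-definite spin $E_8$-bounding of $\partial N_{2k+1}=\Sigma(2,3,12k+5)=Y_{2k+1}^-$. The case $k=0$ is the classical splitting $E(1)=N_1\cup_{\Sigma(2,3,5)}(\text{the $E_8$-plumbing})$, and the idea for general $k$ is to deform that picture by Kirby calculus while tracking the cusp fibre $F$ and the section $S$.

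First I would set up the standard handle diagram of $E(1)$ coming from its elliptic pencil: the cusp-fibre neighbourhood (a $0$-framed trefoil) together with a section disk and the $E_8$-plumbing, glued along the torus bundle $\partial N_1$. I would then apply a sequence of blow-ups, handle slides across the fibre torus, and blow-downs designed so that at the end the section sphere, originally of square $-1$ and class $[s]$, has become a sphere of square $-(2k+1)$ carrying the class $a_k=k[f]-[s]$, while the $E_8$-plumbing handles have been transported to the handles of another even, negative-definite, rank-$8$ plumbing; the sub-handlebody spanned by the (blown up and down) cusp neighbourhood and the reframed section disk is then recognised as $N_{2k+1}$. The point of this phrasing is that once the fibre maps to $[f]$ and the section to $a_k$, a purely homological computation closes the argument: $[f]$ and $a_k$ span a unimodular sublattice of $H_2(E(1))$ (Gram determinant $-1$), so $H_2(W_k)$ is its orthogonal complement, which has rank $8$, is negative-definite by a signature count, and is even because the canonical class $K_{E(1)}=-[f]$ already lies in $\langle [f],a_k\rangle$; hence $Q_{W_k}\cong -E_8$ and $W_k$ is spin. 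Simple connectivity of $W_k$ I would extract directly from the handle picture (no surviving $1$-handles), and $\partial W_k=\partial N_{2k+1}=Y_{2k+1}^-$ is then immediate.

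With $W_k$ in hand, the numerical conclusions are routine. Because $\bar\mu(Y_{2k+1}^-)=-1$ by~(\ref{Ynmu}), Ue's Theorem~\ref{Umain} forces $b_2=8$ on every negative-definite spin bounding of $Y_{2k+1}^-$; so $g_8(Y_{2k+1}^-)=\overline{g_8}(Y_{2k+1}^-)=1$, and $W_k$ being a negative-definite spin bounding with $b_2>0$ gives $\epsilon(Y_{2k+1}^-)=-1$. (Ue's congruence $b_2\equiv-8\bar\mu\pmod{16}$ can also serve as an independent check that the complement is the even form $-E_8$ rather than $\langle-1\rangle^{\oplus 8}$.)

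The hard part is the middle step: actually carrying out the Kirby moves and verifying that the end result is precisely $E(1)=W_k\cup_{Y_{2k+1}^-}N_{2k+1}$, and not some blow-up of it. I expect the restriction to $0\le k\le 12$ and $k=14$ to be exactly the range in which this goes through — presumably because at a key stage an attaching circle must be isotoped to a prescribed torus knot, or must bound an embedded disk expressible through the finitely many exceptional classes of $E(1)$, which succeeds only for those $k$ (with $k=13$ the apparent gap). I would also keep careful track of orientations throughout, so that the sign $\epsilon=-1$ and the form $-E_8$ (rather than $+E_8$) emerge with the stated signs.
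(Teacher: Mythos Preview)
Your overall strategy---start from the classical $E(1)=N_1\cup(\text{$E_8$-plumbing})$ and deform it by Kirby moves until the nucleus piece becomes $N_{2k+1}$---is exactly the paper's strategy, and your homological argument that the complement automatically carries $-E_8$ (orthogonal complement of the unimodular sublattice $\langle[f],a_k\rangle$, evenness from $K_{E(1)}=-[f]$) is a clean replacement for the paper's terser remark that ``the process preserves the intersection form of the complement.'' The invocation of Ue's theorem at the end is also how the paper handles the numerical claims.

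Where you diverge, and where the gap lies, is in the nature of the Kirby moves. You propose blow-ups and blow-downs and then worry about landing back in $E(1)$ rather than a blow-up of it. The paper avoids this entirely: it uses \emph{only handle slides} of the $(-1)$-framed section handle $\alpha$ over the seven $(-2)$-framed handles in the long arm of the $E_8$ chain. Each slide is either ``straight'' (framing unchanged) or ``twisted'' (framing drops by $4$), so iterating over the seven handles produces the framings $-1,-5,-9,\dots,-29$ (even $k$ up to $14$); the intermediate framings $-3,-7,\dots,-23$ (odd $k$ up to $11$) come from sliding over a sub-chain of length $0\le\ell\le5$ together with one further isolated $(-2)$-handle. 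This combinatorics is the entire content of the proof and is what pins down the range $0\le k\le12$, $k=14$: the value $k=13$ (framing $-27$) simply does not arise from these slides. Your proposal leaves this step as ``the hard part'' with only a speculative explanation for the excluded $k$; in the paper the range is not an accident of isotopy obstructions but a direct count of available $(-2)$-handles.

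So: keep your lattice argument for identifying $Q_{W_k}$, but replace the blow-up/blow-down scheme with pure handle slides over the $E_8$ chain. That both eliminates your concern about staying inside $E(1)$ and makes the restriction on $k$ transparent.
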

\begin{proof}
We start a well-known decomposition $E(1)=M(2,3,5)\cup N_1$, where $N_1$ is the nuclei, which is defined in \cite{GS}.
{\sc Figure}~\ref{E(1)decom} (Figure~16 in \cite{A}) is the handle diagram for the decomposition.
\begin{figure}[htbp]
\begin{center}
\includegraphics{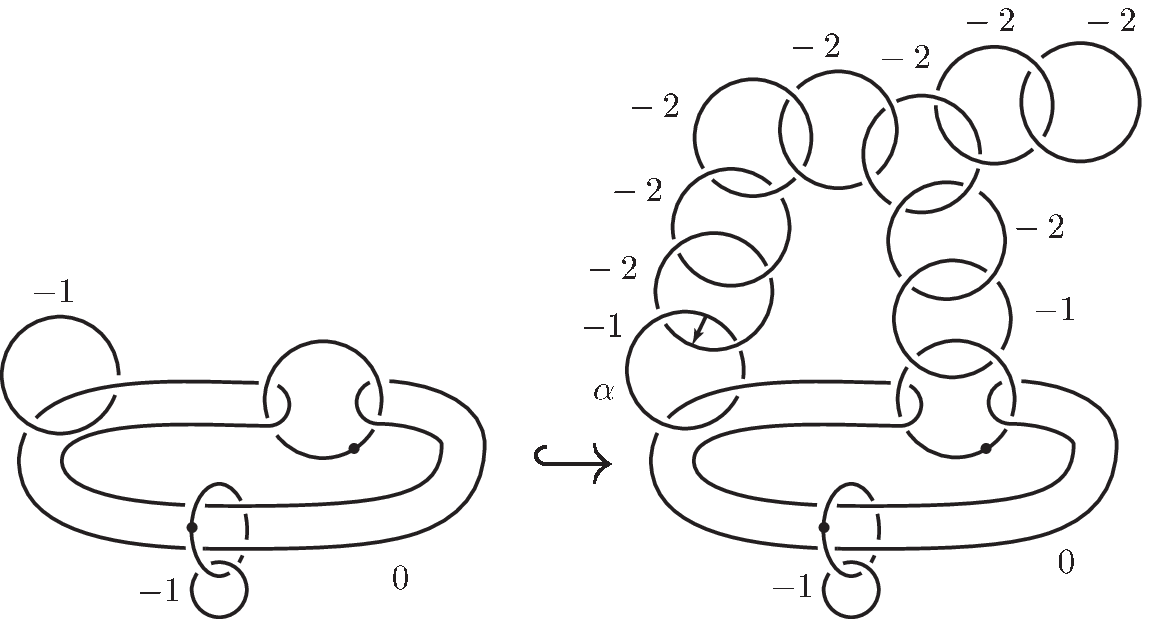}
\caption{Figure16 in \cite{A} and the embedding of $N_1$.}
\label{E(1)decom}
\end{center}
\end{figure}
In the following, we deform the decomposition into other ones via the following 2-handle slide of $\alpha$ in {\sc Figure}~\ref{handleslide}.
The handle slide by a straight band keeps the framing (the left picture in {\sc Figure}~\ref{handleslide}).
On the other hands, the handle slide by a twisting band (the right picture in {\sc Figure}~\ref{handleslide}) decreases the framing by $4$.
Therefore, the framings of $\alpha$ become $-1$ and $-5$ respectively.
\begin{figure}[htbp]
\begin{center}
\includegraphics{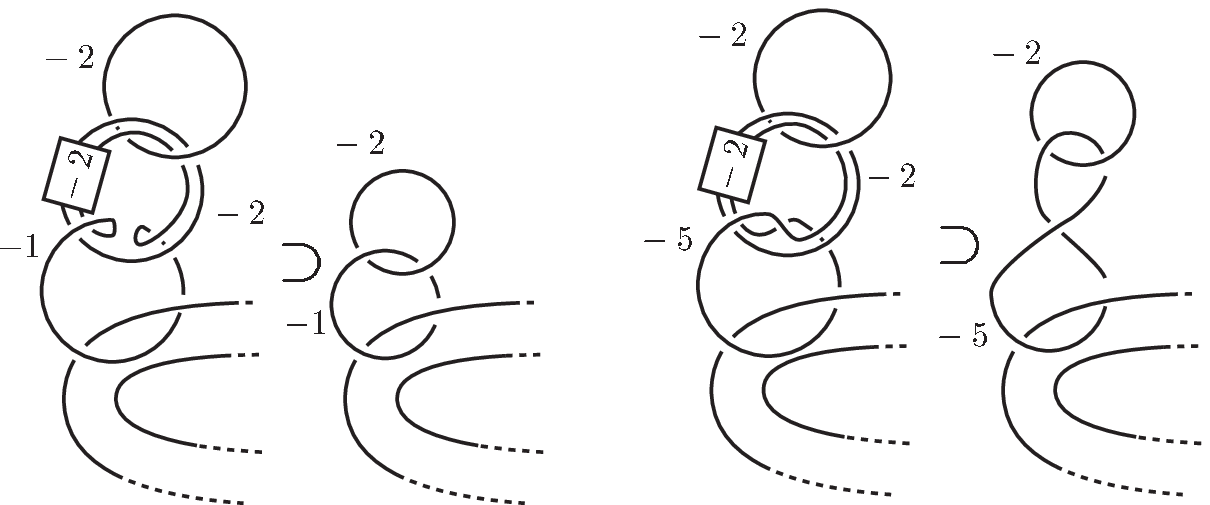}
\caption{The straight handle slide and twisting handle slide.}
\label{handleslide}
\end{center}
\end{figure}
We iterate this process to the linear $7$-component link connecting the $-2$-framed 2-handle except the $-2$-framed 2-handle adjacent to another $-1$-framed 2-handle.
We can realize 2-handle $\alpha$ with the framings of $-1,-5,-9,-13,-17,-21,-25$, and $-29$.
These attaching spheres are all unknots.
The 2-handles with framing $-3,-7,-11,-15,-19$, and $-23$ are obtained by sliding linear sub-$k$-chain ($0\le k\le 5$) and the unknot in the $7$-component link.
For example, {\sc Figure}~\ref{-7realization} realizes a $-7$-framed unknot by sliding $-5$-framed 2-handle to an unconnecting $-2$-framed 2-handle.

This process gives other decomposition $E(1)=W_k\cup_{Y_{2k+1}^-} N_{2k+1}$, where $k$ is $0\le k\le 12$ or $14$.
In fact $W_k$ is a 4-manifold with intersection form $-E_8$ and the boundary is $Y_{2k+1}^-$.
The process above preserves the intersection form of the complement.
As a result, $W_k$ is a simply-connected 4-manifold with intersection form $-E_8$ whose boundary is $Y_{2k+1}^-$ ($0\le k\le 12$ or $14$).
The complement is the nuclei $N_{2k+1}$.
See \cite{GS} for the definition of the nuclei.
\qed
\begin{figure}[htbp]
\begin{center}
\includegraphics{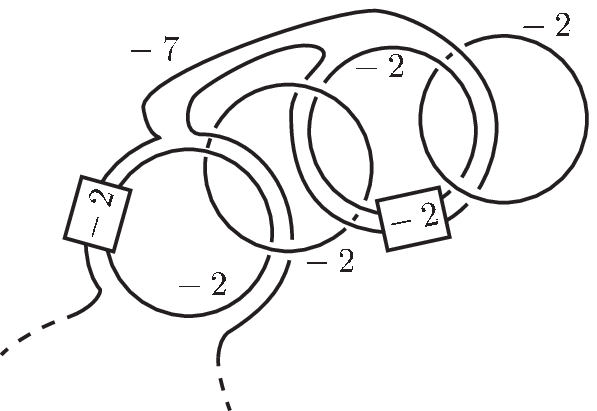}
\caption{A realization of $-7$-framed 2-handle.}
\label{-7realization}
\end{center}
\end{figure}
\end{proof}
{\bf Proof of Theorem~\ref{mainintro}.}
The 4-manifold $W_k$ is a negative $E_8$-bounding of $Y_{2k+1}^-$ for $0\le k\le 12$ or $14$.
Thus, for the integer $k$, we have $g_8(Y_{2k+1}^-)=1$ and $\epsilon(Y_{2k+1}^-)=-1$.
\qed
\medskip\\
There exists an h-cobordism $Y_{2k+1}^+\approx (-M_{2k+1})\#Y_{2k+1}^-$.
The homology 3-sphere $Y_{2k+1}^+\#(-Y_{2k+1}^-)=-M_{2k+1}$ has a `positive' $E_8$-bounding with $g_8=1$ by Theorem~\ref{incmilnor}.
Even if $Y_{2k+1}^-$ has a `negative' $E_8$-bounding with $g_8=1$, 
we do not know whether $Y_{2k+1}^+$ bounds a contractible 4-manifold or not.
In general, what condition for homology spheres $Y_1,Y_2$ with $\epsilon(Y_1)+\epsilon(Y_2)=0$ and $g_8(Y_i)=1$ can cancel out the intersection form $E_8\oplus (-E_8)$ into $\emptyset$?
We pose a more general question in the final section.
\section{The several sphere classes in $E(1)$.}

\begin{thm}
The classes $k[f]-[s]$ ($1\le k\le 13$ or $k=15$) in $H_\ast(E(1))$ are represented by embedded spheres,
where $f$ is the general fiber and $s$ is the section in the elliptic fibration.
\end{thm}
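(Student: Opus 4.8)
The plan is to exhibit the sphere inside the decomposition of Proposition~\ref{main}. For each $k$ on the list that proposition gives a splitting $E(1)=W_k\cup_{Y_{2k+1}^-}N_{2k+1}$ with $N_{2k+1}$ the Gompf nucleus, which contains an embedded section sphere $S$ with $S\cdot S=-(2k+1)$ together with the fibre torus $F$, where $F\cdot F=0$ and $F\cdot S=1$. Since the handle slides in the proof of Proposition~\ref{main} only move the section $2$-handle $\alpha$ and leave the cusp-fibre handle untouched, $[F]=[f]$ in $H_2(E(1))$. Thus everything reduces to identifying $[S]$, and once we know $[S]=\pm(k[f]-[s])$ we are finished: reversing the orientation of a surface preserves its genus, so $k[f]-[s]$ and $-(k[f]-[s])$ are represented by embedded spheres simultaneously.

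To identify $[S]$, I would follow the homology class of $\alpha$ through the straight and twisting slides of Proposition~\ref{main}. At the start $\alpha$ is the section sphere $s$ of $E(1)$ with framing $-1$, so $[\alpha]=[s]$ and $[\alpha]^2=-1$; at the end the framing is $-(2k+1)$, so $[\alpha]^2=-(2k+1)$. The slides push $\alpha$ over the $-2$-framed handles of the $E_8$-chain (and over linear sub-chains together with an auxiliary unknot). The essential point is that, although a single such slide moves $[\alpha]$ out of the span of $[f]$ and $[s]$, each prescribed sequence of slides changes $[\alpha]$ only by a multiple of the fibre class, so the end result is $[\alpha]=[s]-m[f]$ for some $m$; then $[\alpha]^2=-1+2m=-(2k+1)$ forces $m=k$, i.e.\ $[S]=[s]-k[f]=-(k[f]-[s])$. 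The orthogonal splitting $H_2(E(1))=H_2(W_k)\oplus H_2(N_{2k+1})$, which holds because the cutting locus is a homology $3$-sphere, is what makes this clean identification possible once the fibre-class bookkeeping is under control.

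The step I expect to be the main obstacle is exactly that bookkeeping: carrying $[\alpha]$ through the whole cascade of slides over the $E_8$-chain drawn in Figures~\ref{E(1)decom}--\ref{-7realization} and checking that all contributions coming from the $E_8$-summand cancel, leaving precisely $-k[f]$. A painless independent check, which also recovers the small values of $k$ with no handle calculus at all, is the Li--Li theorem quoted above: the self-intersection $-(2k+1)$ lies in the window $(-16,0)$ exactly when $2k+1<16$, so for $k\le 7$ the class $k[f]-[s]$ already has minimal genus $0$ in $E(1)={\Bbb C}P^2\#9\overline{{\Bbb C}P^2}$ for that reason alone, and the nucleus construction is needed only to reach the larger values on the list.
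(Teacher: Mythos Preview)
Your overall strategy coincides with the paper's: use the decomposition $E(1)=W_k\cup_{Y_{2k+1}^-}N_{2k+1}$ from Proposition~\ref{main}, observe that the core of the section $2$-handle in $N_{2k+1}$ is an embedded sphere $S$ with $[S]^2=-(2k+1)$, and then identify $[S]$ with $\pm(k[f]-[s])$. Your observation that $[F]=[f]$ (because the fibre handle is never slid) is correct and helpful.

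The gap is in the identification step. You expect that the net effect of the slides on $[\alpha]$ is to add a multiple of $[f]$, so that $[\alpha']=[s]-m[f]$ and the self-intersection forces $m=k$. But each slide of $\alpha$ over an $E_8$ $(-2)$-handle $h_i$ adds $\pm[h_i]$ to $[\alpha]$, and the original $E_8$-sublattice is \emph{orthogonal} to $\mathrm{span}([f],[s])$ in $H_2(E(1))$. Hence no nontrivial combination $\sum c_i[h_i]$ can equal a nonzero multiple of $[f]$; the ``cancellation leaving $-k[f]$'' you hope for cannot occur as described. What actually happens after the slides is that the new $W_k$ spans a \emph{different} copy of $-E_8$ inside $H_2(E(1))$, and $[\alpha']$ lies in the orthogonal complement of that new copy, not in the original $\mathrm{span}([f],[s])$.

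The paper sidesteps this entirely. Rather than tracking $[\alpha]$ through the slides, it simply uses the orthogonal splitting $Q_{E(1)}\cong(-E_8)\oplus\begin{pmatrix}-2k-1&1\\1&0\end{pmatrix}$ coming from the decomposition, and then chooses a diffeomorphism $E(1)\cong\mathbb{CP}^2\#9\overline{\mathbb{CP}^2}$ whose induced lattice isomorphism sends $\alpha$ to $-3k[h]+k\sum_{i=1}^{9}[e_i]+[e_9]=-k[f]+[s]$. In other words, the identification is made algebraically at the level of intersection forms (and realized by a diffeomorphism of the rational surface), not by following the handle moves. Your constraints $[F]=[f]$, $[F]\cdot[S]=1$, $[S]^2=-(2k+1)$ are compatible with this, but they are not enough on their own to force $[S]\in\mathrm{span}([f],[s])$; one still needs the lattice-isomorphism step that the paper invokes.
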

\begin{proof}
The decomposition $W_k\cup_{Y_{2k+1}} N_{2k+1}$ in Theorem~\ref{main} gives $Q_{E(1)}=Q_{W_k}\oplus Q_{N_{2k+1}}\cong -E_8\oplus \begin{pmatrix}-2k-1&1\\1&0\end{pmatrix}$.
Let $\alpha$ denote the same class as the one in Theorem~\ref{main}.
This class is the first class in the $N_{2k+1}$-part.

A diffeomorphism $E(1)\cong {\Bbb C}P^2\#^9\overline{{\Bbb C}P^2}$ induces 
an isomorphism $({\Bbb Z}^{10},Q)\cong ({\Bbb Z}^{10},\langle 1\rangle\oplus9\langle-1\rangle)$
and $\alpha$ is mapped to 
$$-3k\cdot[h]+k\sum_{i=1}^9[e_i]+[e_9]=-k[f]+[s],$$
where $\{[h],[e_i]|1\le i\le 9\}$ is the generator in $H_2({\Bbb C}P^2\#^9\overline{{\Bbb C}P^2})$.
The classes $[f]$ and $[s]$ correspond to the fiber and the section of $E(1)$ respectively.
In the case of $0\le k\le 12,14$, $\alpha$, that is, $-k[f]+[s]$ can be represented as a sphere.
\qed
\end{proof}
\section{Some questions and problems.}
Here we post several questions and problems.
\begin{que}
Let $Y$ be a homology 3-sphere.
\begin{enumerate}
\item When does $Y$ have a definite spin bounding?
\item If $\frak{ds}(Y)<\infty$, then does $Y$ have an $E_8$-bounding?
\item When the equality $m(-Y)/2=\frak{ds}(Y)$ or $\overline{m}(-Y)/2=\overline{\frak{ds}}(Y)$ hold?
\end{enumerate}
\end{que}

\begin{que}
Are there exist any homology 3-spheres $g_8(Y)< \overline{g_8}(Y)$, $\frak{ds}(Y)\neq g_8(Y)$ or $\overline{\frak{ds}}(Y)\neq \overline{g}_8(Y)$?
\end{que}
\begin{que}
Let $Y$ be a Brieskorn homology 3-sphere.
If $4d(Y)=-8\overline{\mu}(Y)>0$, then is $\frak{ds}(Y)=4d(Y)$ true?
\end{que}
\begin{que}
Let $Y$ be a Brieskorn homology 3-sphere with finite $E_8$-genus.
Then is $g_8(Y)=\overline{g_8}(Y)$ true?
\end{que}
We post a inequalities for bounding genus which are presumed by $\frac{11}{8}$-conjecture and Theorem~\ref{examnu}.
\begin{prob}
For positive integer $n$, we have
$$|\Sigma(8n-2,8n-1,16n-3)|,\ |\Sigma(8n-1,8n,16n-1)|\ge 3n$$
$$|\Sigma(8n-2,8n-1,32n^2-8n+1)|,\ |\Sigma(8n-1,8n,32n^2-1|\ge 3n.$$
\end{prob}
If one of this inequalities do not hold, then $\frac{11}{8}$-conjecture does not
hold.
\begin{que}
Do these homology 3-spheres above construct some $E_8$-boundings with $g_8=2n$?
\end{que}
\begin{que}
Let $a_k$ denote the 2nd homology class $k[f]-[s]$ in $E(n)$,
where $f$ is the general fiber and $s$ is a section.
Does there exists an upper bound of $k$ for
$a_k$ to be represented by an embedded $S^2$?
\end{que}
\begin{que}
For two homology 3-spheres with $\frak{ds}(X_i)<\infty$ ($i=1,2$),
Let denote $\tilde{\frak{ds}}(Y)=\epsilon(Y)\frak{ds}(Y)$.
Then when does the equality 
$$\tilde{\frak{ds}}(X_1)+\tilde{\frak{ds}}(X_2)=\tilde{\frak{ds}}(X_1\#X_2)$$
hold?
\end{que}
Although, in the case of $X_1=\Sigma(2,3,17)$ and $X_2=\Sigma(2,3,13)\#(-\Sigma(2,3,17))$,
the equality holds,
this equality seems unlikely, in general.
In order to satisfy this equality, some geometrically special condition would be occurred.

Finally, we post future's direction for this paper's topic.
\begin{prob}
Find more general constructions of positive (or negative) $E_8$-boundings for many homology 3-spheres.
\end{prob}

\noindent
Motoo Tange\\
University of Tsukuba, \\
Ibaraki 305-8502, Japan. \\
tange@math.tsukuba.ac.jp


\begin{thebibliography}{10}
\bibitem{A} S. Akbulut, {\it The Dolgachev surface}, Comm. Math. Helvetici, vol 87, issue 1 (2012) pp. 187-241
\bibitem{BL} C. Bohr and R. Lee, {\it Homology cobordism and classcical knot invariant}, Comment. Math. Helv., 77(2002), no.2, 363-382
\bibitem{D} S. K. Donaldson, {\it An application of gauge theory to four-dimensional topology}, Differential Geom. 18 (1983), no. 2, 279-315.
\bibitem{F} M. Furuta, {\it $11/8$-conjecture and monopole equation}, Mathematical Reserach Letters, Vol. 8, No 3, 2001, 279-292
\bibitem{Fk} Y. Fukumoto, M. Furuta, and M. Ue, {\it W-invariants and Neumann-Siebenmann invariants for
Seifert homology 3-spheres}, Topology and its Applications 116(2001) 333-389.
\bibitem{LL} B-H Li, and T-J Li, {\it Smooth minimal genera for small negative classes in ${\Bbb C}P^2\#n\overline{{\Bbb C}P}^2$ with $n\le 9$}, Topology and its Applications, 132, (2003), 1-15.
\bibitem{FM} S. Finashin and G. Mikhalkin, {\it A $(-86)$-Sphere in the K3-surface}, Turkish journal of Math, 21, (1997), 129-131.
\bibitem{GS} R. Gompf and A. Stipsicz, {\it 4-manifods and Kirby calculus}, Graduate Studies in Mathematics, 20. American Mathematical Society, Providence, RI, 1999.
\bibitem{K} M. Kneser, {\it Klassenzahlen definiter quadratischer Formen}, Arch. Math. 8 (1957), 241-250.
\bibitem{M} C. Manolescu, {\it On the intersection forms of spin four-manifolds with boundary}, arXiv:1305.4667
\bibitem{MY} Y. Matsumoto, and A. Yamada, {\it On the resolution diagrams of the Brieskorn singularities $(2,q,r)$  of type II}, Canad. J. Math. 35 (1983), no. 6, 1049-1058. 
\bibitem{N} A. N\'emethi, {\it On the Ozsv\'ath-Szab\'o invariant of negative definite plumbed 3-manifolds}, Geometry and Topology vol.9(2005) 991-1042
\bibitem{Ne} W. D. Neumann, {\it An invariant of plumbed homology spheres}, Topology Symposium, Siegen 1979 (Proc. Sympos., Univ. Siegen, Siegen, 1979), pp. 125-144, 
Lecture Notes in Math., 788, Springer, Berlin, 1980. 
\bibitem{NR} W.D. Raymond, and F. Raymond, {\it Seifert manifolds, plumbing, $\mu$-invariant and orientation reversing maps}, Algebraic and geometric topology (Proc. Sympos., Univ. California, Santa Barbara, Calif., 1977), pp. 163-196, 
Lecture Notes in Math., 664, Springer, Berlin, 1978. 
\bibitem{OS} P.Ozsv\'ath and Z. Szabo, {\it Absolutely graded Floer homologies and intersecetion forms for four-manifolds with boundary}, Advances in Math. 173(2003)179-261
\bibitem{U1} M.Ue, {\it The Fukumoto-Furuta and the Ozsvath-Szabo invariants for spherical 3-manifolds}, Algebraic topology?old and new, 121?139, Banach Center Publ., 85, Polish Acad. Sci. Inst. Math., Warsaw, 2009
\end{thebibliography}
\end{document}